\newcommand{\N}{\mathbb{N}}
\newcommand{\Z}{\mathbb{Z}}
\newcommand{\R}{\mathbb{R}}
\newcommand{\C}{\mathbb{C}}
\newcommand{\E}{\mathbb{E}}
\newcommand{\spr}[2]{\langle #1, #2 \rangle}
\newcommand{\HI}{H^\infty}
\newcommand{\Wa}{W^\alpha}
\newcommand{\Hor}{\mathcal{H}}
\newcommand{\Ha}{\Hor^\alpha}
\newcommand{\Hae}{\Hor^{\alpha + \epsilon}}
\DeclareMathOperator{\M}{M}
\newcommand{\mat}{{\text{mat-}\gamma}}
\DeclareMathOperator{\supp}{supp}
\DeclareMathOperator{\Id}{id}
\DeclareMathOperator{\Rad}{Rad}
\DeclareMathOperator{\Gauss}{Gauss}
\DeclareMathOperator{\vect}{span}
\DeclareMathOperator{\Prob}{Prob}
\let\Re=\relax \DeclareMathOperator{\Re}{Re}
\newcommand{\bignorm}[1]{\bigl\Vert#1\bigr\Vert}
\newcommand{\Bignorm}[1]{\Bigl\Vert#1\Bigr\Vert}
\newtheorem{thmalt}{Theorem}[section]
\theoremstyle{definition}
\newtheorem{rem}[thmalt]{Remark}
\newtheorem{defi}[thmalt]{Definition}
\newtheorem{thm}[thmalt]{Theorem}
\newtheorem{cor}[thmalt]{Corollary}
\newtheorem{lem}[thmalt]{Lemma}
\newtheorem{prop}[thmalt]{Proposition}
\newtheorem{assumption}[thmalt]{Assumption}
\numberwithin{equation}{section}
\title[H\"ormander type functional calculus and square function estimates]
 {H\"ormander type functional calculus and square function estimates} 
\author{Ch. Kriegler}
\address{Ch. Kriegler\\
Laboratoire de Math\'ematiques (CNRS UMR 6620)\\
Universit\'e Blaise-Pascal (Clermont-Ferrand 2)\\
Campus des C\'ezeaux\\
63177 Aubi\`ere Cedex\\
France
}
\email{christoph.kriegler@math.univ-bpclermont.fr}
\date{\today}
\subjclass[2010]{47A60, 47A80, 46J15, 42B15}
\keywords{Functional calculus, Square functions, H\"ormander spectral multipliers, Operator spaces}
\begin{document}

\begin{abstract}
We investigate H\"ormander spectral multiplier theorems as they hold on $X = L^p(\Omega),\: 1 < p < \infty,$ for many self-adjoint elliptic differential operators $A$ including the standard Laplacian on $\R^d.$
A strengthened matricial extension is considered, which coincides with a completely bounded map between operator spaces in the case that $X$ is a Hilbert space.
We show that the validity of the matricial H\"ormander theorem can be characterized in terms of square function estimates for imaginary powers $A^{it}$, for resolvents $R(\lambda,A),$ and for the analytic semigroup $\exp(-zA).$
We deduce H\"ormander spectral multiplier theorems for semigroups satisfying generalized Gaussian estimates.
\end{abstract}

\maketitle

\section{Introduction}\label{Sec 1 Intro}

Let $f$ be a bounded function on $(0,\infty)$ and $u(f)$ the operator on $L^p(\R^d)$ defined by $[u(f)g]\hat{\phantom{i}}(\xi) = f(|\xi|^2) \hat{g}(\xi).$
H\"ormander's theorem on Fourier multipliers \cite[Theorem 2.5]{Hor} asserts that $u(f) : L^p(\R^d) \to L^p(\R^d)$ is bounded for any $p \in (1,\infty)$ provided that for some integer $N$ strictly larger than $\frac{d}{2}$
\begin{equation}\label{Equ Hormander original}
\sup_{R > 0} \int_{R/2}^{2R} \left|t^k f^{(k)}(t)\right|^2 \frac{dt}{t} < \infty \quad \left(k = 0,1,\ldots,N\right).
\end{equation}

This theorem has many refinements and generalisations to various similar contexts.
For $\alpha > \frac12,$ let $\Wa_2(\R) = \{ f \in L^2(\R):\: \|f\|_{\Wa_2(\R)} = \| (1 + \xi^2)^{\alpha/2} \hat{f}(\xi)\|_{L^2(\R)} < \infty \}$ denote the usual Sobolev space, and $\Wa = \{  f : (0,\infty) \to \C:\: f \circ \exp \in \Wa_2(\R)\},$ which is a Banach algebra with respect to $\|f\|_{\Wa} = \|f \circ \exp\|_{\Wa_2(\R)}.$
Let $\phi_0 \in C^\infty_c(\frac12,2).$
For $n \in \Z,$ let $\phi_n = \phi_0(2^{-n} \cdot)$ and assume that $\sum_{n \in \Z} \phi_n(t) = 1$ for any $t > 0.$
Such a function exists \cite[Lemma 6.1.7]{BeL} and we call $(\phi_n)_{n \in \Z}$ a dyadic partition of unity.
We define the Banach algebra
\[ \Ha = \left\{ f : (0,\infty) \to \C :\: \| f\|_{\Ha} = \sup_{n \in \Z} \|\phi_n f\|_{\Wa} < \infty \right\}.\]
The definition of $\Ha$ is independent of the dyadic partition of unity, different choices resulting in equivalent norms \cite[Section 4.2]{Kr}.
The space $\Ha$ refines \eqref{Equ Hormander original}, more precisely, $f \in \Ha$ implies that $f$ satisfies \eqref{Equ Hormander original} for $N \leq \alpha,$ and the converse holds for $N \geq \alpha$ \cite[Proposition 4.11]{Kr}.

Now if $A$ is a self-adjoint positive operator on some $L^2(\Omega,\mu),$ then its functional calculus assigns to any bounded measurable function $f$ on $(0,\infty)$ an operator $f(A)$ on $L^2(\Omega,\mu).$
In particular, if $A = - \Delta$ and $(\Omega,\mu) = (\R^d,dx),$ then $f(A)$ equals the above $u(f).$
A  theorem of H\"ormander type holds true for many elliptic differential operators $A,$ including sublaplacians on Lie groups of polynomial growth, Schr\"odinger operators and elliptic operators on Riemannian manifolds \cite{DuOS,Alex,Bluna,Chri,Duon}.
By this, we mean that 
\begin{equation}\label{Equ Intro Hormander calculus}
u : \Ha \to B(X),\, f \mapsto f(A)\text{ is a bounded homomorphism,}
\end{equation}
where $X = L^p(\Omega),\,p \in (1,\infty),$ $\alpha$ is the differentiation parameter typically larger than $\frac{d}{2},$ where $d$ is the dimension of $\Omega,$ and $f(A)$ is
(the unique bounded $L^p$-extension of) the self-adjoint functional calculus.\\

The aim of this article is to characterize the validity of the H\"ormander multiplier theorem for $A$ in terms of square function estimates.

The latter have been introduced in Stein's classical book \cite{Ste1} and have since then been used widely with applications to functional calculi and multiplier theorems.
Note that $\|(\cdot)^{it}\|_{\Ha} \cong ( 1 + |t|^2)^{\alpha/2}$ \cite[Proposition 4.12 (4)]{Kr}, so that for this particular function, \eqref{Equ Intro Hormander calculus} implies $\|A^{it}\| \leq C ( 1 + |t|^2 )^{\alpha/2}.$
Then a natural square function estimate for our situation is
\begin{equation}\label{Equ Intro Square Function}
\| (1 + t^2)^{-\alpha/2} A^{it} x \|_{\gamma(\R,X)} \leq C \| x\|_X ,
\end{equation}
where $\gamma(\R,X)$ is given by
\[ \| x(t) \|_{\gamma(\R,X)} \cong \left\| \left( \int_\R \left|x(t)\right|^2 dt \right)^{\frac12} \right\|_X \]
for $X = L^p(\Omega,\mu)$ and $p \in [1,\infty),$ which explains the name square function.
The general definition of the space $\gamma(\R,X)$ involves Gaussian random sums in the Banach space $X,$ see Section \ref{Sec 2 Prelims}.

Our setting, developed in Section \ref{Sec 2 Prelims}, is as follows:
We let $X$ be a Banach space having Pisier's property $(\alpha),$ which a geometric property playing an important role for the theory of spectral multipliers.
It is natural to assume the operator $A$ to be $0$-sectorial
i.e. a negative generator of an analytic semigroup $(\exp(-zA))_{\Re z > 0}$ which is uniformly bounded on the sector $\Sigma_\omega = \{ z \in \C \backslash \{ 0 \} :\: | \arg z | < \omega \}$ for each $\omega < \frac{\pi}{2}.$
Indeed, $\exp(-z \cdot)$ belongs to $\Ha$ with uniform norm bound on such sectors.
Further, for simplicity we assume throughout that $A$ has dense range.

We shall base the definition of $u$ in \eqref{Equ Intro Hormander calculus} on the well-known $\HI$ functional calculus \cite{CDMY,KuWe}.
This means that for $f$ belonging to $\HI_0(\Sigma_\omega) = \{ f \in \HI(\Sigma_\omega) :\: \exists\: \epsilon,\, C > 0 \text{ s.th. }|f(z)| \leq C \min ( |z|^{\epsilon}, |z|^{-\epsilon} ) \}$ which is a subclass of
$\HI(\Sigma_\omega) = \{ f : \Sigma_\omega \to \C :\: f\text{ is analytic, }\|f\|_{\infty,\omega} = \sup_{z \in \Sigma_\omega} | f(z) | < \infty \},$ $f(A) \in B(X)$ is defined by a certain Cauchy integral formula, see \eqref{Equ Cauchy integral formula}.
Secondly, under certain conditions, $A$ has a bounded $\HI$ calculus, which means that
there is an extension to a bounded homomorphism $\HI(\Sigma_\omega) \to B(X),\,f \mapsto f(A).$
Note that $\HI(\Sigma_\omega)$ is a subclass of $\Ha.$
In Lemma \ref{Lem A u} it will be shown in particular that an extension of the $\HI$ calculus to a bounded homomorphism $u : \Ha \to B(X)$ is unique.

For any such mapping $u$ and $n\in \N,$ we now consider the linear tensor extension
\[ u_n : \begin{cases} M_n  \otimes \Ha & \to M_n \otimes B(X) \\ a \otimes f & \mapsto a \otimes u(f) \end{cases},\]
where $M_n$ is the space of $n \times n$ scalar matrices.
We will equip both $M_n \otimes \Ha$ and $M_n \otimes B(X)$ with suitable norms.
In fact, $\Ha$ will become an operator space (see Section 4), $M_n \otimes B(X) \cong B(\ell^2_n \otimes_2 X)$ if $X$ is a Hilbert space, 
and if $X$ is a Banach space, $M_n \otimes B(X) \cong B(\Gauss_n(X))$ carries the norm induced by an action on $X$-valued Gaussian random sums.
We call $u$ matricially $\gamma$-bounded in this article if
\begin{equation}\label{Equ Intro mat-gamma}
\|u\|_\mat = \sup_{n \in \N} \| u_n \| < \infty.
\end{equation}
This is in general strictly stronger than $\|u\| < \infty$ (see Proposition \ref{Prop bounded vs mat-gamma bounded}), and is related to the following two well-known boundedness notions, explained in Section \ref{Sec 2 Prelims}.
First, if $X$ is a Hilbert space, then \eqref{Equ Intro mat-gamma} is equivalent to the complete boundedness of $u,$ and 
second, if $X$ is a Banach space, then \eqref{Equ Intro mat-gamma} entails that the set of spectral multipliers $\{ u(f) : \: \|f\|_{\Ha} \leq 1\}$ is $\gamma$-bounded.

The main result reads as follows.

\begin{thm}\label{Thm Intro Main}
Let $X$ be a space with property $(\alpha).$
Let $A$ be a $0$-sectorial operator on $X$ with bounded $\HI$ calculus.
Let $\alpha > \frac12.$
Then the following are equivalent.
\begin{enumerate}
\item The square function estimate \eqref{Equ Intro Square Function} holds.
\item The $\HI$ calculus mapping $f\mapsto f(A)$ extends to a homomorphism $u : \Ha \to B(X)$ which is matricially $\gamma$-bounded.
\end{enumerate}
\end{thm}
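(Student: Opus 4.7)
The plan is to prove the two implications separately, bridging them via the Fourier-analytic representation of the imaginary powers. This representation rests on the substitution $f \mapsto f \circ \exp$, which identifies $\Wa$ with the classical Sobolev space $\Wa_2(\R)$, and on the elementary identity $m(A) = \frac{1}{\sqrt{2\pi}} \int_\R \widehat{m \circ \exp}(\tau) A^{i\tau} d\tau$ valid for nice $m$.

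For $(2) \Rightarrow (1)$, I would use the fact $\|(\cdot)^{it}\|_{\Ha} \cong (1+t^2)^{\alpha/2}$, so that the normalized family $g_t := (1+t^2)^{-\alpha/2}(\cdot)^{it}$ is uniformly bounded in $\Ha$. Applying matricial $\gamma$-boundedness of $u$ to diagonal matrices with entries $g_{t_1}, \ldots, g_{t_n}$ yields $\gamma$-boundedness of the operator family $\{(1+t^2)^{-\alpha/2} A^{it} : t \in \R\}$. The step from this discrete $\gamma$-bound to the continuous square function estimate $\|(1+t^2)^{-\alpha/2} A^{it} x\|_{\gamma(\R, X)} \leq C\|x\|_X$ I would carry out via Riemann-sum approximation of the $\gamma(\R, X)$-norm by discrete Gaussian sums, a passage legitimized by property $(\alpha)$, which supplies the two-sided Gaussian Fubini needed to convert discrete sums into the continuous $\gamma$-integral.

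For $(1) \Rightarrow (2)$, the main vehicle is the rewriting of the Fourier identity above as the pairing
\[ m(A)x \;=\; \int_\R \bigl[(1+\tau^2)^{\alpha/2} \widehat{m \circ \exp}(\tau)\bigr]\,\bigl[(1+\tau^2)^{-\alpha/2} A^{i\tau}x\bigr]\, d\tau \qquad (m \in \Wa) \]
between an $L^2(\R)$-function of norm $\|m\|_\Wa$ (by the very definition of the Sobolev space) and a $\gamma(\R, X)$-function of norm $\lesssim \|x\|_X$ by hypothesis (1). The standard $L^2(\R) \times \gamma(\R, X) \to X$ duality then yields $\|m(A)x\| \leq C\|m\|_\Wa\|x\|$. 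To upgrade from $\Wa$ to $\Ha$, I would decompose $f = \sum_n \phi_n f$ dyadically, apply the $\Wa$-bound to each $\phi_n f$ after rescaling $A \mapsto 2^{-n}A$ (under which the square function hypothesis is invariant because $(2^{-n} A)^{it} = 2^{-int}A^{it}$), and sum the pieces via a Paley--Littlewood-type $\gamma$-square function. Matricial $\gamma$-boundedness would then follow by running the same argument with $m$ replaced entrywise by a matrix $(m_{ij}) \in M_n(\Wa)$: since the integral representation factors through the Hilbert space $L^2(\R)$, the matricial amplification is absorbed into the $\gamma$-structure on $X$ without further cost.

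The main obstacle I anticipate is the passage from $\Wa$ to $\Ha$. The Fourier representation is intrinsically of Sobolev type, whereas $\Ha$ is Besov-like and requires a dyadic localization, so controlling the dyadic summation forces a Paley--Littlewood $\gamma$-square function for $A$ -- not the imaginary-power estimate (1) verbatim. Deriving this dyadic square function from (1) under property $(\alpha)$, while preserving not merely scalar but matricial $\gamma$-boundedness, is where the heart of the theorem lies; this is presumably achieved by an intermediate proposition establishing the equivalence of the imaginary-power, resolvent, semigroup, and dyadic square function estimates, and is the piece that makes the strengthened matricial formulation indispensable.
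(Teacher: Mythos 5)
Your plan for $(1)\Rightarrow(2)$ is broadly in line with the paper's strategy (Fourier representation $f(A)x = \frac{1}{2\pi}\int_\R \widehat{f\circ\exp}(\tau)A^{i\tau}x\,d\tau$, duality against the square function to bound the $\Wa$-calculus, Paley--Littlewood decomposition to pass to $\Ha$), and you correctly identify that the dyadic upgrade is the hard part and needs an intermediate spectral decomposition. However, you leave the mechanism for matricial amplification vague, and the matricial claim does not come for free from the Hilbertian factorization through $L^2(\R)$: the precise device is that $\Ha$ carries the \emph{row} operator space structure over $\Wa$, and the paper's Theorem~\ref{Thm Main 1} together with Proposition~\ref{Prop sigma(m,X)} (property $(\alpha)$ entering exactly there) is what turns the duality estimate into a matricial one. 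Without singling out the row structure the step ``the matricial amplification is absorbed into the $\gamma$-structure without further cost'' is not justified.

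The direction $(2)\Rightarrow(1)$ as you propose it has a genuine gap that cannot be repaired along the lines you describe. From matricial $\gamma$-boundedness applied to \emph{diagonal} matrices you only obtain $\gamma$-boundedness of the family $\{(1+t^2)^{-\alpha/2}A^{it}:t\in\R\}$; this is precisely the content of Remark~\ref{Rem mat}(1) and says nothing more than uniform-type bounds in the $\gamma$-sense. Such a $\gamma$-bound does not imply membership of $t\mapsto(1+t^2)^{-\alpha/2}A^{it}x$ in $\gamma(\R,X)$. The simplest counterexample: on a Hilbert space, $\gamma$-boundedness is ordinary uniform boundedness and $\gamma(\R,X)=L^2(\R,X)$, so the family $\{\Id_X\}$ is trivially $\gamma$-bounded while the constant function $t\mapsto x$ is never in $L^2(\R,X)$. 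No Riemann-sum or property $(\alpha)$ argument converts a uniform bound into an $L^2$-type decay. The actual argument in the paper is of a different nature: restrict $u$ to $\Wa$ (still matricially $\gamma$-bounded by Lemma~\ref{Lem Function spaces}(2)), identify $\Wa$ with $\ell^2_r$ via a unitary respecting the row structure, and then invoke Theorem~\ref{Thm Main 1}, whose $(2)\Rightarrow(1)$ implication hinges on testing against \emph{column} matrices $C_n\subset M_n$, not diagonal ones. This diagonal-versus-column distinction is exactly where matricial $\gamma$-boundedness is strictly stronger than $\gamma$-boundedness, and it is the engine that produces the square function estimate. Your proposal never touches the off-diagonal (column) matrices and so cannot reach the conclusion.
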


Theorem \ref{Thm Intro Main} entails a spectral multiplier theorem in the following situations:
The space $X = L^p(\Omega)$ for $p \in (1,\infty)$ has property $(\alpha).$
If $(\Omega,\mu)$ is a $d$-dimensional space of homogeneous type, e.g. a sufficiently regular open subset of $\R^d$ with Lebesgue measure $\mu,$ and $A$ is self-adjoint positive on $L^2(\Omega)$ such that the corresponding semigroup $\exp(-tA)$ has an integral kernel $k_t(x,y)$ that satisfies the Gaussian estimate for some $m \in \N$
\begin{equation}\label{Equ Intro GE}
|k_t(x,y)| \leq C \mu(B(x,t^{\frac1m}))^{-1} \exp\left( -c (\text{dist}(x,y)/t^{\frac1m})^{\frac{m}{m-1}} \right) \quad (x,y \in \Omega,\: t > 0),
\end{equation}
then $A$ has a bounded $\HI$ calculus on $X$
\cite[Theorem 3.4]{DR}, \cite[Corollary 2.3]{Blun}.
This is indeed the case for many operators listed before \eqref{Equ Intro Hormander calculus}
\cite[Section 2]{Bluna}.
Moreover, the mappings $u$ from \eqref{Equ Intro Hormander calculus} and Theorem \ref{Thm Intro Main} (2) are the same, so that we obtain as a corollary

\begin{cor}\label{Cor Intro}
Assume that $A$ is a self-adjoint positive operator on $L^2(\Omega)$ satisfying \eqref{Equ Intro GE}.
Let $\alpha > \frac12$ and $p \in (1,\infty).$
If $A$ satisfies the square function estimate
\begin{equation}\label{Equ Intro Cor}
\left\| \left( \int_\R \left|(1 + t^2 )^{-\alpha/2} A^{it} x \right|^2 dt \right)^{\frac12} \right\|_p \leq C \|x\|_p, 
\end{equation}
then for any $f \in \Ha,$ the spectral multiplier $f(A)$ is bounded $ L^p(\Omega) \to L^p(\Omega).$
\end{cor}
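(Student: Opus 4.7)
The plan is to apply Theorem~\ref{Thm Intro Main} directly to $X = L^p(\Omega)$, treating the corollary essentially as a specialization. The hypotheses of that theorem must first be checked in this concrete setting. The space $L^p(\Omega)$ for $p \in (1,\infty)$ has Pisier's property $(\alpha)$, a standard fact. The Gaussian bound \eqref{Equ Intro GE} together with the self-adjointness of $A$ on $L^2$ implies, via the Duong--Robinson and Blunck results cited just before the corollary, that $A$ extends to a $0$-sectorial operator on $L^p(\Omega)$ with a bounded $\HI$ calculus there.

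Next, the square function estimate \eqref{Equ Intro Cor} needs to be recast in the abstract $\gamma$-norm formulation \eqref{Equ Intro Square Function}. By the equivalence $\| y(t)\|_{\gamma(\R,L^p)} \cong \| ( \int_\R |y(t)|^2 dt )^{1/2} \|_p$ recalled in the introduction, applied to $y(t) = (1+t^2)^{-\alpha/2} A^{it} x$, the hypothesis of the corollary is exactly condition (1) of Theorem~\ref{Thm Intro Main}. Invoking the theorem then produces an extension $u : \Ha \to B(L^p(\Omega))$ of the $\HI$ calculus which is matricially $\gamma$-bounded; specializing \eqref{Equ Intro mat-gamma} to $n = 1$ immediately yields the norm bound $\|u(f)\|_{B(L^p)} \leq C\|f\|_{\Ha}$ for every $f \in \Ha$.

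The main remaining obstacle, and the step I would handle with most care, is to identify the abstractly constructed operator $u(f)$ with the spectral multiplier $f(A)$ defined through the self-adjoint functional calculus on $L^2$. Both $u$ and the (putative) mapping $f \mapsto f(A)$ from \eqref{Equ Intro Hormander calculus} are bounded homomorphisms $\Ha \to B(L^p(\Omega))$ extending the same $\HI(\Sigma_\omega)$ calculus given by the Cauchy integral formula. By the uniqueness statement of Lemma~\ref{Lem A u}, the two homomorphisms must therefore coincide on all of $\Ha$. Hence $f(A) = u(f)$ is bounded on $L^p(\Omega)$ with the announced norm bound, finishing the argument.
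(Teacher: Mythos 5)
The bulk of your argument is correct and matches the paper's: $L^p(\Omega)$ has property $(\alpha)$, the Gaussian estimate yields a bounded $\HI$ calculus on $L^p$ by the cited results of Duong--Robinson and Blunck, the square function equivalence for $L^p$ recasts \eqref{Equ Intro Cor} as condition (1) of Theorem~\ref{Thm Intro Main}, and the theorem then produces a bounded (indeed matricially $\gamma$-bounded) homomorphism $u : \Ha \to B(L^p(\Omega))$ extending the $\HI$ calculus.

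The identification step is circular as written, however. You invoke the uniqueness half of Lemma~\ref{Lem A u}~(2) on the grounds that both $u$ and the map $f \mapsto f(A)$ from \eqref{Equ Intro Hormander calculus} are bounded homomorphisms $\Ha \to B(L^p(\Omega))$. But that the $L^p$-extension of the self-adjoint calculus is a bounded homomorphism on all of $\Ha$ is precisely the conclusion of the corollary; if that were available in advance, Theorem~\ref{Thm Intro Main} and the square function hypothesis would be superfluous. The non-circular route is to show directly that $u(f)x = f(A)x$ for $x \in L^2 \cap L^p$, with $f(A)$ on the right the self-adjoint calculus on $L^2$: the two agree for $f \in \HI_0(\Sigma_\omega)$ because the Cauchy integral \eqref{Equ Cauchy integral formula} on $L^2$ and $L^p$ uses compatible resolvents; this propagates to $f \in \Wa$ by the density in Lemma~\ref{Lem Function spaces}~(3), taking norm limits simultaneously in $B(L^2)$ and $B(L^p)$; and then to $f \in \Ha$ by matching the Paley--Littlewood expansion $u(f)x = \sum_k u(f\phi_k)x$ from the proof of Lemma~\ref{Lem A u}~(2) against the strong limit $f(A)x = \lim_N \sum_{|k| \le N} (f\phi_k)(A)x$ in $L^2$, available by dominated convergence in the self-adjoint calculus. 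Once agreement on $L^2 \cap L^p$ is in place, $L^p$-boundedness of $u(f)$ is the asserted $L^p$-boundedness of the spectral multiplier. The paper itself is terse about this point (asserting the two mappings ``are the same''), so this is a gap of rigor rather than of strategy, but as phrased your argument does not close it.
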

In Proposition \ref{Prop bounded vs mat-gamma bounded}, we will show a partial converse of Corollary \ref{Cor Intro}.
More precisely, \eqref{Equ Intro Hormander calculus} implies that a restriction to a smaller H\"ormander space $\Hor^\beta$ is matricially $\gamma$-bounded.

Let us close the introduction with an overview of the rest of the article.
In Section \ref{Sec 2 Prelims}, we give the necessary background of the above mentioned notions of matricial norms, square functions, Gaussian random sums and functional calculus.
Matricially $\gamma$-bounded mappings and the connection to square functions are explained in Section \ref{Sec 3 Main Thm}.
Section \ref{Sec 4 Hormander operator space} is devoted to homomorphisms $u : \Ha \to B(X)$ and the connection to $\HI$ functional calculus.
Moreover Theorem \ref{Thm Intro Main} is proved.
A main ingredient is to deduce a spectral decomposition of Paley-Littlewood type, see \eqref{Equ Paley Littlewood}, under the hypotheses of Theorem \ref{Thm Intro Main}.
In Section \ref{Sec 5 Examples}, we discuss some extensions and applications.
Firstly, the square function estimate in terms of imaginary powers $A^{it}$ in Theorem \ref{Thm Intro Main} has several equivalent and almost equivalent rewritings in terms of other typical square functions, involving the analytic semigroup
\begin{align}
\left\| A^{\frac12} \exp(-te^{i\theta} A)x \right\|_{\gamma(\R_+,X)} & \leq C (\frac{\pi}{2} - |\theta| )^{-\beta} \|x\| & \quad \left(\theta \in (-{\pi}/{2},{\pi}/{2}) \right),
\label{Equ Intro Sgr Square Function} \\
\intertext{or resolvents}
\|A^{\frac12} R(e^{i\theta}t,A) x \|_{\gamma(\R_+,X)}  & \leq C |\theta|^{-\beta} \|x\| & \quad \left(\theta \in (-\pi,\pi) \backslash \{ 0 \} \right). \label{Equ Intro Res Square Function}
\end{align}
We have \eqref{Equ Intro Square Function} $\Rightarrow$ \eqref{Equ Intro Sgr Square Function} and \eqref{Equ Intro Res Square Function} for $\alpha \leq \beta,$ and conversely, \eqref{Equ Intro Sgr Square Function} or \eqref{Equ Intro Res Square Function} $\Rightarrow$ \eqref{Equ Intro Square Function} for $\alpha > \beta.$

Secondly, we discuss Theorem \ref{Thm Intro Main} in the presence of
generalized Gaussian estimates (see Assumption \ref{Ass Examples}), which in particular covers
semigroups satisfying \eqref{Equ Intro GE}.
This is a well-studied property in connection with (H\"ormander) functional calculus, see e.g. \cite{Duon,DuOS}.
In particular, we show the square function assumption of Corollary \ref{Cor Intro} in the form of \eqref{Equ Intro Sgr Square Function} and improve the derivation order of the H\"ormander theorem from $\alpha > \frac{d}{2} + \frac12$ as proved in \cite{Bluna} to $\alpha > d \left| \frac{1}{p_0} - \frac12 \right| + \frac12.$
We finally discuss the connections and differences between matricially $\gamma$-bounded H\"ormander calculus and bounded H\"ormander calculus.
The last Section \ref{Sec 6 Proofs Lemmas} contains some technical proofs of Section \ref{Sec 4 Hormander operator space}.

\section{Preliminaries on Operator spaces, Gaussian sums, Square functions and Functional calculus}\label{Sec 2 Prelims}

We will need in different contexts cross norms on a tensor product of two Banach spaces.

\subsection*{Operator spaces}

A Banach space $E$ is called operator space if it is isometrically embedded into $B(H),$ where $H$ is a Hilbert space.
Let $M_n$ denote the space of scalar $n \times n$ matrices.
What makes operator spaces different from mere Banach spaces is that there is a specific collection of norms on $M_n \otimes E$, the operator space structure of $E.$
Namely for all $n \in \N,$ it is equipped with the norm
arising from the embedding $M_n \otimes E \hookrightarrow B(\ell^2_n(H)),\, [a_{ij}] \otimes x \mapsto \left( (h_i)_{i=1}^n \mapsto (\sum_{j=1}^n a_{ij}x(h_j))_{i=1}^n \right).$

Let $E$ and $F$ be operator spaces and $u : E \to F$ a linear mapping.
For any $n \in \N,$ let $u_n$ be the linear mapping $M_n \otimes E \to M_n \otimes F,\, a \otimes x \mapsto a \otimes u(x).$
Then $u$ is called completely bounded (completely isometric) if $\|u\|_{cb} = \sup_{n \in \N} \|u_n\| < \infty$ (for any $n \in \N,\:u_n$ is isometric).

Clearly, any space $B(H)$ itself is an operator space, so in particular $M_m = B(\ell^2_m)$ is.
Further we will consider the Hilbert row space $\ell^2_r = \{h \mapsto \spr{h}{x}e :\:x \in \ell^2\} \subset B(\ell^2)$
where $e \in \ell^2$ is a fixed element of norm $1$ and $\spr{h}{x}$ is the scalar product.
Different choices of $e$ give isometric norms of $M_n \otimes \ell^2_r$ and $\ell^2_r$ is isometric to $\ell^2$ as a Banach space.
We shall also consider the $m$-dimensional subspaces $\ell^2_{m,r} \subset \ell^2_r.$
These are completely isometrically determined by the following embedding, which also explains the name of row space:

\begin{equation}\label{Equ i_m}
i_m : \ell^2_m \hookrightarrow M_m,\,(a_1,\ldots,a_m) \mapsto
\left(
  \begin{array}{ccc}
    a_1 & \ldots & a_m \\
    0 & \ldots & 0 \\
    \vdots & \ddots & \vdots \\
    0 & \ldots & 0 \\
  \end{array}
\right).
\end{equation}

We refer to the books \cite{ER,Pis2} for further information on operator spaces.

\subsection*{$\gamma$-bounded sets, property $(\alpha)$ and square functions}

We let $\Omega$ be a probability space and $(\gamma_k)_{k \in \Z}$ a sequence of independent standard Gaussian random variables on $\Omega.$
For a Banach space $X,$ we let $\Gauss(X)$ be the closure of $\vect\{ \gamma_k \otimes x_k :\: k \in \Z\}$ in $L^2(\Omega;X)$ with respect to the norm
\begin{equation}\label{Equ Def Gauss}
\bignorm{ \sum_k \gamma_k \otimes x_k }_{\Gauss(X)}  = \left( \int_\Omega \left\| \sum_{k=1}^n \gamma_k(\omega) x_k \right\|^2 d\omega \right)^{\frac12}.
\end{equation}
It will be convenient to denote $\Gauss_n(X)$ the subspace of $\Gauss(X)$ of elements of the form $\sum_{k=1}^n \gamma_k \otimes x_k.$

Note that if $X$ is a Hilbert space, then
\begin{equation}\label{Equ Hilbert Gaussian}
 \bignorm{ \sum_k \gamma_k \otimes x_k }_{\Gauss_n(X)}^2 = \sum_{k=1}^n \|x_k\|^2.
\end{equation}

A collection $\tau \subset B(X)$ is called $\gamma$-bounded if there exists $C > 0$ such that
\[ \left\| \sum_k \gamma_k \otimes T_k x_k \right\|_{\Gauss(X)} \leq C \left\| \sum_k \gamma_k \otimes x_k \right\|_{\Gauss(X)} \]

for any finite families $T_1,\ldots,T_n \in \tau$ and $x_1,\ldots, x_n \in X.$
The least admissible constant is denoted by $\gamma(\tau)$ (and $\gamma(\tau) := \infty$ if such a $C$ does not exist).
Note that a $\gamma$-bounded set is automatically uniformly norm bounded, since one has $\gamma(\tau) \geq \sup_{T \in \tau} \|T\|.$
For $\sigma,\tau \subset B(X)$ and $\sigma \circ \tau = \{S \circ T :\: S \in \sigma,\,T \in \tau\},$ one has $\gamma(\sigma\circ \tau) \leq \gamma(\sigma)\gamma(\tau).$
The set $\tau = \{ a \Id_X :\: a \in \C,\, |a| \leq 1\}$ is $\gamma$-bounded with constant 1.

We say that $X$ has property $(\alpha)$ if there is a constant $C \geq 1$ such that for any finite family $(x_{ij})$ in $X$, we have
\begin{equation}\label{Equ Property Gauss alpha}
\frac1C \bignorm{ \sum_{i,j} \gamma_{ij} \otimes x_{ij} }_{\Gauss(X)} \leq \bignorm{\sum_{i,j} \gamma_i \otimes \gamma_j \otimes x_{ij} }_{\Gauss(\Gauss(X))} \leq C \bignorm{ \sum_{i,j} \gamma_{ij} \otimes x_{ij} }_{\Gauss(X)},
\end{equation}
where $\gamma_{ij}$ is a doubly indexed family of independent standard Gaussian variables.
Property $(\alpha)$ is inherited by closed subspaces and isomorphic spaces.
The $L^p$ spaces have property $(\alpha)$ for $1 \leq p < \infty$ and moreover, if $X$ has property $(\alpha),$ then also $L^p(\Omega;X)$ has.
Property $(\alpha)$ is usually defined in terms of independent Rademacher variables $\epsilon_i,$ i.e. $\Prob(\epsilon_i = \pm 1) = \frac12$ instead of Gaussian variables \cite{Pis}.
In analogy with \eqref{Equ Def Gauss}, we define $\Rad(X) \subset L^2(\Omega;X)$ by
\[\bignorm{ \sum_k \epsilon_k \otimes x_k }_{\Rad(X)}  = \left( \int_\Omega \left\| \sum_{k=1}^n \epsilon_k(\omega) x_k \right\|^2 d\omega \right)^{\frac12}.\]
It turns out that the two definitions are the same:

\begin{lem}\label{Rem Property Gauss alpha}
The property \eqref{Equ Property Gauss alpha} is equivalent to the following equivalence uniform in finite families $(x_{ij})$ in $X.$
\begin{equation}\label{Equ Property Rad alpha}
 \bignorm{ \sum_{i,j} \epsilon_i \otimes \epsilon_j \otimes x_{ij} }_{\Rad(\Rad(X))} \cong \bignorm{ \sum_{i,j} \epsilon_{ij} x_{ij} }_{\Rad(X)}.
\end{equation}
\end{lem}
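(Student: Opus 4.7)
My plan is to prove the equivalence of \eqref{Equ Property Gauss alpha} and \eqref{Equ Property Rad alpha} by reformulating both as a common matrix-contraction principle and then translating between the Gaussian and Rademacher forms by a conditioning argument.

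First, I would show that each of the two equivalences is equivalent to a scalar contraction principle: for every finite scalar matrix $[\alpha_{ij}]$ with $|\alpha_{ij}| \leq 1$ and every $(x_{ij})$ in $X,$
\begin{equation*}
\Bignorm{\sum_{ij}\alpha_{ij}\rho_i\otimes\rho'_j\otimes x_{ij}} \leq C\Bignorm{\sum_{ij}\rho_i\otimes\rho'_j\otimes x_{ij}}
\end{equation*}
with $\rho \in \{\gamma,\epsilon\}.$ The reduction runs in both directions: the contraction obviously subsumes one inequality in \eqref{Equ Property Gauss alpha}/\eqref{Equ Property Rad alpha}, while conversely, choosing $\alpha_{ij}$ to be an independent Rademacher family $\tilde{\epsilon}_{ij}$ and applying Fubini converts $\sum_{ij}\alpha_{ij}\rho_i\rho'_j x_{ij}$ into the doubly-indexed sum $\sum_{ij}\tilde{\epsilon}_{ij}x_{ij}$ (scaled by $|\rho_i||\rho'_j|$), where one exploits the distributional identity $(\tilde{\epsilon}_{ij}\epsilon_i\epsilon'_j)\stackrel{d}{=}(\tilde{\epsilon}_{ij})$ in the Rademacher case and the rotational invariance of a Gaussian vector in the Gaussian case.

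Next, I would prove the easy direction, Rademacher contraction $\Rightarrow$ Gaussian contraction. Decomposing $\gamma_i = |\gamma_i|\epsilon_i$ and $\gamma'_j = |\gamma'_j|\epsilon'_j,$ with moduli independent of signs, for fixed values $a_i = |\gamma_i|,$ $b_j = |\gamma'_j|$ one applies the Rademacher contraction to the family $(a_ib_jx_{ij})$; since $[\alpha_{ij}]$ is still bounded by $1$ as a scalar matrix, the bound transfers to $\sum_{ij}\alpha_{ij}a_ib_j\epsilon_i\epsilon'_j x_{ij}.$ Squaring and integrating over the moduli yields the Gaussian contraction with the same constant.

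For the reverse direction Gaussian contraction $\Rightarrow$ Rademacher contraction I would combine the universal Jensen inequality $\|\cdot\|_{\Rad} \leq \sqrt{\pi/2}\|\cdot\|_{\Gauss},$ applied layer by layer, with the structural observation that property $(\alpha),$ in either form, forces $X$ to have finite cotype: this can be extracted by testing against a diagonal family $(x_{ij})_{ij} = (\delta_{ij}y_i)$ and a Maurey--Pisier-type argument. Finite cotype gives the two-sided Gauss/Rad comparison on $X$ and on $\Rad(X),$ and closing the circle delivers Rademacher $(\alpha)$ from Gaussian $(\alpha).$ This last direction is the main obstacle, since the inequality $\|\cdot\|_{\Gauss} \lesssim \|\cdot\|_{\Rad}$ is not universal; the whole point is to extract just enough cotype from the hypothesis to run the comparison only on the two specific families appearing in $(\alpha).$
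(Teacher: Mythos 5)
Your proposal shares the paper's crucial pivot — namely, that either form of property $(\alpha)$ forces $X$ to have finite cotype, and once one has finite cotype the Gauss/Rademacher comparison is two-sided and can be applied layer by layer — but it arrives there by a different route. The paper does not detour through a contraction reformulation at all: it cites Pisier's result that $S^p$ ($p \neq 2$) fails the Rademacher form of $(\alpha)$, converts this to a failure of the Gaussian form using the fact that $S^p$ already has finite cotype, then uses the finite representability of every Banach space in $\ell^\infty$ to conclude that $\ell^\infty$ fails both; by the Maurey--Pisier characterization of finite cotype, a space satisfying either form of $(\alpha)$ (which passes to subspaces with the same constant) cannot contain $\ell^\infty_n$ uniformly and hence has finite cotype. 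Your alternative — a direct computation on the diagonal family $x_{ij} = \delta_{ij} e_i$ inside a copy of $\ell^\infty_n$ — is in fact plausible and arguably more elementary, since $\mathbb{E}\max_i|\gamma_{ii}|^2 \cong \log n$ but $\mathbb{E}\max_i|\gamma_i\gamma_i'|^2 \cong (\log n)^2,$ so the two sides of \eqref{Equ Property Gauss alpha} genuinely diverge; combined with Maurey--Pisier this does yield finite cotype without invoking the $S^p$ result. Two caveats, though. First, the reformulation of both hypotheses as a scalar matrix contraction principle adds an extra layer that the argument does not need; moreover it is not immediate that the Gaussian contraction principle and the Gaussian property $(\alpha)$ are equivalent (your Fubini/sign-randomization works cleanly for Rademachers, but $\epsilon''_{ij}\gamma_i\gamma'_j$ is not jointly distributed like a Gaussian array, so the analogue on the Gaussian side requires more work than ``rotational invariance'' suggests). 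Second, the cotype-extraction step — the part you call ``the main obstacle'' — is only sketched; it is correct in outline but would need the $\log n$ vs.\ $(\log n)^2$ computation spelled out and the passage through Maurey--Pisier made explicit. With those details filled in, your proof would be correct and somewhat more self-contained than the paper's, at the cost of a less streamlined exposition.
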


\begin{proof}
First observe that the Schatten classes $S^p$ for $p \in (1,\infty) \backslash \{ 2 \}$ are spaces which do not satisfy \eqref{Equ Property Gauss alpha} nor \eqref{Equ Property Rad alpha}.
This is shown in \cite{Pis} for the Rademachers.
On the other hand, $S^p$ has finite cotype, which implies that on this space, Rademacher sums and Gaussian sums are equivalent \cite[Theorem 12.27]{DiJT}, i.e.
\begin{equation}\label{Equ Gauss Rad}
\|\sum_{k \in F} \gamma_k \otimes x_k\|_{\Gauss(X)} \cong \|\sum_{k \in F} \epsilon_k \otimes x_k\|_{\Rad(X)},
\end{equation}
uniformly in $F \subset \Z.$
From this one easily deduces that \eqref{Equ Property Gauss alpha} does not hold.

Next observe that by the Banach-Mazur theorem and \cite[Theorem 3.2]{DiJT}, $S^p$ (in fact any Banach space) has the property that all finite dimensional subspaces are isomorphic to a subspace of some $\ell^\infty_n,$
with one fixed isomorphism constant.
This implies that $\ell^\infty$ does not satisfy \eqref{Equ Property Gauss alpha} nor \eqref{Equ Property Rad alpha}.

Therefore, by the characterization of finite cotype in \cite[Theorem 14.1]{DiJT}, a space $X$ satisfying \eqref{Equ Property Gauss alpha} or \eqref{Equ Property Rad alpha} has finite cotype.
As cited above, Rademacher and Gaussian sums are then equivalent, so the corresponding expressions in \eqref{Equ Property Gauss alpha} and \eqref{Equ Property Rad alpha} are, which shows the lemma.
\end{proof}

We recall the construction of Gaussian function spaces from \cite{KaW2}, see also \cite[Section 1.3]{KNVW}.

Let $H$ be a separable Hilbert space.
We consider the tensor product $H \otimes X$ as a subspace of $B(H,X)$ in the usual way, i.e. by identifying
$\sum_{k=1}^n h_k \otimes x_k \in H \otimes X$ with the mapping $u : h \mapsto \sum_{k=1}^n \spr{h}{h_k} x_k$ for any finite
families $h_1,\ldots,h_n \in H$ and $x_1,\ldots,x_n \in X.$
Choose such families with corresponding $u$, where the $h_k$ shall be orthonormal.
Let $\gamma_1,\ldots,\gamma_n$ be independent standard Gaussian random variables over some probability space.
We equip $H \otimes X$ with the norm
\[ \bignorm{ u }_{\gamma(H,X)} = \bignorm{ \sum_k \gamma_k \otimes x_k }_{\Gauss(X)}.\]
By \cite[Corollary 12.17]{DiJT}, this expression is independent of the choice of the $h_k$ representing $u.$
We let $\gamma(H,X)$ be the completion of $H \otimes X$ in $B(H,X)$ with respect to that norm.
Then for $u \in \gamma(H,X),$ $\|u\|_{\gamma(H,X)} = \bignorm{\sum_k \gamma_k \otimes u(e_k)}_{\Gauss(X)},$ where the $e_k$ form an orthonormal basis of $H$ \cite[Remark 4.2]{KaW2}.\\

A particular subclass of $\gamma(H,X)$ will be important, which is obtained by the following procedure.
Assume that $(\Omega,\mu)$ is a $\sigma$-finite measure space and $H = L^2(\Omega).$
Denote $P_2(\Omega,X)$ the space of Bochner-measurable functions $f: \Omega \to X$ such that $x' \circ f \in L^2(\Omega)$ for all $x' \in X'.$
We identify $P_2(\Omega,X)$ with a subspace of $B(L^2(\Omega),X'')$ by assigning to $f$ the operator $u_f$ defined by
\begin{equation}\label{Equ u_f}
\spr{u_f h}{x'} = \int_\Omega \spr{f(t)}{x'} h(t) d\mu(t).
\end{equation}
An application of the uniform boundedness principle shows that, in fact, $u_f$ belongs to $B(L^2(\Omega),X)$ \cite[Section 4]{KaW2}, \cite[Section 5.5]{Frohl}.
Then we let
\[\gamma(\Omega,X) = \left\{ f \in P_2(\Omega,X):\: u_f \in \gamma(L^2(\Omega),X)\right\}\]
and set
\[\|f\|_{\gamma(\Omega,X)} = \|u_f\|_{\gamma(L^2(\Omega),X)}.\]
The space $\{u_f:\:f \in \gamma(\Omega,X)\}$ is a dense and in general proper subspace of $\gamma(L^2(\Omega),X).$
Resuming the above, we have the following embeddings of spaces, cf. also \cite[Section 3]{LM10}.
\[ L^2(\Omega)\otimes X \to \gamma(\Omega,X) \to \gamma(L^2(\Omega),X) \to B(L^2(\Omega),X).\]

In some cases, $\gamma(L^2(\Omega),X)$ and $\gamma(\Omega,X)$ can be identified with more classical spaces.
If $X$ is a Banach function space with finite cotype, e.g. an $L^p$ space for some $p \in [1,\infty),$ then for any
step function $f = \sum_{k=1}^n x_k \chi_{A_k}: \Omega \to X,$ where $x_k \in X$ and the $A_k$ are measurable and disjoint with $\mu(A_k) \in (0,\infty),$ we have
 (cf. \cite[Remark 3.6, Example 4.6]{KaW2})

\[\|f\|_{\gamma(\Omega,X)}  = \Bignorm{ \sum_k \gamma_k \otimes \mu(A_k)^{\frac12} x_k }_{\Gauss(X)} \!\!\!\!\!\!\!\!
\cong \Bignorm{ \left(\sum_k \mu(A_k) |x_k|^2 \right)^{\frac12}}_X \label{Equ classical square function}
= \Bignorm{ \left( \int_\Omega |f(t)(\cdot)|^2 d\mu(t) \right)^{\frac12} }_X.\]

The second equivalence follows from \cite[Theorem 16.18]{DiJT}.
The last expression above is a classical square function (see e.g. \cite[Section 6]{CDMY}),
whence for an arbitrary space $X,\,\|u\|_{\gamma(H,X)}$ is called (generalized) square function \cite[Section 4]{KaW2}.
In particular, if $X$ is a Hilbert space, then $\gamma(\Omega,X) = L^2(\Omega,X)$ with equal norms.

We have the following well-known properties of square functions.
\begin{lem}\label{Lem Folklore square functions}
Let $(\Omega,\mu)$ be a $\sigma$-finite measure space and $X$ a Banach space with property $(\alpha).$
\begin{enumerate}
\item Suppose that $f_n,f \in P_2(\Omega,X)$ and $f_n(t) \to f(t)$ for almost all $t \in \Omega.$
Then $\|f\|_{\gamma(\Omega,X)} \leq \liminf_n \|f_n\|_{\gamma(\Omega,X)}.$
\item Let $K \in B(H_2,H_1),$ where $H_1,H_2$ are separable Hilbert spaces.
Then for $u \in \gamma(H_1,X)$ we have $u \circ K \in \gamma(H_2,X)$ and $\|u \circ K\|_{\gamma(H_2,X)} \leq \|u\|_{\gamma(H_1,X)} \|K\|.$
\item If $\Omega \to B(X),\: t \mapsto N(t)$ is a strongly continuous map such that $\tau = \{ N(t) :\: t \in \Omega \}$ is $\gamma$-bounded,
and $f \in \gamma(\Omega,X),$ then $\|N \cdot f \|_{\gamma(\Omega,X)} \leq \gamma(\tau) \|f\|_{\gamma(\Omega,X)}.$
\end{enumerate}
\end{lem}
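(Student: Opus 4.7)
The plan is to prove the three items in the order (2), (1), (3). Part (2) is structural and rests on a dilation argument; (1) is a Fatou-type statement most cleanly obtained from a supremum characterization of the $\gamma$-norm that itself uses (2); and (3) is a discretization of both $N$ and $f$ that invokes (1).

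For (2), I normalize to $\|K\| \leq 1$ and invoke the standard dilation of a Hilbert space contraction: there exist $\tilde H \supset H_1$ and an isometry $V : H_2 \to \tilde H$ with $K = P_{H_1} V$, where $P_{H_1}$ is the orthogonal projection onto $H_1 \subset \tilde H$. Setting $\tilde u = u \circ P_{H_1} : \tilde H \to X$, one has $uK = \tilde u V$. Choose an orthonormal basis $(e_k)$ of $H_2$ and complete the orthonormal system $(V e_k)$ in $\tilde H$ with an orthonormal basis $(f_j)$ of $V(H_2)^\perp$. The Gaussian contraction principle, combined with the fact that $(Ve_k) \cup (f_j)$ is an orthonormal basis of $\tilde H$, then yields
\[
\Bignorm{\sum_k \gamma_k \otimes uK(e_k)}_{\Gauss(X)}^2 \leq \Bignorm{\sum_k \gamma_k \otimes \tilde u(Ve_k) + \sum_j \gamma_j' \otimes \tilde u(f_j)}_{\Gauss(X)}^2 = \|\tilde u\|_{\gamma(\tilde H,X)}^2 = \|u\|_{\gamma(H_1,X)}^2,
\]
the last equality holding because $\tilde u$ vanishes on $H_1^\perp \subset \tilde H$.

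An immediate consequence of (2), applied to finite-rank orthogonal projections on $L^2(\Omega)$, is the supremum characterization
\[
\|u_f\|_{\gamma(L^2(\Omega),X)} = \sup_{N,(h_j)} \Bignorm{\sum_{j=1}^N \gamma_j \otimes u_f(h_j)}_{\Gauss(X)},
\]
with the supremum running over finite orthonormal systems $(h_j)_{j=1}^N$ in $L^2(\Omega)$; by continuity the systems may be chosen to consist of bounded, compactly supported functions. Fix such a system and set $E = \bigcup_j \supp h_j$, of finite measure. To handle the absence of domination on $(f_n)$, I apply Egorov on $E$ to extract, for each $\delta > 0$, a measurable $E_\delta \subset E$ with $\mu(E \setminus E_\delta) < \delta$ on which $f_n \to f$ uniformly and in particular $\sup_n \sup_{t \in E_\delta} \|f_n(t)\| < \infty$. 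Dominated convergence then delivers $u_{1_{E_\delta} f_n}(h_j) \to u_{1_{E_\delta} f}(h_j)$ in norm, and scalar Fatou in the Gaussian integrals gives
\[
\Bignorm{\sum_j \gamma_j \otimes u_{1_{E_\delta} f}(h_j)}_{\Gauss(X)} \leq \liminf_n \|1_{E_\delta} f_n\|_{\gamma(\Omega,X)} \leq \liminf_n \|f_n\|_{\gamma(\Omega,X)},
\]
the last step by (2) applied to multiplication by $1_{E_\delta}$ on $L^2(\Omega)$. Letting $\delta \to 0$ (Fatou once more in $\Omega$) and then taking the supremum over $(h_j)$ yields (1).

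For (3), by (1) and density of step functions in $\gamma(\Omega,X)$ it suffices to treat $f = \sum_k x_k \chi_{A_k}$ with disjoint $A_k$ of finite positive measure. Using strong continuity of $t \mapsto N(t) x_k$, I refine the partition so that for chosen $t_k \in A_k$ one has $\sup_{s \in A_k} \|N(s) x_k - N(t_k) x_k\| < \epsilon$; then $Nf$ is pointwise close to the step function $g = \sum_k N(t_k) x_k \chi_{A_k}$, whose $\gamma$-norm is, by the formula displayed right after \eqref{Equ u_f},
\[
\|g\|_{\gamma(\Omega,X)} = \Bignorm{\sum_k \gamma_k \otimes \mu(A_k)^{1/2} N(t_k) x_k}_{\Gauss(X)} \leq \gamma(\tau) \Bignorm{\sum_k \gamma_k \otimes \mu(A_k)^{1/2} x_k}_{\Gauss(X)} = \gamma(\tau)\|f\|_{\gamma(\Omega,X)}
\]
by the very definition of $\gamma$-boundedness of $\tau = \{N(t) : t \in \Omega\}$. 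One more application of (1) as $\epsilon \to 0$ concludes.

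The main technical obstacle is (1): in the absence of any domination hypothesis on $(f_n)$, the Egorov truncation must be arranged so that the scalar integrals defining $u_{f_n}(h_j)$ converge in the limit simultaneously for all $h_j$ in a dense subset, and so that the pieces outside $E_\delta$ become negligible in $\gamma$-norm uniformly in $n$. Parts (2) and (3) are by contrast essentially formal: (2) reduces to the dilation theorem plus the Gaussian contraction principle, and (3) is a discretization of $N$ and $f$ that directly invokes the definition of $\gamma$-boundedness.
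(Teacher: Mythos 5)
The paper handles this lemma by citation alone: all three parts are folklore results from the Kalton--Weis theory of $\gamma$-radonifying operators, and the proof consists of references to \cite{KaW2} and \cite{vN}, prefaced by the remark that property $(\alpha)$ excludes a copy of $c_0$, which is what makes the Fatou statement (1) true. Your proposal is a genuinely different contribution: you give a self-contained proof from scratch, in the order (2), (1), (3). For (2) you use the Halmos dilation of a contraction to an isometry plus the contraction principle for adding independent Gaussian blocks; this is a clean and correct argument (an alternative, found in the cited sources, goes through the polar decomposition and the diagonal Gaussian contraction principle). For (1) you derive a supremum characterization of the $\gamma$-norm from (2) and then run an Egorov/dominated-convergence/Fatou argument; for (3) you reduce to step functions via (1) and discretize $N$ using strong continuity, invoking the definition of $\gamma$-boundedness on the resulting finite Gaussian sum. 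The overall architecture is sound and, by proving rather than citing, makes the logical dependence on (2) explicit.

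Three points deserve attention, none fatal. First, the ``supremum characterization'' $\|u_f\|_{\gamma(L^2(\Omega),X)} = \sup \,\|\sum_j \gamma_j \otimes u_f(h_j)\|$ as a two-sided identity requires that a finite supremum already force $u_f$ into the completion $\gamma(L^2(\Omega),X)$; this is exactly the Hoffmann-J\o rgensen--Kwapie\'n theorem and is where $c_0 \not\subset X$ enters. You never invoke this, though it holds under property $(\alpha)$; the paper's proof makes a point of flagging it, and yours should too, since otherwise you only bound the sup over finite subsystems, not $\|f\|_{\gamma(\Omega,X)}$ itself. Second, in the Egorov step you need to shrink $E_\delta$ further so that $f$ itself is bounded there (uniform convergence $f_n \to f$ alone does not bound $\sup_n\sup_t\|f_n(t)\|$ unless $f$ is bounded on the set), and the closing ``$\delta\to 0$'' step should be phrased as weak convergence $u_{1_{E_\delta}f}(h_j) \to u_f(h_j)$ plus weak lower semicontinuity (Fatou in the Gaussian variable and convexity of the squared norm), rather than ``Fatou in $\Omega$,'' since $fh_j$ need not be Bochner integrable on $E$. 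Third, in (3) you invoke (1) for $Nf$, but (1) requires $Nf \in P_2(\Omega,X)$ as a hypothesis, which is not handed to you; you have to establish it along the way (Bochner measurability of $Nf$ follows since $N$ is strongly measurable and $f$ Bochner measurable, and $\langle Nf,x'\rangle \in L^2$ follows by scalar Fatou from the uniform bound $\|Nf_n\|_\gamma \leq \gamma(\tau)\|f_n\|_\gamma$ on the approximating step functions), and you must also justify that step-function approximants to $f$ in $\gamma$-norm can be chosen to converge a.e.\ (e.g.\ via dyadic conditional expectations or by extracting a subsequence). These are standard repair jobs and the cited references carry them out, but they are genuine holes in the argument as written.
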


\begin{proof}
As $X$ has property $(\alpha),$ it does not contain $c_0$ isomorphically.
Using this fact, a proof of (1) can be found in \cite[Lemma 4.10]{KaW2}, or in \cite[Proposition 3.18]{vN}.
For (2), we refer to \cite[Proposition 4.3]{KaW2} or \cite[Corollary 6.3]{vN}.
Finally, (3) is proved in \cite[Proposition 4.11]{KaW2}, see also \cite[Theorem 5.2]{vN}.
\end{proof}

\subsection*{Sectorial operators and $\HI$ functional calculus}

Let $\theta \in (0,\pi)$ and $A :\: D(A) \subset X \to X$ a densely defined linear mapping on some Banach space $X.$
$A$ is called $\theta$-sectorial, if
\begin{enumerate}
\item The spectrum $\sigma(A)$ is contained in $\overline{\Sigma_\theta}.$
\item For all $\omega > \theta$ there is a $C_\omega > 0$ such that $\| \lambda (\lambda - A)^{-1} \| \leq C_\omega$ for all $\lambda \in \overline{\Sigma_\omega}^c.$
\item $R(A)$ is dense in $X.$
\end{enumerate}
We call $A$ $0$-sectorial if it is $\theta$-sectorial for all $\theta > 0.$
In the literature, property (3) is sometimes omitted.
It entails that $A$ is injective \cite[Proposition 15.2]{KuWe}.
For such an operator $A$ and $f \in \HI_0(\Sigma_\omega),\: \omega \in (\theta , \pi),$ one defines the operator

\begin{equation}\label{Equ Cauchy integral formula}
f(A) = \frac{1}{2\pi i} \int_{\partial \Sigma_{(\theta + \omega)/2}} f(\lambda) (\lambda - A)^{-1} d\lambda,
\end{equation}

where $\partial\Sigma_{(\theta + \omega)/2}$ is the sector boundary which is parametrized as usual counterclockwise.
It is easy to check that $f(A)$ is bounded and that $u : \HI_0(\Sigma_\omega) \to B(X)$ is a linear and multiplicative mapping.
Suppose that there exists $C > 0$ such that
\begin{equation}\label{Equ Bounded HI calculus}
\| f(A) \| \leq C \|f\|_{\infty,\omega} \quad (f \in \HI_0(\Sigma_\omega)).
\end{equation}
Then there exists an extension of $u$ to a bounded mapping $\HI(\Sigma_\omega) \to B(X),\,f \mapsto f(A),$ satisfying the so-called Convergence Lemma \cite[Lemma 2.1]{CDMY}.

\begin{lem}\label{Lem Convergence Lemma}
Let $(f_n)_{n \in \N}$ be a sequence in $\HI(\Sigma_\omega)$ such that $\sup_{n \in \N} \| f_n \|_{\infty,\omega} < \infty$ and $f_n(\lambda) \to f(\lambda)$ for all $\lambda \in \Sigma_\omega$ and some $f$
(which then necessarily belongs to $\HI(\Sigma_\omega)$).
Then $f(A)x = \lim_{n \to \infty} f_n(A) x$ for any $x \in X.$
\end{lem}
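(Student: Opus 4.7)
The plan is to reduce to the case of a null sequence and exploit the bounded $\HI$ calculus together with an auxiliary regularizer in $\HI_0(\Sigma_\omega)$. First I would note that by Vitali's (or Montel's) theorem, a locally uniformly bounded sequence of holomorphic functions that converges pointwise on $\Sigma_\omega$ converges locally uniformly, so the limit $f$ is automatically holomorphic and bounded, i.e.\ $f \in \HI(\Sigma_\omega)$. Setting $g_n = f_n - f$, I would reduce the problem to showing that $g_n(A)x \to 0$ for every $x \in X$, where $\sup_n \|g_n\|_{\infty,\omega} \le M < \infty$ and $g_n(\lambda) \to 0$ pointwise on $\Sigma_\omega$.

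Next, I would fix an auxiliary $\psi \in \HI_0(\Sigma_\omega)$, for instance $\psi(z) = z(1+z)^{-2}$, so that the product $g_n \psi$ again lies in $\HI_0(\Sigma_\omega)$ and can be represented by the Cauchy integral \eqref{Equ Cauchy integral formula}. For any $x \in X$, I would then write
\[
g_n(A)\psi(A)x = (g_n\psi)(A)x = \frac{1}{2\pi i}\int_{\partial\Sigma_{\omega'}} g_n(\lambda)\psi(\lambda)(\lambda - A)^{-1}x\, d\lambda
\]
for some $\theta < \omega' < \omega$, and apply dominated convergence to this scalar-valued vector integral: the integrand converges to $0$ pointwise by hypothesis, and is dominated by $M|\psi(\lambda)|\,\|(\lambda-A)^{-1}x\|$, where the sectorial estimate $\|\lambda(\lambda-A)^{-1}\| \le C_{\omega'}$ combined with $|\psi(\lambda)| \lesssim \min(|\lambda|,|\lambda|^{-1})$ along $\partial\Sigma_{\omega'}$ yields an integrable majorant. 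This produces $g_n(A)\psi(A)x \to 0$ for every $x$, i.e.\ $g_n(A)y \to 0$ for every $y$ in the range of $\psi(A)$.

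To pass from $R(\psi(A))$ to all of $X$, I would invoke that $R(\psi(A))$ is dense in $X$: with the standing assumptions that $A$ is $0$-sectorial with dense range (hence injective and with dense domain), the McIntosh-type approximations $t A (1+tA)^{-2}$ converge strongly to the identity on a dense subspace as $t$ varies, so $R(\psi(A))$ is dense. Combined with the uniform bound $\|g_n(A)\| \le C\sup_n \|g_n\|_{\infty,\omega} \le CM$ coming from the hypothesis of a bounded $\HI$ calculus, a standard three-epsilon argument then propagates the convergence $g_n(A)y \to 0$ from the dense subspace $R(\psi(A))$ to all $x \in X$.

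The only delicate point I anticipate is the contour estimate in the second paragraph: one needs to verify that the sector boundary $\partial\Sigma_{\omega'}$ can be chosen inside $\Sigma_\omega$ (so that $g_n$ remains defined and uniformly bounded there), and that the decay of $\psi$ at $0$ and $\infty$ on that contour is sufficient for integrability. The density statement used at the end is folklore for $0$-sectorial operators with dense range, so it can be quoted without proof.
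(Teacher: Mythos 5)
The paper does not prove this lemma; it cites it from \cite[Lemma~2.1]{CDMY}, and your argument is essentially the standard proof given there: reduce to a null sequence by Vitali/Montel, regularize against $\psi(z)=z(1+z)^{-2}$, invoke dominated convergence in the Cauchy integral for $(g_n\psi)(A)x$, and then upgrade via density of $R(\psi(A))$ together with the uniform bound $\sup_n\|g_n(A)\|<\infty$ supplied by the bounded $\HI$ calculus. The structure is correct and the contour estimate you flag works out: along $\partial\Sigma_{\omega'}$ with $\theta<\omega'<\omega$ one has $|\psi(\lambda)|\lesssim\min(|\lambda|,|\lambda|^{-1})$ and $\|(\lambda-A)^{-1}x\|\lesssim|\lambda|^{-1}\|x\|$, giving an $L^1(|d\lambda|)$ majorant.

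One small slip to fix: the operators $tA(1+tA)^{-2}=\psi(tA)$ do \emph{not} converge strongly to the identity, since $\|\psi\|_{\infty,\omega}\le\frac14$ forces $\sup_t\|\psi(tA)\|\le C/4$, so they stay uniformly away from $\Id_X$. The density of $R(\psi(A))$ is nonetheless correct and standard: $R(\psi(A))=D(A)\cap R(A)$, and one can exhibit an approximate unit such as $\phi_n(\lambda)=\dfrac{n}{n+\lambda}-\dfrac{1}{1+n\lambda}\in\HI_0(\Sigma_\omega)$, for which $\phi_n(A)x\to x$ for all $x$ (using dense domain and dense range, the latter giving $\frac1n(\frac1n+A)^{-1}x\to0$) while $\phi_n(A)x\in D(A)\cap R(A).$ With that substitution the proof is complete and matches the cited source.
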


Note that the extension is uniquely determined by Lemma \ref{Lem Convergence Lemma} since for any $f \in \HI(\Sigma_\omega),$
\begin{equation}\label{Equ Convergence lemma}
f_n(\lambda) = f(\lambda) \left( \lambda / (1 + \lambda)^2 \right)^{\frac1n}
\end{equation}
is a sequence in $\HI_0(\Sigma_\omega)$ approximating $f$ in the sense of that lemma.
As a consequence, if \eqref{Equ Bounded HI calculus} is satisfied, then it also holds for any $f \in \HI(\Sigma_\omega).$
In this case, we say that $A$ has a bounded $\HI(\Sigma_\omega)$ calculus, or without precising the angle $\omega \in (\theta,\pi),$ a bounded $\HI$ calculus.

\section{Square function estimate and matricially bounded homomorphism}\label{Sec 3 Main Thm}

Throughout the section, we let $X$ be a Banach space.
For any $n \in \N,$ we identify $M_n \otimes B(X)$ with $B(\Gauss_n(X))$ by associating $[a_{ij}] \otimes T \in M_n \otimes B(X)$ with the operator
\begin{equation}\label{Equ M_n otimes B(X)}
\sum_{k=1}^n \gamma_k \otimes x_k \mapsto \sum_{k,j=1}^n \gamma_k \otimes a_{kj} T(x_j).
\end{equation}
Via this identification, we get a norm on the tensor product space, which we note by $M_n \otimes_\gamma B(X).$

\begin{defi}\label{Def matricially gamma bounded}
Let $E$ be an operator space.
Let further $u : E \to B(X)$ be a linear mapping.
We call $u$ matricially $\gamma$-bounded, if $\Id_{M_n} \otimes u : M_n\otimes E \to M_n \otimes_\gamma B(X)$ is bounded uniformly in $n \in \N,$
i.e. if there is a constant $C > 0$ such that for any $n \in \N,$
\[ \bignorm{\sum_{i,j = 1}^n \gamma_i \otimes u(f_{ij}) x_j }_{\Gauss_n(X)} \leq C \bignorm{ \left[f_{i,j}\right]}_{M_n \otimes E} \bignorm{\sum_{i=1}^n \gamma_i \otimes x_i}_{\Gauss_n(X)} . \]
We denote the least admissible constant $C$ by $\|u\|_{\mat}.$
\end{defi}

\begin{rem}\label{Rem mat}~
\begin{enumerate}
\item If $u : E \to B(X)$ is matricially $\gamma$-bounded then
\begin{equation}\label{Equ u gamma bounded}
 \{ u(x) : \: \|x\|_E \leq 1\}
\end{equation}
is $\gamma$-bounded.
Indeed, from the definition of $\gamma$-boundedness in Section \ref{Sec 2 Prelims}, we immediately deduce that
\eqref{Equ u gamma bounded} is satisfied if and only if $\Id_{M_n} \otimes u|_{D_n \otimes E}$ is bounded uniformly in $n \in \N,$
where $D_n \subset M_n$ denotes the subspace of diagonal matrices.
We call a linear mapping $u$ $\gamma$-bounded if \eqref{Equ u gamma bounded} holds.
\item Assume that $X$ is a Hilbert space.
Then by \eqref{Equ Hilbert Gaussian}, $u$ is matricially $\gamma$-bounded if and only if $u$ is completely bounded, and in this case, $\|u\|_{cb} = \|u\|_\mat.$
\end{enumerate}
\end{rem}

A first example for Definition \ref{Def matricially gamma bounded} is given by
\begin{prop}\label{Prop sigma(m,X)}
For a given space $X$ and $m \in \N,$ consider
\[
\sigma_{m,X} : M_m \to M_m \otimes_\gamma B(X),\,[a_{ij}] \mapsto [a_{ij}\Id_X].
\]
Then $\sigma_{m,X}$ is matricially $\gamma$-bounded with $\sup_{m \in \N} \|\sigma_{m,X}\|_\mat < \infty$
if and only if $X$ has property $(\alpha).$
\end{prop}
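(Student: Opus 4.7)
My plan is to prove the two directions separately. $(\Leftarrow)$ reduces to a universal $M_N$-action on flat Gaussian sums via property $(\alpha)$, while $(\Rightarrow)$ applies the matricial hypothesis to unitary matrices $U$ and $U^{-1}$ and then averages over the unitary group.

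For $(\Leftarrow)$, I first establish the universal fact that for any Banach space $Y$ and any $N \in \N$, the map $M_N \to B(\Gauss_N(Y))$, $A \mapsto A\otimes\Id_Y$, is contractive. Writing $A = UDV$ via singular value decomposition, the unitary factors $U,V$ act isometrically on $\Gauss_N(Y)$ because $(\sum_k v_{kl}\gamma_k)_l$ is again an i.i.d.\ standard Gaussian family (rotation invariance of Gaussian measure), while the diagonal factor acts with norm $\|D\|_\infty = \|A\|$ by Kahane's contraction principle for Gaussians, valid in any Banach space. Given this and property $(\alpha)$ with constant $C_\alpha$, for $A = [a^{ij}_{kl}] \in M_n \otimes M_m \cong M_{nm}$ and $w = \sum_{j,l}\gamma_j\gamma_l'\otimes y_{(j,l)} \in \Gauss_n(\Gauss_m(X))$, the image $(\Id_{M_n}\otimes\sigma_{m,X})(A)w$ equals $\sum_{(i,k)}\gamma_i\gamma_k'(Ay)_{(i,k)}$, and I chain three inequalities: apply (2.2) to pass to the flat Gaussian norm $\|\sum_{(i,k)}\gamma_{(i,k)}(Ay)_{(i,k)}\|_{\Gauss_{nm}(X)}$; apply the universal fact at $N = nm$ to absorb $\|A\|_{M_{nm}}$; and apply (2.2) once more to return to the iterated norm of $w$. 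This yields $\sup_m\|\sigma_{m,X}\|_\mat \leq C_\alpha^2$.

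For $(\Rightarrow)$, let $C = \sup_m\|\sigma_{m,X}\|_\mat$ and fix $N\in\N$. The homomorphism property $\sigma_{N,X}(AB) = \sigma_{N,X}(A)\sigma_{N,X}(B)$ carries over to the matricial ampliation, so applying the hypothesis to a unitary $U \in U(N^2) \subset M_{N^2}$ (which has $\|U\|_{M_{N^2}} = 1$) and to $U^{-1}$ gives the two-sided bound
\[
C^{-1}\|w\|_{\Gauss_N(\Gauss_N(X))} \leq \bignorm{(\Id_{M_N}\otimes\sigma_{N,X})(U)w}_{\Gauss_N(\Gauss_N(X))} \leq C\|w\|_{\Gauss_N(\Gauss_N(X))}
\]
for every $w = \sum_{j,l}\gamma_j\gamma_l'\otimes y_{(j,l)}$. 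Writing $\eta = (\gamma_i\gamma_k')_{(i,k)} \in \C^{N^2}$, a direct computation gives $(\Id_{M_N}\otimes\sigma_{N,X})(U)w = \sum_{(j,l)}(U^T\eta)_{(j,l)}y_{(j,l)}$. Averaging over Haar-uniform $U\in U(N^2)$ and using that $U^T\eta$ decomposes as $\|\eta\|_2V$ with $V$ uniform on the unit sphere of $\C^{N^2}$ independent of $\eta$, combined with the analogous polar decomposition $g = \|g\|_2V$ of a standard Gaussian $g\in\C^{N^2}$ (which yields $E_V\|\sum_\alpha V_\alpha y_\alpha\|_X^2 = \|\sum_\alpha\gamma_\alpha y_\alpha\|_{\Gauss_{N^2}(X)}^2/N^2$), together with $E\|\eta\|_2^2 = N^2$, one obtains the exact identity
\[
E_U\bignorm{(\Id_{M_N}\otimes\sigma_{N,X})(U)w}_{\Gauss_N(\Gauss_N(X))}^2 = \Bignorm{\sum_\alpha\gamma_\alpha y_\alpha}_{\Gauss_{N^2}(X)}^2.
\]
Combined with the two-sided unitary bound, this gives $C^{-2}\|w\|_{\Gauss_N(\Gauss_N(X))}^2 \leq \|\sum_\alpha\gamma_\alpha y_\alpha\|_{\Gauss_{N^2}(X)}^2 \leq C^2\|w\|_{\Gauss_N(\Gauss_N(X))}^2$, which is precisely (2.2), i.e.\ property $(\alpha)$, with constant $C$.

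The main technical step — and the conceptual heart of the proof — is the Haar-averaging identity in $(\Rightarrow)$, which exactly converts the averaged squared iterated-Gaussian norm of $(\Id_{M_N}\otimes\sigma_{N,X})(U)w$ into the squared flat Gaussian norm of the underlying $(y_\alpha)$. This exactness is not a coincidence: both the Haar measure on $U(N^2)$ and the standard Gaussian measure on $\C^{N^2}$ decompose into independent radial and directional parts, with the same uniform distribution on the sphere governing the direction; the hypothesis then pins the iterated norm between constants by allowing us to apply it to $U$ and $U^{-1}$ symmetrically.
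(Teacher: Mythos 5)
Your overall strategy is sound and broadly similar in spirit to what the paper needs, but there are two concrete technical errors, one in each direction.

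In the $(\Leftarrow)$ direction, the ``universal fact'' you state — that $A\mapsto A\otimes\Id_Y$ is contractive from $M_N$ to $B(\Gauss_N(Y))$ — is \emph{true}, but your justification for the unitary case is not. The Gaussian variables $\gamma_k$ in the definition of $\Gauss(X)$ are \emph{real} standard Gaussians, and a complex unitary $U\in M_N(\C)$ does not send the real Gaussian vector $(\gamma_1,\dots,\gamma_N)$ to an identically distributed vector; e.g.\ for $U=\frac{1}{\sqrt2}\begin{pmatrix}1&i\\ i&1\end{pmatrix}$ the resulting coordinates are deterministically related ($\tilde\gamma_2=i\overline{\tilde\gamma_1}$), not i.i.d.\ real Gaussians. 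So ``rotation invariance of Gaussian measure'' only justifies the case of real orthogonal $U$, whereas SVD over $\C$ produces complex unitaries. The correct justification is the ideal property of $\gamma$-radonifying operators (Lemma~\ref{Lem Folklore square functions}~(2) / \cite[Cor.~12.17]{DiJT}): $\|u\circ K\|_{\gamma}\le\|u\|_{\gamma}\|K\|$, applied with $K$ the transpose of $A$ acting on $\ell^2_N$. Once that substitution is made, your three-inequality chain is fine.

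In the $(\Rightarrow)$ direction, the ``exact identity'' from Haar averaging is false as stated. Conditioning on $\eta$, you correctly get $E_U\bignorm{\sum_\alpha(U^T\eta)_\alpha y_\alpha}^2=\|\eta\|_2^2\,E_V\bignorm{\sum_\alpha V_\alpha y_\alpha}^2$ with $V$ uniform on the \emph{complex} sphere of $\C^{N^2}$, and $E\|\eta\|_2^2=N^2$. But the sphere average that arises from the polar decomposition $g=\|g\|_2 V$ is the polar decomposition of a \emph{complex} standard Gaussian $g$, so the unconditioned Haar average equals $E\bignorm{\sum_\alpha g_\alpha y_\alpha}^2$ with $g$ complex circularly-symmetric Gaussian — \emph{not} $\bignorm{\sum_\alpha\gamma_\alpha y_\alpha}_{\Gauss_{N^2}(X)}^2$, which is a \emph{real} Gaussian average. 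These two quantities differ in general even in complex Banach spaces: already for $X=\C^2$ with the $\ell^1$ norm one computes $2+4/\pi$ versus $2+\pi/2$. They are universally $\sqrt2$-equivalent (decompose $g_k=(\gamma_k+i\gamma'_k)/\sqrt2$ and use convexity together with the Gaussian contraction principle), which is enough to conclude property $(\alpha)$ with constant $\sqrt2\,C$, but the ``exactness'' that you single out as the conceptual heart of the argument is simply not there. If you rephrase the final step as a two-sided comparison between the Haar average and the real Gaussian average with universal constants, the proof of $(\Rightarrow)$ goes through.

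For comparison, the paper does not give a self-contained proof but points to \cite[Lemma~4.3]{KrLM} for the Rademacher analogue and asserts the same argument applies; the Rademacher setting avoids the complex/real Gaussian subtlety entirely since Rademachers interact cleanly with diagonal matrices and duality, which is why the Haar-averaging trick is not needed there. Your version is an interesting alternative, but both flaws need to be fixed before it constitutes a valid proof.
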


It is shown in \cite[Lemma 4.3]{KrLM} that the $\sigma_{m,X}$ are $\gamma$-bounded uniformly in $m \in \N$ if and only if $X$ has property $(\alpha)$ (with Rademachers in place of Gaussians).
Actually the same proof applies to Proposition \ref{Prop sigma(m,X)}.

Mappings which are $\gamma$-bounded or matricially $\gamma$-bounded have been studied so far in connection with functional calculi and unconditional decompositions \cite{KrLM,dPR} where $E$ is a $C(K)$-space
and representations of amenable groups \cite{LM10}, where $E$ is a nuclear $C^*$-algebra.
We shall focus in this section on the row Hilbert space $E = \ell^2_r.$

\begin{thm}\label{Thm Main 1}
Let $u : \ell^2 \to B(X)$ be a bounded linear mapping.
Assume that $X$ has Pisier's property $(\alpha).$
For $n \in \N,$ denote by $C_n \subset M_n$ the subspace of matrices vanishing outside the first column.
Then the following conditions are equivalent:
\begin{enumerate}
\item $\bignorm{u(\cdot)x}_{\gamma(\ell^2,X)} \leq C \| x \|.$
\item $u : \ell^2_r \to B(X)$ is matricially $\gamma$-bounded.
\item The restriction $\Id \otimes u: C_n \otimes \ell^2_r \to M_n \otimes_\gamma B(X)$ is bounded uniformly in $n \in \N.$
\end{enumerate}
\end{thm}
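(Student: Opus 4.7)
The plan is to prove the cycle $(2) \Rightarrow (3) \Rightarrow (1) \Rightarrow (2)$; the first two implications amount to unpacking the definitions of the relevant norms, while $(1) \Rightarrow (2)$ is the main content and is where property $(\alpha)$ is used essentially.

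The implication $(2) \Rightarrow (3)$ is immediate since $C_n \otimes \ell^2_r$ inherits its norm from $M_n \otimes \ell^2_r$. For $(3) \Rightarrow (1)$ I would first unpack both norms: via the embedding $\ell^2_r \hookrightarrow B(\ell^2)$, an element $\sum_i e_{i1} \otimes x_i \in C_n \otimes \ell^2_r$ has norm $\sup_{\|h\|_{\ell^2} \leq 1} (\sum_i |\langle h, x_i\rangle|^2)^{1/2}$; via the identification $M_n \otimes_\gamma B(X) = B(\Gauss_n(X))$, its image $\sum_i e_{i1} \otimes u(x_i)$ has norm $\sup_{\|x\| \leq 1} \|\sum_i \gamma_i \otimes u(x_i) x\|_{\Gauss_n(X)}$, since a first-column matrix acts only via the first coordinate $x_1$ of an input tensor $\sum_l \gamma_l \otimes x_l$, and $\|x_1\| \leq \|\sum_l \gamma_l \otimes x_l\|_{\Gauss_n(X)}$ by a conditional expectation argument. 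Specializing (3) to an orthonormal family $(h_k)_{k=1}^n$ makes the right-hand side equal to $C\|x\|$; passing to the supremum over $n$ gives (1).

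For the main implication $(1) \Rightarrow (2)$, let $[x_{ij}]$ have $M_n \otimes \ell^2_r$-norm $N$ and let $Y = \sum_j \gamma_j \otimes y_j \in \Gauss_n(X)$. I would introduce two auxiliary operators: $K : \ell^2_n \to \ell^2_n(\ell^2)$ defined by $(a_i) \mapsto (\sum_i a_i x_{ij})_j$, which has norm $N$ (as the adjoint of the operator $(h_j) \mapsto (\sum_j \langle h_j, x_{ij}\rangle)_i$ whose norm realizes the $M_n \otimes \ell^2_r$-norm), and $W : \ell^2_n(\ell^2) \to X$ defined by $(h_j) \mapsto \sum_j u(h_j) y_j$. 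The elementary identity
\[
\sum_{i,j} \gamma_i \otimes u(x_{ij}) y_j = \sum_i \gamma_i \otimes (W \circ K)(e_i)
\]
then shows that the left-hand side of (2) has $\Gauss_n(X)$-norm equal to $\|W \circ K\|_{\gamma(\ell^2_n, X)}$, which by the ideal property of the $\gamma$-norm (Lemma \ref{Lem Folklore square functions}(2)) is at most $\|W\|_{\gamma(\ell^2_n(\ell^2), X)} \cdot N$. Using the orthonormal basis $\{e_j \otimes e_k\}$ of $\ell^2_n(\ell^2)$, $\|W\|_{\gamma(\ell^2_n(\ell^2), X)}$ equals $\|\sum_{j,k} \gamma_{(j,k)} \otimes u(e_k) y_j\|_{\Gauss(X)}$; by property $(\alpha)$ this is equivalent to $\|\sum_j \gamma_j \otimes L(y_j)\|_{\Gauss(\Gauss(X))}$, where $L : X \to \Gauss(X)$, $y \mapsto \sum_k \gamma'_k \otimes u(e_k) y$. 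Hypothesis (1) gives $\|L\| \leq C$, and since the tensor extension $\Id_\Gauss \otimes L$ has norm $\|L\|$, the previous expression is bounded by $C \|Y\|_{\Gauss_n(X)}$, yielding (2) with a constant proportional to the product of $C$ and the property-$(\alpha)$ constant.

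The hard part is $(1) \Rightarrow (2)$: a naive summation column by column loses a factor of $\sqrt{n}$, and it is property $(\alpha)$, combined with the precise factorization through $W$ and $K$, that allows one to trade the single-indexed Gaussian sum on the left-hand side of (2) for the doubly-indexed sum $\sum_{j,k} \gamma_j \gamma'_k \otimes u(e_k) y_j$, in which hypothesis (1) can be applied directly to the action of $u$ on an orthonormal basis of $\ell^2$.
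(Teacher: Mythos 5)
Your proof is correct, and for the key implication $(1) \Rightarrow (2)$ it takes a genuinely different route from the paper. The paper fixes $m$ and factorizes the \emph{operator} $u_{n,m}(f) = W_n\bigl[\pi_m(f_{ij})\bigr]V_n$, where $V : x \mapsto \sum_{k \leq m}\gamma_k \otimes T_k x$ carries hypothesis (1), $W$ is the first-coordinate projection, and $\pi_m = \sigma_{m,X}\circ i_m$; property $(\alpha)$ then enters through Proposition \ref{Prop sigma(m,X)}, which makes the $\pi_m$ uniformly matricially $\gamma$-bounded. You instead fix the \emph{vector} $Y = \sum_j \gamma_j \otimes y_j$, rewrite $\sum_{i,j}\gamma_i\otimes u(x_{ij})y_j$ as $\sum_i \gamma_i \otimes (W\circ K)(e_i)$, and observe that its $\Gauss_n(X)$-norm is $\|W\circ K\|_{\gamma(\ell^2_n,X)}$; the ideal property of $\gamma$-spaces (Lemma \ref{Lem Folklore square functions}(2)) then splits off $\|K\| = \|[x_{ij}]\|_{M_n\otimes\ell^2_r}$ (the row structure is used here exactly where the paper uses $i_m$), and reduces everything to the single estimate $\|W\|_{\gamma(\ell^2_n(\ell^2),X)}\lesssim\|Y\|_{\Gauss_n(X)}$, which is where property $(\alpha)$ enters in the direct form \eqref{Equ Property Gauss alpha} together with the tensorization $\|\Id_{\Gauss}\otimes L\| = \|L\|$. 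The two arguments are logically close (Proposition \ref{Prop sigma(m,X)} is itself proved via \eqref{Equ Property Gauss alpha}), but your decomposition short-circuits the detour through $\sigma_{m,X}$ and the matricial calculus, trading three uniformly-bounded factors for one application of the $\gamma$-ideal property; the paper's route, in exchange, keeps the three roles (hypothesis (1), the row embedding, and property $(\alpha)$) in separately quantified factors, which makes the dependence of $\|u\|_\mat$ on each ingredient more transparent. The implications $(2)\Rightarrow(3)$ and $(3)\Rightarrow(1)$ in your proposal coincide with the paper's argument.
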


\begin{proof}
We fix an orthonormal basis $(e_k)_k$ of $\ell^2.$
Write $T_k = u(e_k).$
Then condition (1) of the statement rewrites
\begin{equation}\label{Equ 1 Thm}
\bignorm{ u(\cdot)x }_{\gamma} = \sup_{n \in \N} \bignorm{ \sum_{k=1}^n \gamma_k \otimes T_k x }_{\Gauss_n(X)} \leq C \| x \|.
\end{equation}
On the other hand, for $[f_{ij}] \in M_n \otimes \ell^2,$ we have with $f^{(k)}_{ij} = \spr{f_{ij}}{e_k},$
\[\bignorm{(\Id_{M_n}\otimes u)[f_{ij}]}_{M_n \otimes_\gamma B(X)} = \lim_m \bignorm{ \left[ \sum_{k=1}^m f^{(k)}_{ij} T_k \right] }_{M_n \otimes_\gamma B(X)}.\]
Thus, condition (2) is equivalent to
\begin{equation}\label{Equ 2 Thm}
\bignorm{ \sum_{i=1}^n \sum_{k=1}^m \gamma_i \otimes f_{ij}^{(k)} T_k x_j }_{\Gauss_n(X)} \leq C \|[f_{ij}]\|_{M_n \otimes \ell^2_{m,r}} \bignorm{ \sum_{i=1}^n \gamma_i \otimes x_i }_{\Gauss_n(X)} ,
\end{equation}
where $f_{ij} = (f_{ij}^{(k)})_k \in \ell^2_m,$ and $C$ is independent of $n$ and $m.$
Denote the linear bounded mapping $\Gauss_n(X) \to \Gauss_n(X)$ arising from \eqref{Equ 2 Thm} by $u_{n,m}([f_{ij}]).$
Finally, condition (3) is equivalent to \eqref{Equ 2 Thm} with $f_{ij} = 0$ for $j \geq 2.$\\

\noindent
(1) $\Longrightarrow$ (2)

For $m \in \N$ fixed, let $Y = \Gauss_m(X)$ and define the operators
\[ V : X \to Y, \, x\mapsto \sum_{k=1}^m \gamma_k \otimes T_k x,\quad W : Y \to X,\, \sum_{k=1}^m \gamma_k \otimes x_k \mapsto x_1.\]
By assumption \eqref{Equ 1 Thm}, $V$ is bounded with constant $C$ independent of $m.$
Further, $W$ is bounded (see e.g. \cite[(2.13)]{Kr} for a simple proof).
For $n \in \N,$ denote
\[V_n = \Id_{\ell^2_n} \otimes V : \Gauss_n(X) \to \Gauss_n(Y),\, \sum_{k=1}^n \gamma_k \otimes x_k \mapsto \sum_{k=1}^n \gamma_k \otimes V(x_k).\]
It is easy to check that $\|V_n\| = \|V\|.$
Similarly, defining $W_n = \Id_{\ell^2_n} \otimes W : \Gauss_n(Y) \to \Gauss_n(X),$ one has $\|W_n\| = \|W\|.$
Let $i_m : \ell^2_{m,r} \to M_m$ be the first row identification as in \eqref{Equ i_m} which is completely bounded of $cb$-norm 1.
Then by Remark \ref{Rem mat} and Proposition \ref{Prop sigma(m,X)}, along with property $(\alpha),$ $\pi_m = \sigma_{m,X} \circ i_m : \ell^2_{m,r} \to B(M_m \otimes X)$ is a matricially $\gamma$-bounded mapping
and $\sup_{m \in \N} \|\pi_m\|_\mat < \infty.$
For $f = [f_{ij}]_{ij} \in M_n \otimes \ell^2_{m,r},$ we have the identity
\[ u_{n,m}(f) = \left[\sum_{k=1}^m f_{ij}^{(k)} T_k \right] = [W \pi_m(f_{ij}) V] = W_n [\pi_m(f_{ij})] V_n.\]
Therefore,
\[\|u_{n,m}(f)\| \leq \|W_n\| \, \|V_n\| \, \| [\pi_m(f_{ij})] \| \leq \|W\| \, \|V\| \, \|\pi_m\|_\mat \|[f_{ij}]\|_{M_n \otimes \ell^2_{m,r}},\]
so \eqref{Equ 2 Thm} follows.\\

\noindent
(2) $\Longrightarrow$ (3)

This is clear, since (3) is an obvious restriction of (2).\\

\noindent
(3) $\Longrightarrow$ (1)

Choose $n = m \in \N$ and $f = [f_{ij}] \in M_n \otimes \ell^2_{n,r}$ with $f_{ij} = \delta_{j1} e_i,$ where $(e_i)$ is the standard basis of $\ell^2_n.$
By definition of the row norm, we have
\begin{align*}
\|f\|_{M_n \otimes \ell^2_{n,r}} & = \bignorm{ \left[ \sum_k \spr{f_{ik}}{f_{jk}} \right] }^{\frac12}_{M_n} \\
& = \bignorm{ \left[ \sum_k \delta_{k1} \spr{e_i}{e_j} \right] }_{M_n}^{\frac12} \\
& = \bignorm{ [\delta_{ij}] }_{M_n}^{\frac12} \\
& = 1.
\end{align*}
As $f$ is supported by the first column, by assumption \eqref{Equ 2 Thm} there is some $C < \infty$ such that
\begin{align*}
C & \geq \|u_{n,n}(f)\|_{M_n \otimes_\gamma B(X)} \\
& = \bignorm{ \left(
                                                         \begin{array}{cccc}
                                                           T_1 & 0 & \ldots & 0 \\
                                                           \vdots & \vdots & \ddots & \vdots \\
                                                           T_n & 0 & \ldots & 0 \\
                                                         \end{array}
                                                       \right)}_{M_n \otimes_\gamma B(X)}
\\
& = \sup \left\{ \bignorm{ \sum_k \gamma_k \otimes T_k x_1 }_{\Gauss_n(X)} :\: \bignorm{ \sum_k \gamma_k \otimes x_k }_{\Gauss_n(X)} \leq 1 \right\} \\
& = \sup \left\{ \bignorm{ \sum_k \gamma_k \otimes T_k x   }_{\Gauss_n(X)} :\: \|x\| \leq 1 \right\}.
\end{align*}
Letting $n \to \infty$ shows that \eqref{Equ 1 Thm} holds.
\end{proof}

\begin{rem}
Theorem \ref{Thm Main 1} is a generalization of \cite[Proposition 3.3]{LM04}, where $X$ is an $L^p$-space, and \cite[Corollary 3.19]{HaKu}, where more generally $X$ has property $(\alpha).$
There it is shown that condition (1) of the theorem implies that $u : \ell^2 \to B(X)$ is $\gamma$-bounded.
(In these two references, $u$ maps to $B(Y,X)$ instead of $B(X).$
A corresponding version of Theorem \ref{Thm Main 1} with $B(Y,X)$ in place of $B(X)$ holds with the same proof).
\end{rem}

\section{The H\"ormander functional calculus}\label{Sec 4 Hormander operator space}

Recall the spaces $\Wa$ and $\Ha$ and the dyadic partition of unity $(\phi_n)_{n \in \Z}$ from the introduction.
Clearly the space $\Wa$ is a Hilbert space.
We equip $\Ha$ with an operator space structure by putting
\begin{equation}\label{Equ Ha OSS}
\|[f_{ij}]\|_{M_n \otimes \Ha} = \sup_{k \in \Z} \| [\phi_k f_{ij}] \|_{M_n \otimes \Wa_r},
\end{equation}
where the index $r$ refers to the row space structure.
It is easy to check that \eqref{Equ Ha OSS} indeed defines an operator space, arising from the embedding
\[ \Ha \hookrightarrow B(\bigoplus^2_{k \in \Z} \Wa),\, f \mapsto \left((g_k)_{k \in \Z} \mapsto (\spr{g_k}{\phi_k f} e)_{k \in \Z} \right),\]
where $\displaystyle \bigoplus^2_{k \in \Z} \Wa$ is the Hilbert sum, and $e$ is some fixed element in $\Wa$ of norm 1.

In this section we focus on (unital) homomorphisms
\begin{equation}\label{Equ u} u : \Ha \to B(X) .
\end{equation}
We give a characterization when such mappings are matricially $\gamma$-bounded.
An example of a bounded homomorphism of this type is given by H\"ormander's classical theorem mentioned in the introduction,
which states that for $\alpha > \frac{d}{2},\: X = L^p(\R^d)$ and $p \in (1,\infty),$ the radial Fourier multiplier representation
$u_{-\Delta} : \Ha \to B(X) $
given by 
\begin{equation}\label{Equ u Delta}
u_{-\Delta}(f)g = \left[ f(|\cdot|^2) \hat{g} \right]\check{\phantom{i}} = f(-\Delta) g
\end{equation}
is bounded.
In fact, by means of our characterization, we will show in Section \ref{Sec 5 Examples} that $u_{-\Delta}$ is even matricially $\gamma$-bounded provided that $\alpha > \frac{d+1}{2}.$

For $n \in \N,$ let $\M^n$ be the space consisting of $n$-times continuously differentiable functions $f$ defined on $(0,\infty)$ such that
$\|f\|_{\M^n} = \sum_{k=0}^n \sup_{t > 0} |t^k f^{(k)}(t)|$ is finite.
Let us record how $\Wa,\,\Ha,\,\HI(\Sigma_\omega)$ and the auxiliary space $\M^n$ compare.
The proof is an easy verification, see also \cite[Lemma 4.15, Proof of Proposition 4.22, Proposition 4.9]{Kr}.

\begin{lem}\label{Lem Function spaces}
Let $\omega \in (0,\pi)$ and $\alpha > \frac12.$
\begin{enumerate}
\item $\HI(\Sigma_\omega) \hookrightarrow \Ha.$
\item $\Wa \hookrightarrow \Ha,$ where the embedding is completely bounded.
\item For any $\omega \in (0,\pi),$ $\HI(\Sigma_\omega) \cap \Wa$ is a dense subset of $\Wa.$
\item $\M^n \hookrightarrow \Ha$ for $n > \alpha.$
\item $\HI(\Sigma_\omega)$ is a dense subset of $\M^n.$
Moreover, any $f \in \Wa \cap \M^n$ can be simultaneously approximated by a sequence $(f_k)_{k \in \N} \subset \HI(\Sigma_\omega) \cap \Wa \cap \M^n.$
\end{enumerate}
\end{lem}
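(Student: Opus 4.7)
My approach is to pull every estimate back to the real-variable Sobolev space $\Wa_2(\R)$ through the logarithmic change of variable $t = e^s$, which turns the multiplicative dilation $t \mapsto 2^k t$ on $(0,\infty)$ into the translation $s \mapsto s + k\log 2$ on $\R$. Since the norm on $\Wa_2(\R)$ is translation invariant, the $\Wa$-norm on $(0,\infty)$ is scale invariant, and the identity
\[ \|\phi_k f\|_\Wa = \|\phi_0 \cdot f(2^k\cdot)\|_\Wa \]
reduces every uniform-in-$k$ estimate to one involving only the fixed bump $\phi_0$. The same reduction works matricially because dilation acts as a unitary on $\Wa$, hence as a complete isometry on $\Wa_r$.

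I would prove (2) first: after the reduction above, multiplication by $\phi_0$ is bounded on $\Wa$ because $\Wa$ is a Banach algebra for $\alpha > \tfrac12$ and $\phi_0 \in \Wa$. Since $\Wa$ carries the row Hilbert space structure, this operator is automatically completely bounded with the same norm, yielding $\|[\phi_k f_{ij}]\|_{M_n \otimes \Wa_r} \leq C \|[f_{ij}]\|_{M_n \otimes \Wa_r}$ uniformly in $k$ and $n$. For (4), the analogous scalar reduction, together with the scale invariance $\|f(2^k\cdot)\|_{\M^n} = \|f\|_{\M^n}$, leaves one to show $\|\phi_0 h\|_\Wa \leq C\|h\|_{\M^n}$ for $n > \alpha$; Faà di Bruno gives $\|h \circ \exp\|_{C^n_b(\R)} \lesssim \|h\|_{\M^n}$, hence $(\phi_0 h)\circ\exp \in C^n_c(\R) \hookrightarrow H^n(\R) \hookrightarrow \Wa_2(\R)$. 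Item (1) then follows from (4) together with the Cauchy-integral estimate $|t^k f^{(k)}(t)| \leq C_k \|f\|_{\infty,\omega}$, which holds for every $k$ whenever $f \in \HI(\Sigma_\omega)$ and shows $\HI(\Sigma_\omega) \hookrightarrow \M^N$ for any $N$.

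For item (3), given $f \in \Wa$ set $g = f \circ \exp \in \Wa_2(\R)$ and truncate the Fourier transform: $g_N = \mathcal F^{-1}(\chi_{[-N,N]} \hat g)$ converges to $g$ in $\Wa_2(\R)$ as $N \to \infty$, and by Paley--Wiener $g_N$ is the restriction to $\R$ of an entire function satisfying $|g_N(s+it)| \leq (2\pi)^{-1} e^{N|t|} \|\hat g\|_{L^1[-N,N]}$, which is finite since $\hat g \in L^2(\R) \subset L^1_{\mathrm{loc}}(\R)$. Hence $g_N \circ \log$ lies in $\HI(\Sigma_\omega) \cap \Wa$ for every $\omega < \pi$ and approximates $f$ in $\Wa$.

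The delicate part is item (5), where one must produce analytic approximants that control the $\M^n$ norm (and, when applicable, the $\Wa$ norm) simultaneously. My plan is to mollify in logarithmic coordinates with a Gaussian kernel: given $f \in \M^n$, set $g = f \circ \exp$ and $g_\epsilon = K_\epsilon * g$, where $K_\epsilon(s) = (4\pi\epsilon)^{-1/2} e^{-s^2/(4\epsilon)}$. Because $K_\epsilon$ extends to an entire function of $z$ and decays Gaussian-fast in $\Re z$ on every horizontal strip of bounded height, $g_\epsilon$ extends to a bounded analytic function on any such strip, so $f_\epsilon := g_\epsilon \circ \log$ belongs to $\HI(\Sigma_\omega)$ for every $\omega < \pi$. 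Convolution commutes with differentiation, so $\|g_\epsilon\|_{C^n_b(\R)} \leq \|g\|_{C^n_b(\R)}$, which by Faà di Bruno translates back to a uniform bound $\|f_\epsilon\|_{\M^n} \leq C\|f\|_{\M^n}$; pointwise convergence $g_\epsilon \to g$ is immediate, and if moreover $g \in \Wa_2(\R)$ then $g_\epsilon \to g$ in $\Wa_2(\R)$ as well. The density in $\M^n$ is to be read in the Convergence Lemma sense (uniform $\M^n$-bound plus pointwise convergence), which this construction delivers, and for $f \in \Wa \cap \M^n$ the same sequence provides the simultaneous $\Wa$-convergence required for the second assertion of (5).
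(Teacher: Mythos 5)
The paper does not actually prove this lemma: it is dismissed as ``an easy verification'' with a pointer to \cite{Kr}, so there is no in-paper argument to compare against. Your construction is a reasonable way to supply what is missing, and the core devices --- the logarithmic conjugation reducing everything to the fixed bump $\phi_0$, the fact that a bounded operator on a row Hilbert space is automatically completely bounded with equal norm (for item (2)), the Fa\`a di Bruno translation $\|h\circ\exp\|_{C^n_b}\cong\|h\|_{\M^n}$ followed by $C^n_c\hookrightarrow H^n\hookrightarrow \Wa_2$ (for item (4)), the Cauchy-estimate embedding $\HI(\Sigma_\omega)\hookrightarrow\M^N$ (for item (1)), and the frequency truncation for item (3) --- are all correct and complete.

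The one point that deserves explicit acknowledgement is your reading of ``dense'' in item (5). You are right that $\HI(\Sigma_\omega)$ is \emph{not} norm-dense in $\M^n$: a function bounded and analytic on a strip has all derivatives Lipschitz, hence uniformly continuous, whereas for instance $h(t)=\int_0^{\log t}\sin(u^2)\,du$ lies in $\M^1$ with $(h\circ\exp)'(s)=\sin(s^2)$ not uniformly continuous, so $h$ cannot be approximated in $\M^1$-norm by members of $\HI(\Sigma_\omega)$. Your Gaussian mollification therefore produces approximants $f_\epsilon$ with a uniform $\M^n$-bound and pointwise convergence only --- not $\M^n$-norm convergence --- and your decision to read ``dense'' in this bounded-pointwise (Convergence-Lemma) sense is the only reading under which the statement can be true. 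This is the interpretation consistent with how item (5) is ultimately exploited in the paper (via Lemma~\ref{Lem Convergence Mn} rather than via norm continuity), but you are filling a genuine gap in the statement's precision, and you should make that explicit rather than letting it pass as an incidental remark. For the $\Wa$-part of item (5), the simultaneous $\Wa$-norm convergence of $g_\epsilon=K_\epsilon*g$ does hold, by dominated convergence applied to $\widehat{g_\epsilon}=e^{-\epsilon\xi^2}\hat g$, and that portion of your argument is fully rigorous.
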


The main interest of $\M^n$ is the following convergence lemma, which is proved in \cite[Section 4.2.4]{Kr}.

\begin{lem}\label{Lem Convergence Mn}
Let $u : \M^n \to B(X)$ be bounded such that $u(f) = f(A)$ for some $0$-sectorial operator $A$ and any $f \in \bigcup_{\theta \in (0,\pi)} \HI(\Sigma_\theta).$
Let $(\phi_n)_{n \in \Z}$ be a dyadic partition of unity and $(a_n)_{n \in \Z}$ a bounded sequence.
Then $\sum_{n \in \Z} a_n \phi_n$ belongs to $\M^n$ and
\begin{equation}\label{Equ Lem A u}\sum_{n \in \Z} a_n u(\phi_n) x = u \left( \sum_{n \in \Z} a_n \phi_n \right)x \quad (x \in X). \end{equation}
\end{lem}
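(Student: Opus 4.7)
The argument splits into two parts. First, I would verify that $f := \sum_n a_n \phi_n$ belongs to $\M^n$. The scaling $\phi_n(t) = \phi_0(2^{-n}t)$ gives $t^k \phi_n^{(k)}(t) = s^k \phi_0^{(k)}(s)$ with $s = 2^{-n}t$. Since $\supp \phi_0 \subset [\tfrac12, 2]$, for each fixed $t > 0$ at most two indices $n$ satisfy $s \in \supp \phi_0$, so $\sum_n a_n t^k \phi_n^{(k)}(t)$ is a pointwise finite sum bounded by $2 \|(a_n)\|_\infty \cdot \sup_s |s^k \phi_0^{(k)}(s)|$. Summing over $k = 0, \ldots, n$ yields $\|f\|_{\M^n} \leq C \|(a_n)\|_\infty$.

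Second, for the identity, I form the partial sums $f_N := \sum_{|n| \leq N} a_n \phi_n$. The same scaling argument gives $\|f_N\|_{\M^n} \leq C \|(a_n)\|_\infty$ uniformly in $N$, and for each fixed $t > 0$ one has $f_N(t) = f(t)$ once $N$ is large enough. By linearity of $u$ we have $u(f_N) x = \sum_{|n|\leq N} a_n u(\phi_n) x$, so \eqref{Equ Lem A u} reduces to showing $u(f_N) x \to u(f) x$ in $X$ for every $x$. Since $f_N \to f$ holds pointwise and boundedly but \emph{not} in $\M^n$-norm (partial sums of a bounded sequence $a_n$ are not $\M^n$-Cauchy without further decay), mere boundedness of $u$ is insufficient. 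The task reduces to an $\M^n$-analogue of the $\HI$ Convergence Lemma \ref{Lem Convergence Lemma}: if $(g_N) \subset \M^n$ is uniformly bounded and $g_N(t) \to g(t)$ pointwise, then $u(g_N) x \to u(g) x$ strongly.

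Proving this Convergence Lemma is the main obstacle. My approach: first, deduce from Lemma \ref{Lem Function spaces}(5) that $u$ is multiplicative on $\M^n$, since multiplicativity holds on the dense subalgebra $\HI(\Sigma_\omega) \cap \M^n$ (where $u$ coincides with the $\HI$ calculus) and transfers by joint continuity of multiplication together with boundedness of $u$. Second, since $A$ is $0$-sectorial (in particular has dense range), $e(A) = A(1+A)^{-2}$ has dense range, so the set of vectors $x = u(e) y$ with $e(z) = z/(1+z)^2 \in \HI_0(\Sigma_\omega)$ and $y \in X$ is dense in $X$; it therefore suffices to check convergence on such $x$. Multiplicativity then gives $u(g_N) x = u(g_N e) y$, and the factor $e$ endows $g_N e$ with polynomial decay at $0$ and $\infty$, producing an $\M^n$-bounded family with an integrable envelope. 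I would then approximate each $g_N e$ by $\HI(\Sigma_\omega) \cap \M^n$ functions via Lemma \ref{Lem Function spaces}(5), and combine the Cauchy integral formula \eqref{Equ Cauchy integral formula} on the sector boundary with dominated convergence to conclude $u(g_N e) y \to u(ge) y$. Applied to $g_N = f_N$, $g = f$, this delivers \eqref{Equ Lem A u}.
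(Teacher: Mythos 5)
The paper does not give a proof of this lemma; it delegates it to \cite[Section 4.2.4]{Kr}, so I am assessing your proposal on its own merits rather than against an in-text proof.

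Your high-level plan is correct and very likely matches the outline of the cited proof: verify $\sum_n a_n\phi_n \in \M^n$ by the logarithmic scaling and bounded overlap of the $\phi_n$; establish multiplicativity of $u$ on $\M^n$ by density of $\HI(\Sigma_\omega)\cap\M^n$ and the Banach algebra property; reduce to the dense set $x = e(A)y$ with $e(\lambda)=\lambda(1+\lambda)^{-2}$; and exploit the decay of $e$ at $0$ and $\infty$. All of these steps are sound. (One small point worth making explicit: to pass from the dense set to all of $X$ at the end you need $\sup_N\|u(f_N)\|<\infty$, which follows from $\sup_N\|f_N\|_{\M^n}<\infty$ and boundedness of $u$; you use this implicitly.)

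The gap is in your final step. You write that ``mere boundedness of $u$ is insufficient'' because $f_N\not\to f$ in $\M^n$-norm, and you then look for an $\M^n$-analogue of the $\HI$ Convergence Lemma, proposing to approximate $g_Ne$ by functions in $\HI(\Sigma_\omega)\cap\M^n$ and invoke the Cauchy integral formula and dominated convergence. This route is not clearly viable: $g_Ne$ is not holomorphic, so the Cauchy integral representation~\eqref{Equ Cauchy integral formula} applies only to the approximants, and you would then have to interchange the limit in $N$ with the limit in the approximation index and with the line integral, none of which is set up. More importantly, the detour is unnecessary. Once you have multiplied by $e$, you do in fact get convergence in the $\M^n$-norm: $f-f_N$ is supported in $(0,2^{-N+1}]\cup[2^{N-1},\infty)$ with $\sup_N\|f-f_N\|_{\M^n}<\infty$, while $\max_{0\le l\le n}\sup_{t\notin[2^{-N+1},2^{N-1}]}|t^l e^{(l)}(t)|\to 0$ as $N\to\infty$ since $e$ and its derivatives decay like $t$ at $0$ and like $t^{-1}$ at $\infty$. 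The Leibniz rule then gives $\|(f-f_N)e\|_{\M^n}\to 0$. Consequently mere boundedness of $u$ on $\M^n$ \emph{does} suffice on the dense subspace $e(A)X$: $u(f_N)e(A)y = u(f_N e)y \to u(fe)y = u(f)e(A)y$. I would replace your proposed Cauchy-integral-plus-dominated-convergence argument with this one-line norm estimate, which also makes it unnecessary to formulate a general $\M^n$ Convergence Lemma (whose validity for arbitrary $\M^n$-bounded, pointwise convergent sequences is in fact delicate, since pointwise plus $\M^n$-bounded control gives only local $C^{n-1}$-convergence, not the $n$-th derivative control the $\M^n$-norm requires).
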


Many spectral multiplier theorems for Laplace type operators $A$ consist in the boundedness of $u$ in \eqref{Equ u}, which in turn is the functional calculus $u_A$ of some $0$-sectorial operator.
For example, in the case of \eqref{Equ u Delta} one has $A = - \Delta.$
In the sequel we will only consider homomorphisms of the form $u = u_A.$
The next lemma gives a criterion when this is the case.

\begin{lem}\label{Lem A u}
Let $\omega \in (0,\pi).$
\begin{enumerate}
\item
Let $u$ be a bounded homomorphism $u : \Ha \to B(X).$
There exists a $0$-sectorial operator $A$ such that
\begin{equation}\label{Equ Lem A u 2}
u(f) = f(A) \quad (f \in \HI(\Sigma_\omega)),
\end{equation}
if and only if the restriction of $u$ to $\HI(\Sigma_\omega)$ satisfies the Convergence Lemma \ref{Lem Convergence Lemma},
i.e. for any $f_n,\: f \in \HI(\Sigma_\omega)$ with $\sup_n \|f_n\|_{\infty,\omega} < \infty$ and $f_n(\lambda) \to f(\lambda)$ pointwise, we have $u(f)x = \lim_n u(f_n)x$ for any $x \in X.$
In this case, we write $f(A)$ in place of $u(f)$ for any $f \in \Ha.$
\item
Let $A$ be a $0$-sectorial operator.
Then there exists a bounded homomorphism $u : \Ha \to B(X)$ satisfying \eqref{Equ Lem A u 2} if and only if $\|f(A)\| \leq C \|f\|_{\Ha}\quad (f \in \HI(\Sigma_\omega)).$
Moreover, $u$ is uniquely determined by \eqref{Equ Lem A u 2}.
\end{enumerate}
\end{lem}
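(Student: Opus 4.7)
The lemma decomposes into four directions; the forward direction of each part is a one-liner. For (1) necessity, the restriction of $u$ to $\HI(\Sigma_\omega)$ is by hypothesis the $\HI$-calculus of $A$, which satisfies Lemma \ref{Lem Convergence Lemma} by definition. For (2) necessity, $\|f(A)\|=\|u(f)\|\leq\|u\|\,\|f\|_{\Ha}$ is immediate.

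For the sufficiency in (1), I would reconstruct the sectorial operator from resolvent functions. For $\lambda\in\C\setminus\overline{\Sigma_\omega}$ the function $r_\lambda(z)=(\lambda-z)^{-1}$ lies in $\HI(\Sigma_\omega)\subset\Ha$ with $\|r_\lambda\|_{\Ha}=O(|\lambda|^{-1})$ uniformly on complements of subsectors; set $R_\lambda:=u(r_\lambda)$. Multiplicativity of $u$ combined with the identity $r_\lambda-r_\mu=(\mu-\lambda)r_\lambda r_\mu$ yields a pseudo-resolvent family. Injectivity of $R_\lambda$ follows from the Convergence Lemma hypothesis together with unitality of $u$: if $R_\lambda x=0$ then the resolvent identity gives $R_\mu x=0$ for every admissible $\mu$, hence $u(g)x=0$ for all $g$ in the algebra generated by the $r_\mu$; this algebra is large enough that Lemma \ref{Lem Convergence Lemma} propagates the relation to $u(1)x=x=0$. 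Thus $(R_\lambda)$ is the resolvent of a closed operator $A$, and the $0$-sectoriality estimates come from $\|R_\lambda\|\leq\|u\|\,\|r_\lambda\|_{\Ha}$; density of $R(A)$ is obtained by applying the Convergence Lemma to the regularizers $\epsilon r_{-\epsilon}\to 0$, which yields $A(A+\epsilon)^{-1}x\to x$. Finally, for $f\in\HI_0(\Sigma_\omega)$ the Cauchy integral \eqref{Equ Cauchy integral formula} is an operator-norm limit of Riemann sums $\sum_k c_k r_{\lambda_k}$, so the continuity of $u$ allows commuting integration with $u$ and gives $u(f)=f(A)$; extension to arbitrary $f\in\HI(\Sigma_\omega)$ is by means of \eqref{Equ Convergence lemma}.

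For the sufficiency in (2), I build the extension via the intermediate space $\M^n$. Fix $n>\alpha$. Combining the hypothesis with Lemma \ref{Lem Function spaces}~(4) yields $\|f(A)\|\leq C\|f\|_{\M^n}$ on $\HI(\Sigma_\omega)$, and density (Lemma \ref{Lem Function spaces}~(5)) then produces a bounded homomorphism $v:\M^n\to B(X)$ extending $u_A$, to which Lemma \ref{Lem Convergence Mn} applies. For $f\in\Ha$ the dyadic pieces $\phi_m f$ lie in $\Wa\cap\M^n$ with $\|\phi_m f\|_{\Wa}\leq\|f\|_{\Ha}$, so the operators $v(\phi_m f)$ are defined with a norm bound uniform in $m$. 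One then sets
\[
u(f)x \;:=\; \sum_{m\in\Z} v(\phi_m f)\,x,
\]
and multiplicativity of $v$ lets one factor $v(\phi_m f)=v(\tilde\phi_m)\,v(\phi_m f)$ for a slightly wider cutoff $\tilde\phi_m\equiv 1$ on $\supp\phi_m$, reducing the convergence of the series to an application of Lemma \ref{Lem Convergence Mn}. Uniqueness is built into this construction: any two bounded homomorphisms agreeing on $\HI(\Sigma_\omega)$ must agree on $\M^n$ by density, on each $\phi_m f$ by the preceding extension argument, and therefore on all of $\Ha$ through the same Lemma \ref{Lem Convergence Mn}.

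The main obstacle is this assembly step in part~(2): the dyadic decomposition $\sum_m \phi_m f$ diverges in $\Wa$ and in $\M^n$, so the operators $v(\phi_m f)$ cannot be summed naively. The decisive input is Lemma \ref{Lem Convergence Mn}, which turns sums $\sum_m a_m\phi_m$ with bounded scalar coefficients into strongly convergent operator series; adapting this to coefficients that vary from slice to slice, and controlling the limit by $\|f\|_{\Ha}$ rather than by the divergent partial-sum norms in $\Wa$, is the technical core, which I would expect to require exactly the care carried out in Section \ref{Sec 6 Proofs Lemmas}.
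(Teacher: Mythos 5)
Your proof of part (1) follows the paper's route essentially verbatim: $R_\lambda = u((\lambda-\cdot)^{-1})$ is a pseudo-resolvent, the Convergence Lemma supplies the strong limits $\lambda R_\lambda x \to x$ and $\frac1n(\frac1n+A)^{-1}x\to 0$ needed to invoke \cite[Corollary~9.5]{Paz} and to get dense range, sectoriality comes from $\|r(A)\|\lesssim\|r\|_{\infty,\omega}$, and the identity $u(f)=f(A)$ on $\HI_0(\Sigma_\omega)$ is recovered by approximating the Cauchy integral by rational Riemann sums in the $\HI(\Sigma_{\omega/4})\hookrightarrow\Ha$ topology. Your injectivity digression is redundant (it is absorbed automatically by $\lambda R_\lambda x\to x$), but harmless. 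The forward directions and the uniqueness argument in (2) --- determination on $\Wa$ and $\M^n$ by density, $\sum_m u(\phi_m)x=x$ from Lemma~\ref{Lem Convergence Mn}, then $u(f)x=\sum_m u(f\phi_m)x$ with $f\phi_m\in\Wa$ --- also match the paper.

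The gap is in the ``if'' direction of (2). The paper does not construct the extension: it delegates to an external reference (\cite[Remark~4.27]{Kr}). You attempt a self-contained construction $u(f)x:=\sum_m v(\phi_m f)x$, but the convergence of this series does not follow from the tools at hand. Your proposed reduction --- factor $v(\phi_m f)=v(\tilde\phi_m)\,v(\phi_m f)$ and invoke Lemma~\ref{Lem Convergence Mn} --- does not work: that lemma governs sums $\sum_m a_m v(\phi_m)x$ with \emph{scalar} bounded coefficients $a_m$, whereas after factoring you are left with $\sum_m v(\tilde\phi_m)v(\phi_m f)x$, a sum with genuinely operator-valued ``coefficients'' for which no cancellation or unconditionality mechanism has been established. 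A uniform bound $\|v(\phi_m f)\|\lesssim\|f\|_{\Ha}$ together with $v(\tilde\phi_m)x\to 0$ is not enough to make the series converge. You flag this as ``the technical core'' and point to Section~\ref{Sec 6 Proofs Lemmas} for its resolution, but that section proves Lemmas~\ref{Lem 1}--\ref{Lem 4} (certain $\gamma$-boundedness and pointwise-multiplier estimates) and contains nothing about building the $\Ha$-extension; the actual construction lives outside this paper. So as written, part (2) sufficiency rests on a convergence claim that is neither proved nor correctly attributed.
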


\begin{proof}
(1) By Lemma \ref{Lem Function spaces} (1), only the ``if'' part has to be shown.
Suppose that $u : \Ha \to B(X)$ satisfies the Convergence Lemma \ref{Lem Convergence Lemma}.
Since $u$ is a homomorphism, one has $u((\lambda - \cdot)^{-1}) - u((\mu - \cdot)^{-1}) = (\mu - \lambda) u((\lambda - \cdot)^{-1}) u((\mu - \cdot)^{-1})$
for any $\lambda,\mu \in \C \backslash [0,\infty),$ i.e. $u((\lambda - \cdot)^{-1})$ is a pseudo resolvent \cite[Definition 9.1]{Paz}.
By Lemma \ref{Lem Convergence Lemma}, $u(\lambda (\lambda - \cdot)^{-1} )x \to x$ for any $x \in X$ and $|\lambda|\to \infty.$
Thus, by \cite[Corollary 9.5]{Paz}, there exists a densely defined operator $A$ such that $u((\lambda - \cdot)^{-1}) = (\lambda-A)^{-1}$ for $\lambda \in \C \backslash [0,\infty).$
Again by Lemma \ref{Lem Convergence Lemma}, $-\frac1n (\frac1n + A)^{-1}x \to 0$ for any $x \in X$ and $n \to \infty.$
Thus, $A$ has dense range \cite[Proposition 15.2]{KuWe}.
As $u$ is a bounded homomorphism, it now follows that $A$ is $0$-sectorial and for any rational function $r \in \Ha,$
$\|r(A)\| \lesssim \|r\|_{\Ha} \lesssim \|r\|_{\infty,\omega}.$

We claim that $A$ has an $\HI$ calculus coinciding with $u.$
Indeed, a given $f \in \HI_0(\Sigma_\omega),$ we write
\[f = \frac{1}{2\pi i} \int_{\partial\Sigma_{\omega/2}} f(\lambda) (\lambda - \cdot)^{-1} d\lambda.\]
As $f(\lambda) (\lambda - \cdot)^{-1} : \partial\Sigma_{\omega/2} \to \HI(\Sigma_{\omega/4})$ is continuous, we find a sequence
$r_n = \sum_{k=1}^K c_k f(\lambda_k) (\lambda_k-\cdot)^{-1}$ such that $r_n \to f$ in $\HI(\Sigma_{\omega/4}),$ so in particular in $\Ha.$
Clearly, $r_n$ are rational functions.
Inserting formally $(\cdot) = A$ in the Cauchy integral, the same arguments apply, and $r_n(A) \to f(A).$
We conclude $u(f) = \lim_n u(r_n) = \lim_n r_n(A) = f(A).$

We have shown that $u(f) = f(A)$ for any $f \in \HI_0(\Sigma_\omega).$
For a general $f \in \HI(\Sigma_\omega)$ we use the approximation \eqref{Equ Convergence lemma}.\\

(2) The ``only if'' part is clear and  the ``if'' part is shown in \cite[Remark 4.27]{Kr}.
Using density and Lemma \ref{Lem Function spaces}, $u$ is uniquely determined on $\Wa$ and $\M^n$ for any $n > \alpha.$
Thus $u$ satisfies the decomposition \eqref{Equ Lem A u}.
Then for $f \in \Ha,$ we have $u(f)x  = u(f) \sum_{k \in \Z} u(\phi_k) x = \sum_{k \in \Z} u(f \phi_k) x.$
As $f \phi_k \in \Wa,$ we conclude the uniqueness of $u.$
\end{proof}

The strategy to prove matricial $\gamma$-boundedness of a mapping from $\Ha$ to $B(X)$ will be to show the matricial $\gamma$-boundedness from $\Wa$ to $B(X),$
and then to pass to $\Ha$ by means of a spectral decomposition, given by \eqref{Equ Paley Littlewood} in the following theorem.
The restriction of the $\HI$ calculus angle $\omega$ to $(0,\pi/4)$ is only for technical reasons.

\begin{thm}\label{Thm Paley Littlewood}
Let $X$ be a Banach space with property $(\alpha).$
Let $\alpha > \frac12,\,\omega \in (0,\pi/4)$ and $A$ be a $0$-sectorial operator on $X$ having a bounded $\HI(\Sigma_\omega)$ calculus.
Assume that \[\|f(A)\| \leq C \|f\|_{\Wa} \quad (f \in \HI(\Sigma_\omega) \cap \Wa) \]
and that the extension resulting from density $u : \Wa \to B(X),\, f \mapsto f(A)$ is matricially $\gamma$-bounded.
Let $(\phi_n)_{n \in \Z}$ be a dyadic partition of unity.
Then
\begin{equation}\label{Equ Paley Littlewood}
\|x\| \cong \| \sum_{n \in \Z} \gamma_n \otimes \phi_n(A)x \|_{\Gauss(X)},
\end{equation}
the sum on the right hand side being convergent in $\Gauss(X).$
\end{thm}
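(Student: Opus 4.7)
My plan is to prove the two inequalities of \eqref{Equ Paley Littlewood} separately. The upper bound will come from Theorem~\ref{Thm Main 1} applied to the map $v : \ell^2_r \to B(X)$, $e_n \mapsto \phi_n(A)$. The lower bound is more delicate and will proceed via the reconstruction identity $\sum_n \tilde\phi_n(A) \phi_n(A)x = x$, where $\tilde\phi_n := \phi_{n-1} + \phi_n + \phi_{n+1}$ satisfies $\tilde\phi_n\phi_n = \phi_n$, combined with a dual square function estimate for the $\tilde\phi_n(A)$. A common technical ingredient is that $(\phi_n)_{n\in\Z}$ and $(\tilde\phi_n)_{n\in\Z}$ are Riesz sequences in $\Wa$: the bounded overlap of supports together with the Banach algebra property of $\Wa$ makes the Gram matrices $[\langle \phi_n, \phi_m\rangle_\Wa]$ and $[\langle \tilde\phi_n, \tilde\phi_m\rangle_\Wa]$ band matrices with uniformly bounded entries, hence of uniformly bounded $M_n$-operator norm.

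For the upper bound, consider $v_0 : \ell^2 \to \Wa$, $e_n \mapsto \phi_n$. The Riesz-sequence estimate shows $v_0$ is bounded, and since both spaces are Hilbert endowed with the row operator space structure, $v_0 : \ell^2_r \to \Wa_r$ is automatically completely bounded. Composing with the matricially $\gamma$-bounded $u$ yields that $v := u \circ v_0 : \ell^2_r \to B(X)$, $e_n \mapsto \phi_n(A)$, is matricially $\gamma$-bounded. Theorem~\ref{Thm Main 1}\,(2)$\Rightarrow$(1) then gives $\bignorm{\sum_n \gamma_n \otimes \phi_n(A)x}_{\Gauss(X)} \leq C\|x\|$ with uniformly bounded partial sums. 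Since property $(\alpha)$ forbids $c_0 \hookrightarrow X$, the Gaussian series converges in $\Gauss(X)$.

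For the lower bound, the key step is the dual square function estimate
\[
\Bignorm{\sum_n \tilde\phi_n(A) y_n}_X \leq C \Bignorm{\sum_n \gamma_n \otimes y_n}_{\Gauss(X)}.
\]
Granting this, I would substitute $y_n = \phi_n(A)x$ for $|n|\leq N$ and $0$ otherwise; using $\tilde\phi_n\phi_n = \phi_n$ and the homomorphism property of $u$ on $\Wa$ (so $u(\Phi_N) = \sum_{|n|\leq N}\phi_n(A) =: \Phi_N(A)$, where $\Phi_N := \sum_{|n|\leq N}\phi_n \in \Wa$), this gives $\|\Phi_N(A)x\|_X \leq C\bignorm{\sum_n \gamma_n \otimes \phi_n(A)x}_{\Gauss(X)}$ uniformly in $N$. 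Provided $\Phi_N(A)x \to x$ strongly, letting $N \to \infty$ yields the lower bound.

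The main obstacle is the dual estimate itself. A direct placement of the $\tilde\phi_n$'s in the first \emph{row} of $M_n \otimes \Wa_r$ fails, since that row has $M_n \otimes \Wa_r$-norm equal to $(\sum_j \|\tilde\phi_j\|_\Wa^2)^{1/2} \sim \sqrt{n}$. I would instead place them in the first \emph{column} of $M_n \otimes \Wa_r$, whose norm equals the square root of the operator norm of the band Gram matrix $[\langle \tilde\phi_i, \tilde\phi_j\rangle]$ and is therefore bounded uniformly in $n$; applying matricial $\gamma$-boundedness of $u$ then yields the companion ``upper-bound'' estimate $\bignorm{\sum_n \gamma_n \otimes \tilde\phi_n(A) y}_{\Gauss(X)} \leq C\|y\|$. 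Converting this column estimate into the desired row estimate requires a transposition/duality argument exploiting the finite cotype of $X$ implied by property $(\alpha)$ together with a matricial $\gamma$-bound for $u^*$ on $X^*$. Finally, the strong convergence $\Phi_N(A)x \to x$ is handled by density: on the dense subset $\{\chi(A) y : \chi \in \HI_0(\Sigma_\omega) \cap \Wa,\, y \in X\}$, the decay $|\chi(t)| \lesssim \min(t^\epsilon, t^{-\epsilon})$ combined with dilation invariance of $\Wa$ yields $\|\phi_n \chi\|_\Wa \lesssim 2^{-|n|\epsilon}$, so that $(1-\Phi_N)\chi = \sum_{|n| > N} \phi_n \chi \to 0$ in $\Wa$ and hence $\Phi_N(A)\chi(A)y = u(\Phi_N \chi)y \to u(\chi)y = \chi(A)y$; the uniform bound on $\|\Phi_N(A)\|$ extracted from the dual estimate then extends this convergence to all $x \in X$.
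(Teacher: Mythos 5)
Your upper bound is a genuinely different and shorter route than the paper's, and it is essentially sound, though one detail is off. The Gram matrix $[\langle\phi_n,\phi_m\rangle_{\Wa}]$ is \emph{not} a band matrix: the inner product of $\Wa\cong W^\alpha_2(\R)$ (after $t\mapsto e^t$) involves a Fourier weight, so disjointly supported functions are not orthogonal; the Banach-algebra property of $\Wa$ (a statement about products, not inner products) plays no role. What is true is that the matrix is Toeplitz (since $\phi_n\circ\exp$ is a translate of $\phi_0\circ\exp$) with rapidly decaying off-diagonal entries (because $\phi_0\circ\exp$ is Schwartz), which is enough for boundedness. With that fix, $v_0:\ell^2_r\to\Wa_r$ is bounded, hence automatically completely bounded between row Hilbert spaces, and composing with the matricially $\gamma$-bounded $u$ and invoking Theorem \ref{Thm Main 1} gives the upper bound of \eqref{Equ Paley Littlewood}. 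The paper instead derives this from a bounded $\M^n$ calculus obtained through Lemmas \ref{Lem 1}--\ref{Lem 4}; your route avoids that machinery for the upper bound.

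However the paper's longer route is not wasted effort, and this is exactly where your lower-bound argument has a genuine gap. The dual estimate $\|\sum_n\tilde\phi_n(A)y_n\|_X\leq C\|\sum_n\gamma_n\otimes y_n\|_{\Gauss(X)}$ is, by Hahn--Banach and expanding $\sum_n\langle y_n,\tilde\phi_n(A)'x'\rangle = \E\langle\sum_n\epsilon_n y_n,\sum_m\epsilon_m\tilde\phi_m(A)'x'\rangle$, equivalent to a square-function estimate $\|\sum_m\epsilon_m\otimes\tilde\phi_m(A)'x'\|_{\Rad(X')}\lesssim\|x'\|$ \emph{on the dual} $X'$. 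Your ``column placement'' step only produces the corresponding estimate on $X$ itself; it says nothing about $X'$. To pass to $X'$ you would need a matricial $\gamma$-bound for $u'$ on $X'$, which is neither a hypothesis nor a consequence of the hypotheses: matricial $\gamma$-boundedness does not dualize in general (Gaussian/Rademacher boundedness is not stable under adjoints unless $X$ is $K$-convex, and property $(\alpha)$ does not give $K$-convexity — $L^1$ is a counterexample). Invoking ``finite cotype of $X$'' and ``$u^*$'' is not enough to close this. Note also that bounding $\|\sum a_n\phi_n(A)\|$ directly by $\|u\|\cdot\|\sum a_n\phi_n\|_{\Wa}$ fails, since the latter grows like $\sqrt{N}$.

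This is precisely what the paper's detour through Lemmas \ref{Lem 1}--\ref{Lem 4} buys: they upgrade the $\Wa$-calculus to a bounded $\M^n$-calculus, and then $\|\sum_{k\in F}a_k\phi_k(A)\|_{B(X)}\lesssim 1$ uniformly over signs $(a_k)$ by Lemma \ref{Lem 4}. A uniform \emph{norm} bound on $B(X)$ transfers to $B(X')$ automatically by taking adjoints, so Rademacher averaging of $\sum a_k\tilde\phi_k(A)'$ gives the needed square-function estimate on $X'$ with no extra assumptions. Your proposal would need an analogous norm-level estimate (or an explicit hypothesis on $u'$ acting on $X'$) to complete the lower bound; as written it does not have one.

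Finally, your strong-convergence argument for $\Phi_N(A)x\to x$ is fine in form, but it leans on a uniform bound for $\|\Phi_N(A)\|$ which in turn leans on the dual estimate, so it inherits the same gap. Once the dual estimate is secured by the paper's route, that paragraph works.
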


As the proof is rather long we separate four preliminary lemmas, whose proofs are annexed in Section \ref{Sec 6 Proofs Lemmas}.

\begin{lem}\label{Lem 1}
Let $X$ have property $(\alpha)$, let $\alpha > \frac12.$
Let $A$ be as in Theorem \ref{Thm Paley Littlewood}.
Then for $\beta \in \N,\,\beta > \alpha,$
\begin{equation}\label{Equ NT}
\left\{\exp(-2^k z A) :\: k \in \Z \right\} \text{ is }\gamma\text{-bounded with constant }\lesssim \left| \frac{z}{\Re z}\right|^\beta \quad (\Re z > 0).
\end{equation}
\end{lem}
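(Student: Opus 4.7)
The plan is to exploit the matricial $\gamma$-boundedness of $u : \Wa \to B(X)$ via a uniform $\Wa$-norm estimate for compactly supported cut-offs of $\exp(-z\,\cdot)$. Since $\exp(-z\,\cdot)$ itself does not lie in $\Wa$ (on the logarithmic scale $s = \log t$ it tends to the nonzero constant $1$ as $s \to -\infty$, which is incompatible with $L^2$-integrability), I fix $\tilde\phi \in C_c^\infty((1/4,4))$ with dyadic dilates $\tilde\phi_n(t) = \tilde\phi(2^{-n}t)$ and first control $\|\tilde\phi \exp(-w\,\cdot)\|_\Wa$ for $w \in \C$ with $\Re w > 0$.

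The key estimate is
\[
\|\tilde\phi \exp(-w\,\cdot)\|_\Wa \leq C_\beta (|w|/\Re w)^\beta.
\]
On the logarithmic scale, the $k$-th $s$-derivative of $\tilde\phi(e^s)\exp(-we^s)$ is a finite sum of terms $[\partial^j(\tilde\phi\circ\exp)](s)\cdot Q_{k-j}(we^s)\exp(-we^s)$, with $Q_{k-j}$ a polynomial of degree at most $k-j$. The elementary pointwise bound $|Q_k(we^s)|\exp(-\Re w\, e^s) \leq C_k(|w|/\Re w)^k \exp(-\Re w\, e^s/2)$, combined with the compactness of $\log\supp\tilde\phi$, yields an $H^\beta(\R)$-estimate; the Sobolev embedding $H^\beta(\R)\hookrightarrow \Wa_2(\R)$ for $\beta > \alpha$ then gives the claim. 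Translation invariance of $\Wa_2(\R)$ in turn implies $\|\tilde\phi_n \exp(-z 2^k\,\cdot)\|_\Wa \leq C_\beta(|z|/\Re z)^\beta$ uniformly in $n,k \in \Z$, since the effective parameter is $w = z\,2^{n+k}$ with $|w|/\Re w = |z|/\Re z$. By Remark~\ref{Rem mat}~(1), the localized operator family $\{\tilde\phi_n(A)\exp(-z 2^k A) : n,k \in \Z\}$ is consequently $\gamma$-bounded with constant $\lesssim \|u\|_\mat (|z|/\Re z)^\beta$.

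The remaining step, which is the main obstacle, is to remove the localization $\tilde\phi_n(A)$ and obtain $\gamma$-boundedness of $\{\exp(-z 2^k A) : k \in \Z\}$ itself; a naive triangle inequality over $n$ diverges. I would execute this via a Calder\'on-type reproducing identity $\int_0^\infty \psi(sA)^2\,\frac{ds}{s} = \Id$, valid for a compactly supported $\psi \in \HI_0(\Sigma_\omega) \cap \Wa$ suitably normalized, which gives
\[
\exp(-z 2^k A)x = \int_0^\infty \bigl(\psi(s\,\cdot)^2\exp(-z 2^k\,\cdot)\bigr)(A)\,x\,\frac{ds}{s}.
\]
The scale invariance of the $\Wa$-norm reduces $\|\psi(s\,\cdot)^2\exp(-z 2^k\,\cdot)\|_\Wa$ to $\|\psi^2\exp(-w'\,\cdot)\|_\Wa$ with $w' = z\,2^k/s$ satisfying $|w'|/\Re w' = |z|/\Re z$, so the $\Wa$-bound $\lesssim (|z|/\Re z)^\beta$ persists uniformly in $s,k$, yielding $\gamma$-boundedness of the corresponding operators uniformly in $s$. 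The $s$-integral is handled dyadically, combining the $\gamma$-bound on each block with the pointwise decay of $\|\psi(sA)^2 x\|$ as $s \to 0, \infty$ (coming from $\psi \in \HI_0$) to obtain summability over dyadic $s$-intervals without losing the factor $(|z|/\Re z)^\beta$.
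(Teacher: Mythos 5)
Your first half is fine and overlaps with the paper: the scale-invariance reduction (effectively setting $w = z\,2^{n+k}$ so $|w|/\Re w = |z|/\Re z$), the $\Wa$-norm estimate for localized pieces, and the use of Remark~\ref{Rem mat}~(1) to convert matricial $\gamma$-boundedness of $u:\Wa\to B(X)$ into $\gamma$-boundedness of $\{h(A):\|h\|_{\Wa}\le 1\}$ are all present in the paper's argument. The problem is the second half, the ``remove the localization'' step, which you yourself flag as the main obstacle --- and it does not work as proposed.

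First, a nonzero $\psi\in\HI_0(\Sigma_\omega)$ cannot be compactly supported (a holomorphic function vanishing on an interval is identically zero), so the function you want to plug into the Calder\'on reproducing identity does not exist; you would have to use a genuinely holomorphic $\psi$ with only polynomial decay at $0$ and $\infty$. Second, and more seriously, the representation $\exp(-z2^kA)=\int_0^\infty \psi(sA)^2\exp(-z2^kA)\,\frac{ds}{s}$ gives a $\frac{ds}{s}$-integral over a family whose $\gamma$-constant you control only uniformly in $s$ by $\lesssim(|z|/\Re z)^\beta$; since $1\notin L^1(\frac{ds}{s})$, the integral cannot simply be closed by a uniform bound, and the ``pointwise decay of $\|\psi(sA)^2x\|$'' you invoke to repair this is not available in operator norm: $\|\psi(sA)\|_{B(X)}$ is bounded below uniformly in $s$ (the family $\psi(s\cdot)$ has constant $\HI$-norm), so you do not get a summable factor out of it. Making this summation rigorous would require a quantitative improvement of the blockwise $\gamma$-constant in $s$ and $k$ jointly, which your estimate does not supply.

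The paper avoids all of this with a simple additive decomposition. Fix $|z|=1$; write
\[
\exp(-z\lambda)=\underbrace{\exp(-(z+1)\lambda)}_{=:g(\lambda)}\;+\;\underbrace{\exp(-z\lambda)\bigl(1-e^{-\lambda}\bigr)}_{=:h(\lambda)}.
\]
Since $|\arg(z+1)|\le\pi/4$ and the calculus angle $\theta<\pi/4$, one has $g\in\HI(\Sigma_\theta)$ with $\|g\|_{\infty,\theta}\le 1$, and this piece is handled by the $\gamma$-boundedness of $\{f(A):\|f\|_{\infty,\theta}\le 1\}$ coming from the bounded $\HI$-calculus under property $(\alpha)$ --- a resource your proposal never invokes. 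The second piece $h$ vanishes linearly at $\lambda=0$ and decays exponentially at $\infty$, so on the logarithmic scale it is genuinely in $\Wa_2(\R)$, and one checks directly $\|h\|_{\Wa}\lesssim(\Re z)^{-\beta}$. No cut-off, no Calder\'on formula, no divergent integral. You should adopt this additive splitting or, equivalently, recognize that you need \emph{two} $\gamma$-bounded reservoirs ($\HI$ and $\Wa$), not just the $\Wa$ one.
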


\begin{lem}\label{Lem 2}
Let $A$ be a $0$-sectorial operator on some Banach space $X$ such that for some $\beta > 0,$ \eqref{Equ NT} holds.
Then for $\gamma = \beta + 1,$ we have
\begin{equation}\label{Equ Res}
\left\{ \lambda^{\frac12} (2^k A)^{\frac12}(\lambda - 2^k A)^{-1} :\: k \in \Z \right\}\text{ is }\gamma\text{-bounded with constant }\lesssim
|\arg \lambda|^{-\gamma} \quad (\Re \lambda > 0).
\end{equation}
\end{lem}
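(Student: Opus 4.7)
The plan is to exploit the algebraic identity
\[
\frac{\lambda^{1/2} w^{1/2}}{\lambda - w} = \frac{\lambda}{\lambda - w} - \frac{\lambda^{1/2}}{\lambda^{1/2} + w^{1/2}},
\]
verified by clearing denominators, which after applying the functional calculus of the $0$-sectorial operator $2^k A$ yields the decomposition
\[
\lambda^{1/2} (2^k A)^{1/2}(\lambda - 2^k A)^{-1} = \lambda (\lambda - 2^k A)^{-1} - \lambda^{1/2} \bigl(\lambda^{1/2} + (2^k A)^{1/2}\bigr)^{-1}.
\]
It then suffices to bound the $\gamma$-norm of each family on the right. The motivation for the split is that the second piece will turn out to be $\gamma$-bounded uniformly in $\lambda$, isolating the singular factor $|\arg\lambda|^{-\gamma}$ in the first piece alone.

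For the first piece I would set $\theta = \arg\lambda \in (-\pi/2, \pi/2) \setminus \{0\}$, pick $\phi$ with $|\phi| < \pi/2$ and $\Re(\lambda e^{i\phi}) < 0$, and rotate the contour in the Laplace representation of the resolvent (legitimate by analytic continuation from the case $\Re\lambda < 0$, $\phi = 0$, since $\exp(-z \cdot 2^k A)$ is analytic and bounded on $\Sigma_{\pi/2}$):
\[
\lambda(\lambda - 2^k A)^{-1} = -\lambda e^{i\phi} \int_0^\infty e^{\lambda e^{i\phi} s} \exp(-e^{i\phi} s \cdot 2^k A) \, ds.
\]
The hypothesis \eqref{Equ NT} applied with $z = e^{i\phi} s$ makes the semigroup family $\gamma$-bounded with constant $\lesssim (\cos\phi)^{-\beta}$, and integrating this $\gamma$-bound against the scalar kernel by a standard Minkowski-type argument yields
\[
\gamma\bigl(\{\lambda(\lambda - 2^k A)^{-1}\}_k\bigr) \lesssim (\cos\phi)^{-\beta} \int_0^\infty |\lambda|\, |e^{\lambda e^{i\phi} s}| \, ds = \frac{(\cos\phi)^{-\beta}}{|\cos(\theta + \phi)|}.
\]
Optimizing by taking $\phi = \operatorname{sign}(\theta)(\pi/2 - |\theta|/2)$ makes both $|\cos\phi|$ and $|\cos(\theta + \phi)|$ equal to $\sin(|\theta|/2) \gtrsim |\theta|$, giving the bound $\lesssim |\theta|^{-(\beta+1)} = |\arg\lambda|^{-\gamma}$.

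For the second family, the key point is that $(2^k A)^{1/2}$ is $0$-sectorial and its semigroup is recovered from that of $2^k A$ by Bochner's subordination
\[
\exp\bigl(-t (2^k A)^{1/2}\bigr) = \frac{t}{2\sqrt{\pi}} \int_0^\infty \tau^{-3/2} e^{-t^2/(4\tau)} \exp(-\tau \cdot 2^k A) \, d\tau \quad (t > 0).
\]
Since the $\tau$-kernel is a probability density and \eqref{Equ NT} with real $\tau > 0$ gives $\gamma$-constant $\lesssim 1$ for $\{\exp(-\tau \cdot 2^k A)\}_k$, the integrated family $\{\exp(-t(2^k A)^{1/2})\}_k$ is $\gamma$-bounded with constant $\lesssim 1$ uniformly in $t > 0$. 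Because $\Re\lambda > 0$ forces $\Re\lambda^{1/2} > 0$, the usual Laplace identity gives
\[
\lambda^{1/2} \bigl(\lambda^{1/2} + (2^k A)^{1/2}\bigr)^{-1} = \int_0^\infty \lambda^{1/2} e^{-\lambda^{1/2} t} \exp\bigl(-t(2^k A)^{1/2}\bigr) \, dt,
\]
and combined with $\int_0^\infty |\lambda^{1/2}| e^{-\Re(\lambda^{1/2}) t} dt = 1/\cos(\arg\lambda/2) \leq \sqrt 2$ this yields a uniform-in-$\lambda$ $\gamma$-bound for the second family. Summing the two contributions produces the claim with $\gamma = \beta + 1$.

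The main obstacle I anticipate lies in the rotation-angle optimization in the first piece: the two factors $(\cos\phi)^{-\beta}$, inherited from Lemma \ref{Lem 1} applied on a rotated ray, and $|\cos(\theta + \phi)|^{-1}$, produced by $L^1$-integrating the scalar kernel, must be balanced against each other exactly as $\phi$ is driven toward $\pi/2$; only the sharp choice $\phi = \operatorname{sign}(\theta)(\pi/2 - |\theta|/2)$ yields the precise exponent $\gamma = \beta + 1$, and any cruder choice loses a power of $|\arg\lambda|$.
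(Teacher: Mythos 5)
Your proof is correct and reaches the same conclusion as the paper, but the route is genuinely different. The paper's own proof first establishes the intermediate $\gamma$-bound
\[
\gamma\left( \left\{ (2^k t A)^{\frac12}\exp\bigl(-2^k t\,e^{\pm i(\frac{\pi}{2}-\omega)}A\bigr) : k\in\Z \right\}\right) \lesssim \omega^{-(\beta+\frac12)},
\]
obtained by geometrically splitting the ray $e^{\pm i(\pi/2-\omega)}t = s + e^{\pm i(\pi/2-\omega/2)}r$ (law of sines), factoring the operator as $(t/s)^{1/2}\cdot(2^ksA)^{1/2}e^{-2^ksA}\cdot e^{-2^kre^{\pm i(\pi/2-\omega/2)}A}$, bounding the middle factor by a Cauchy-integral argument and the last by \eqref{Equ NT}; it then writes $\lambda^{1/2}(2^kA)^{1/2}(\lambda-2^kA)^{-1}$ as a single rotated Laplace integral of this family and invokes the integral lemma once. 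You instead peel off the $\gamma$-singularity via the scalar partial-fraction identity $\lambda^{1/2}w^{1/2}(\lambda-w)^{-1}=\lambda(\lambda-w)^{-1}-\lambda^{1/2}(\lambda^{1/2}+w^{1/2})^{-1}$, handle the singular piece $\lambda(\lambda-2^kA)^{-1}$ by a rotated Laplace representation with the same optimal angle $\phi=\operatorname{sign}(\theta)(\pi/2-|\theta|/2)$ the paper effectively uses, and show the second piece is uniformly $\gamma$-bounded through Bochner subordination for $A^{1/2}$ plus the Laplace transform. The advantage of your split is conceptual: it isolates all of the $|\arg\lambda|^{-\gamma}$ blowup in the scalar-multiple-of-resolvent term, while the $A^{1/2}$-piece contributes only a constant; this makes the exponent $\gamma=\beta+1$ transparent as $(\beta)+(1)$ from the semigroup $\gamma$-bound and the scalar $L^1$-mass, whereas the paper distributes the exponent as $(\beta+\tfrac12)+(\tfrac12)$ across two steps. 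The price you pay is needing the fractional-power algebra ($(2^kA)^{1/2}$-resolvent, subordination) that the paper avoids. One small point to flag as needing a sentence of justification in a final write-up: in both steps you apply an integral lemma for $\gamma$-bounds requiring the family $\{\exp(-e^{i\phi}s\,2^kA)\}_k$ (respectively $\{\exp(-\tau\,2^kA)\}_k$) to be $\gamma$-bounded with constant uniform in the integration variable $s$ (respectively $\tau$), which follows from \eqref{Equ NT} together with the Minkowski-type argument; this is the same implicit step the paper makes via \cite[Proposition 2.6 (5)]{Kr}.
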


\begin{lem}\label{Lem 3}
Let $A$ be a $0$-sectorial operator on some space $X$ with property $(\alpha)$ having a bounded $\HI(\Sigma_\omega)$ calculus.
Assume that $A$ satisfies \eqref{Equ Res} for some $\gamma > 0.$
Then for any $n > \gamma,$
\[ \|f(A)\| \leq C \|f\|_{\M^n} \quad \left(f \in \HI(\Sigma_\omega)\right).\]
\end{lem}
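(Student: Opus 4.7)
The plan is to reduce via density to $f$ in a convenient dense subalgebra, then represent $f(A)$ as an integral against the $\gamma$-bounded resolvent family \eqref{Equ Res} and apply the Kalton-Weis $\gamma$-multiplier theorem (Lemma \ref{Lem Folklore square functions}(3)).

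First, by Lemma \ref{Lem Function spaces}(5), the subalgebra $\HI_0(\Sigma_\omega) \cap \Wa \cap \M^n$ is dense in $\HI(\Sigma_\omega)$ simultaneously in the $\M^n$-topology and pointwise-boundedly in the sense of Lemma \ref{Lem Convergence Lemma}. Since $A$ has a bounded $\HI$ calculus, any estimate $\|f(A)\| \leq C \|f\|_{\M^n}$ established on this dense subalgebra will extend to all of $\HI(\Sigma_\omega)$ via Lemma \ref{Lem Convergence Lemma} combined with the regularisation \eqref{Equ Convergence lemma}. So it suffices to prove the bound for $f \in \HI_0(\Sigma_\omega) \cap \M^n$.

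For such $f$, I would start from the absolutely convergent Cauchy integral $f(A) = \frac{1}{2\pi i}\int_{\partial\Sigma_\psi} f(\lambda)(\lambda - A)^{-1}\,d\lambda$ at angle $\psi = \omega/2$, and parametrize the upper ray as $\lambda = t e^{i\psi}$, $t > 0$. The algebraic identity
\[
\lambda^{1/2}(2^k A)^{1/2}(\lambda - 2^k A)^{-1} = (2^{-k}\lambda)^{1/2} A^{1/2}(2^{-k}\lambda - A)^{-1}
\]
lets one reinterpret \eqref{Equ Res} as $\gamma$-boundedness, with constant $\lesssim \psi^{-\gamma}$, of the continuously parametrized family $\{R_t := (t e^{i\psi})^{1/2} A^{1/2}(t e^{i\psi} - A)^{-1} : t > 0\}$. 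The goal is then to rewrite
\[
f(A)x = \int_0^\infty h_f(t)\, R_t\, Bx\,\frac{dt}{t} + (\text{symmetric lower ray})
\]
for a scalar-valued kernel $h_f$ with $\|h_f\|_{L^2(\R^+, dt/t)} \lesssim \|f\|_{\M^n}$ and a bounded auxiliary operator $B$. Once this is done, the Kalton-Weis theorem combined with the property-$(\alpha)$ identity $\|h_f \otimes y\|_{\gamma(\R^+,X)} \cong \|h_f\|_{L^2}\|y\|$ for scalar $h_f$ immediately yields $\|f(A)x\| \lesssim \psi^{-\gamma} \|f\|_{\M^n}\|x\|$, and fixing $\psi = \omega/2$ absorbs the angular factor.

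The main obstacle is establishing the desired integral representation together with the $L^2$-bound on $h_f$. The natural approach is to integrate by parts $n$ times in $t$ on each dyadic interval $[2^k, 2^{k+1}]$ (justified by the $\HI_0$ decay of $f$ at both ends of the contour, which kills boundary terms), transferring derivatives onto the scalar weight and producing factors $(te^{i\psi})^k f^{(k)}(t e^{i\psi})$ controlled pointwise by $\|f\|_{\M^n}$. The delicate point is to route these integrations by parts so as to preserve the single-power operator structure $\lambda^{1/2} A^{1/2}(\lambda - A)^{-1}$ from \eqref{Equ Res}, i.e.\ not to produce higher powers $(\lambda - A)^{-k}$ whose $\gamma$-boundedness is not assumed. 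This is accomplished by transferring scalar weights $\lambda^{\pm 1/2}$ between the integrand and the operator at each IBP step, while reabsorbing any potentially unbounded $A^{1/2}$ factor back into the resolvent; the required hypothesis $n > \gamma$ arises exactly as the condition under which the resulting scalar kernel $h_f$ lies in $L^2(\R^+, dt/t)$.
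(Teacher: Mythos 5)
There is a genuine gap. The crux of your plan is to treat \eqref{Equ Res} as giving $\gamma$-boundedness of the \emph{continuously} parametrized family $\{R_t : t > 0\}$ on a ray, and then invoke the Kalton--Weis pointwise multiplier theorem (Lemma~\ref{Lem Folklore square functions}(3)). But \eqref{Equ Res} does not give that. For each fixed $\lambda$ it asserts $\gamma$-boundedness of the \emph{dyadic} family $\{\lambda^{\frac12}(2^kA)^{\frac12}(\lambda-2^kA)^{-1} : k \in \Z\}$; after your algebraic rewrite this is the family of $R_\mu$ over the discrete set $\mu \in 2^{-\Z}\lambda$. The constant is indeed uniform in $|\lambda|$, so each dyadic orbit on a ray is $\gamma$-bounded with a uniform bound, but the union of uniformly $\gamma$-bounded sets need not be $\gamma$-bounded. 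Nothing in the hypotheses converts the dyadic statement into a continuous one, and Lemma~\ref{Lem Folklore square functions}(3) genuinely needs the full family $\{N(t): t \in \Omega\}$ $\gamma$-bounded. This is exactly the obstacle the paper is built around, and its proof never passes to a continuous $\gamma$-bounded resolvent family.

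A second issue: even granting the continuous family, the step from the $\gamma(\R_+,X)$-estimate on $t \mapsto h_f(t)R_tBx$ to a bound on $\|f(A)x\|_X$ is not spelled out, and the natural routes do not fit your set-up. If $h_f \in L^1(dt/t)$, one uses the integral lemma for $\gamma$-bounds (the one the paper invokes in Lemma~\ref{Lem 2}), not the multiplier theorem; if $h_f \in L^2(dt/t)$ only, one needs a bilinear pairing against a second square function term in $\gamma(\R_+,X')$, and you do not produce it. Moreover the ``auxiliary bounded operator $B$'' absorbing the dangerous $A^{-1/2}$ from $(\lambda-A)^{-1} = \lambda^{-1/2}A^{-1/2}\cdot\lambda^{1/2}A^{1/2}(\lambda-A)^{-1}$ is asserted but not constructed; this is precisely where the paper instead factors $(tA)^{\frac12}(e^{ij\theta}-tA)^{-1} = \phi_{j\theta}(tA)\psi(tA)\psi(tA)$ so that the two $\psi$-factors supply the primal and dual square functions and the $\phi_{j\theta}$-factor carries the $\gamma$-bound. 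The paper's argument also differs at the outset: it first reduces, via \cite[Proposition~4.18]{Kr} and \cite[p.~73]{CDMY}, to proving an angular estimate $\|f(A)\| \lesssim \theta^{-\delta}\|f\|_{\infty,\theta}$ with $\delta \in (\gamma,n)$ for $f \in \HI_0(\Sigma_\theta)$, and then tracks the $\theta$-dependence through a discrete Kalton--Weis-type estimate where \eqref{Equ Res} is used exactly in its dyadic form. Your integration-by-parts route would need to either reproduce this reduction or supply the missing continuity/duality steps, neither of which appears in the sketch.
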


\begin{lem}\label{Lem 4}
Let $n \in \N.$
Let $(g_k)_{k \in \Z}$ satisfy $\sup_{k \in \Z} \|g_k\|_{\M^n} < \infty.$
Suppose that the supports of $g_k$ satisfy the following overlapping condition
\[ \sup_{x > 0}\#\{ k \in \Z:\: \supp g_k \cap [\frac12 x,2x] \not= \emptyset \} < \infty.\]
Then $\sum_{k \in \Z} g_k,$ which is consequently pointwise a finite sum belongs to $\M^n,$ and
\[\|\sum_{k \in \Z} g_k\|_{\M^n} \lesssim \sup_{k\in\Z} \|g_k\|_{\M^n} < \infty.\]
\end{lem}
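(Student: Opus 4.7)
\textbf{Proof plan for Lemma \ref{Lem 4}.}
Denote $N = \sup_{x>0}\#\{k\in\Z : \supp g_k \cap [\tfrac12 x,2x] \neq \emptyset\}$ and $M = \sup_{k\in\Z}\|g_k\|_{\M^n}$, both finite by hypothesis. The plan is a direct two-step argument: first show local finiteness of the sum (so that termwise differentiation is legal and gives a $C^n$ function), and then bound each seminorm $\sup_{t>0}|t^j f^{(j)}(t)|$ for $j = 0,\ldots,n$ by applying the overlap condition with $x = t$.

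For the first step, fix any compact interval $[a,b]\subset(0,\infty)$. I would cover it by finitely many intervals of the form $[2^{m-1},2^{m+1}]$ with $m$ ranging over some finite set $F\subset\Z$ of cardinality $|F|\lesssim \log_2(b/a)+1$. By the overlap hypothesis applied to each $x = 2^m$, at most $N$ indices $k$ satisfy $\supp g_k\cap[2^{m-1},2^{m+1}]\neq\emptyset$. Hence at most $|F|\,N$ indices $k$ have $\supp g_k\cap[a,b]\neq\emptyset$, so $\sum_{k\in\Z} g_k$ is, on $[a,b]$, a finite sum of $C^n$ functions. Since $[a,b]$ was arbitrary, $\sum_k g_k \in C^n((0,\infty))$ with $(\sum_k g_k)^{(j)} = \sum_k g_k^{(j)}$ pointwise on $(0,\infty)$.

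For the second step, fix $t>0$ and apply the overlap condition with $x = t$: since $t\in[\tfrac12 t, 2t]$, the set $\{k\in\Z : t\in\supp g_k\}$ has cardinality at most $N$. For any $j\in\{0,1,\ldots,n\}$, only these indices contribute to the sum $\sum_k g_k^{(j)}(t)$, so by the triangle inequality
\[
\Bigl|t^j \Bigl(\sum_{k\in\Z} g_k\Bigr)^{(j)}(t)\Bigr| \leq \sum_{k:\, t\in\supp g_k} |t^j g_k^{(j)}(t)| \leq N\cdot M.
\]
Summing over $j=0,\ldots,n$ gives $\bignorm{\sum_k g_k}_{\M^n}\leq (n+1)NM$, which is the required bound.

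There is no substantive obstacle here; the only point worth noting is the trivial but crucial observation that taking $x=t$ in the overlap hypothesis forces the indices contributing at the single point $t$ to be bounded by $N$, which turns the a priori infinite sum into a uniformly bounded finite sum at every point.
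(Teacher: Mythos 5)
Your proof is correct and follows essentially the same route as the paper: observe that at each point $t>0$ the overlap condition forces at most $N$ of the $g_k$ to be nonzero nearby, so the sum is locally finite, differentiates termwise, and each seminorm is bounded by $N\sup_k\|g_k\|_{\M^n}$. You make the local-finiteness step more explicit than the paper (which just says ``almost all $g_k$ vanish in a neighborhood of $x$''), and your final constant $(n+1)NM$ is actually the one the argument delivers (the paper records $NM$, but that minor slip is immaterial since the lemma asserts only $\lesssim$).
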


\begin{proof}[Proof of Theorem \ref{Thm Paley Littlewood}]
Using Lemmas \ref{Lem 1}, \ref{Lem 2} and \ref{Lem 3} one after another shows that $\|f(A)\| \lesssim \|f\|_{\M^n}$ for $n$ sufficiently large $(n > \lfloor \alpha \rfloor + 2).$
For any $k \in \Z,$ let $a_k \in \{ 1 , - 1\}.$
Apply Lemma \ref{Lem 4} with $g_k = a_k \phi_k.$
It is easy to check that $\|g_k\|_{\M^n}$ is independent of $k \in \Z.$
Further, the overlapping condition is clearly satisfied with constant 2.
Thus we have, for any finite $F \subset \Z,$
\[ \| \sum_{k \in F} a_k \phi_k(A) x\| \lesssim \|\sum_{k \in F} a_k \phi_k \|_{\M^n} \|x\| \lesssim \|x\|.\]
Replacing $a_k$ by independent Rademacher variables $\epsilon_k$ and taking expectation gives
\[\|\sum_{k \in F} \epsilon_k \otimes \phi_k(A) x\|_{\Rad(X)} \lesssim \|x\|.\]
Since $X$ has property $(\alpha),$ the equivalence of Gaussian and Rademacher sums \eqref{Equ Gauss Rad} holds.
By \eqref{Equ Lem A u}, $\sum_{k \in \Z} a_k \phi_k(A)x$ converges in $X.$
By dominated convergence (resp. \eqref{Equ Gauss Rad}), convergence holds also in $\Rad(X)$ (resp. $\Gauss(X)$), when $a_k$ is replaced by $\epsilon_k$ (resp. $\gamma_k$).

We have shown
\[ \| \sum_{k \in \Z} \gamma_k \otimes \phi_k(A) x \|_{\Gauss(X)} \lesssim \|x\|. \]

For the reverse inequality, we argue by duality.
Let $x' \in X',$ write $\E$ for expectation and $\widetilde\phi_l = \sum_{k=l-1}^{l+1}\phi_k.$
By the support condition on the $\phi_k,$ $\widetilde\phi_l \phi_l = \phi_l.$
Then using the independence of the $\gamma_k,$ we have
\begin{align*}
\left|\spr{x}{x'}\right| & = \left|\E \spr{\sum_{k \in \Z} \gamma_k \phi_k(A)x}{\sum_{l \in \Z} \gamma_l \widetilde\phi_l(A)'x'}\right| \\
& \leq \|\sum_{k \in \Z} \gamma_k \otimes \phi_k(A) x\|_{\Gauss(X)} \| \sum_{l \in \Z} \gamma_l \otimes \widetilde\phi_l(A)'x'\|_{\Gauss(X')}.
\end{align*}
We conclude the proof by the same argument as above which shows that
\[\| \sum_{l \in \Z} \gamma_l \otimes \widetilde\phi_l(A)' x'\|_{\Gauss(X')} \lesssim \|x'\|.\]
\end{proof}

The main result of this section reads as follows.

\begin{thm}\label{Thm Ha}
Let $A$ be a $0$-sectorial operator with bounded $\HI(\Sigma_\theta)$ calculus for some $\theta \in (0,\pi)$ on a space $X$ with property $(\alpha).$
Let $\alpha > \frac12.$
Then the following are equivalent.
\begin{enumerate}
\item For any $x \in X,$ $(1+t^2)^{-\alpha/2} A^{it}x$ belongs to $\gamma(\R,X)$ and
\[ \| (1 + t^2)^{-\alpha/2} A^{it} x\|_{\gamma(\R,X)} \leq C \|x\|.\]
\item The $\HI$ calculus of $A$ extends to a matricially $\gamma$-bounded mapping $u : \Ha \to B(X).$
\end{enumerate}
\end{thm}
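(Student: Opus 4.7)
The proof will hinge on the completely isometric isomorphism $v \colon L^2(\R) \to \Wa$ between the underlying Hilbert spaces (hence between the row Hilbert spaces $L^2(\R)_r$ and $\Wa_r$) defined by
\[
(v(g))(s) = c \int_{\R} g(t) (1+t^2)^{-\alpha/2} s^{it}\, dt
\]
for a suitable constant $c$. Indeed, under $\Wa \cong W^\alpha_2(\R)$ via $f \mapsto f \circ \exp$, the Fourier transform turns $g \mapsto v(g) \circ \exp$ into $g \mapsto \mathcal{F}^{-1}((1+t^2)^{-\alpha/2} g)$, an isometric isomorphism $L^2(\R) \to W^\alpha_2(\R)$. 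The composition $u \circ v \colon L^2(\R) \to B(X)$ then acts as the integral operator with kernel $t \mapsto (1+t^2)^{-\alpha/2} A^{it}$, so the estimate in (1) reads exactly as $\|(u \circ v)(\cdot) x\|_{\gamma(L^2(\R),X)} \leq C \|x\|$, i.e., condition (1) of Theorem \ref{Thm Main 1} for the map $u \circ v$.

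The direction \emph{(2) $\Rightarrow$ (1)} will then be immediate: since $\Wa \hookrightarrow \Ha$ is completely bounded (Lemma \ref{Lem Function spaces}(2)), the restriction $u \colon \Wa_r \to B(X)$ inherits matricial $\gamma$-boundedness, and so does $u \circ v \colon L^2(\R)_r \to B(X)$, so Theorem \ref{Thm Main 1} (2) $\Rightarrow$ (1) will yield the square function estimate.

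For \emph{(1) $\Rightarrow$ (2)}, I will apply Theorem \ref{Thm Main 1} (1) $\Rightarrow$ (2) to $u \circ v$, first on the dense subspace $\HI(\Sigma_\omega) \cap \Wa \subset \Wa$ (Lemma \ref{Lem Function spaces}(3)), obtaining after continuous extension a matricially $\gamma$-bounded map $u \colon \Wa_r \to B(X)$. Restricting to scalar matrices yields $\|f(A)\| \lesssim \|f\|_\Wa$ for $f \in \HI(\Sigma_\omega) \cap \Wa$, which puts me in position to invoke Theorem \ref{Thm Paley Littlewood}---after a standard reduction of the $\HI$ calculus angle below $\pi/4$---to produce the Paley-Littlewood decomposition \eqref{Equ Paley Littlewood} together with $\|f(A)\| \lesssim \|f\|_\Ha$ on $\HI(\Sigma_\omega)$, and via Lemma \ref{Lem A u}(2) the extension of the $\HI$ calculus to a bounded homomorphism $u \colon \Ha \to B(X)$.

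To finally promote matricial $\gamma$-boundedness from $\Wa_r$ to $\Ha$, I fix $F = [f_{ij}] \in M_n \otimes \Ha$ and $x_j \in X$, apply Paley-Littlewood inside the outer Gaussian sum, and use $\phi_k f_{ij} = \phi_k f_{ij} \widetilde\phi_k$ with $\widetilde\phi_k = \phi_{k-1} + \phi_k + \phi_{k+1}$:
\[
\Bignorm{\sum_i \gamma_i \otimes \sum_j f_{ij}(A) x_j}_{\Gauss_n(X)} \cong \Bignorm{\sum_{i,k} \gamma_i \otimes \gamma_k \otimes \sum_j (\phi_k f_{ij})(A) \widetilde\phi_k(A) x_j}_{\Gauss_n(\Gauss(X))}.
\]
Property $(\alpha)$ collapses the iterated Gaussian sum to a single one indexed by $(k,i)$; the result is then (after truncating $|k| \leq K$ and passing to a limit) the matricial action on $\Gauss_{n(2K+1)}(X)$ of the block-diagonal matrix $\widetilde F \in M_{n(2K+1)} \otimes \Wa_r$ whose diagonal blocks are $[\phi_k f_{ij}]$. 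Block-diagonality gives $\|\widetilde F\|_{M_{n(2K+1)} \otimes \Wa_r} = \sup_{|k| \leq K}\|[\phi_k f_{ij}]\|_{M_n \otimes \Wa_r} \leq \|F\|_{M_n \otimes \Ha}$, so the matricial $\gamma$-boundedness of $u \colon \Wa_r \to B(X)$ controls the display by a constant times $\|F\|_{M_n \otimes \Ha} \bignorm{\sum_{k,j} \gamma_{(k,j)} \otimes \widetilde\phi_k(A) x_j}$. A final use of property $(\alpha)$ and of Paley-Littlewood applied to $(\widetilde\phi_k)$---which has bounded overlap and therefore gives the same decomposition as $(\phi_k)$---will reduce this to $\|F\|_{M_n \otimes \Ha} \bignorm{\sum_j \gamma_j \otimes x_j}_{\Gauss_n(X)}$. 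The main obstacle will be exactly this last transfer step: carefully tracking the nested Gaussian sums under property $(\alpha)$ and verifying the block-diagonal row-norm identity in $M_{n(2K+1)} \otimes \Wa_r$.
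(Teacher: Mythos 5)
Your proposal is correct and follows essentially the same route as the paper: the unitary identification of $\Wa$ with $L^2(\R)$ (your $v$ is the paper's $w^{-1}$) to reduce to Theorem~\ref{Thm Main 1}, then Theorem~\ref{Thm Paley Littlewood} and the block-diagonal dilation $\widetilde F$ in $M_{n(2K+1)}\otimes\Wa_r$ to pass from $\Wa_r$ to $\Ha$, with Lemma~\ref{Lem A u}(2) supplying the extension. The one place to be slightly careful is the claim that Theorem~\ref{Thm Paley Littlewood} by itself yields $\|f(A)\|\lesssim\|f\|_\Ha$: that inequality also uses the $\gamma$-boundedness of $\{h(A):\|h\|_\Wa\le 1\}$ (the diagonal restriction of the matricial $\gamma$-boundedness of $u|_{\Wa}$) to control $\sum_k\gamma_k\otimes(\phi_k f)(A)\widetilde\phi_k(A)x$; equivalently, it is the $n=1$ case of your block-diagonal argument, which is exactly what the paper does before invoking Lemma~\ref{Lem A u} and then repeating the argument for general $F\in M_n\otimes\Ha$.
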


\begin{proof}
Assume first that the theorem is shown under the additional assumption that $\theta < \frac{\pi}{4}.$
For a general $\theta \in (0,\pi),$ we can reduce to this case by considering $B = A^{\frac14}.$
Namely, by \cite[Theorem 2.4.2]{Haa}, $B$ has a bounded $\HI(\Sigma_\omega)$ calculus for some $\omega \leq \frac{\theta}{4} < \frac{\pi}{4}.$
Moreover,
\[ (1 + t^2)^{-\alpha/2} B^{it} x  = \left( \frac{1 + (\frac{t}{4})^2}{1 + t^2}\right)^{\alpha/2} \cdot \left( 1 + \frac{t}{4} \right)^{-\frac{\alpha}{2}} A^{i\frac{t}{4}} x.\]
The first factor is bounded, so its multiplication with an $L^2(\R)$ function is a bounded operation on $L^2(\R).$
The same holds for its inverse, and also for the change of variables $f \mapsto f(\frac{\cdot}{4}),$ and its inverse.
Thus, by Lemma \ref{Lem Folklore square functions}, if $A$ satisfies (1) then so does $B,$ so $B$ satisfies (2).
As $\HI(\Sigma_\omega) \hookrightarrow \Ha,$ $B$ has an $\HI(\Sigma_\omega)$ calculus actually for any $\omega >0,$ so by \cite[Theorem 2.4.2]{Haa}, $A$ has an $\HI(\Sigma_\theta)$ calculus for some $\theta < \frac{\pi}{4}.$
The same is true, provided that $A$ satisfies (2).
Thus (1) or (2) imply that the assumption of the theorem actually holds with $\theta < \frac{\pi}{4}.$
We suppose from now on that $\theta < \frac{\pi}{4}.$\\

\noindent
$(1) \Longrightarrow (2).$\\
Fix an arbitrary orthonormal basis $(f_k)_k$ of $L^2(\R).$
Let $T_k \in B(X)$ be defined by
\[\spr{T_k x}{x'} = \frac{1}{2 \pi} \int_\R f_k(t) (1 + t^2)^{-\alpha/2} \spr{A^{it} x}{x'} dt\]
and $v : \ell^2 \to B(X)$ the linear mapping given by $e_k \mapsto T_k.$
Then $(1)$ implies
\[\|v(\cdot)x\|_{\gamma(\ell^2,X)} = \| \sum_k \gamma_k \otimes v(e_k)x \|_{\Gauss(X)} = \| \sum_k \gamma_k \otimes T_k x \|_{\Gauss(X)} = \|(1 + t^2)^{-\frac{\alpha}{2}} A^{it}x\|_\gamma \lesssim \|x\|,\] so that by Theorem \ref{Thm Main 1}, $v : \ell^2_r \to B(X)$ is matricially $\gamma$-bounded.
Consider the mapping $w : \Wa \to \ell^2,\, f \mapsto \left( \frac{1}{2 \pi} \int_\R (f\circ \exp)\hat{\phantom{i}}(t) (1 + t^2)^{\alpha/2} \overline{f_k}(t) dt \right)_k.$
It is easy to check that $w$ is unitary and consequently, $\tilde{u} = v \circ w : \Wa_r \to B(X)$ is also matricially $\gamma$-bounded.
On the other hand, $\tilde{u}(f) = f(A)$ for any $f \in \HI(\Sigma_\theta) \cap \Wa.$
Indeed, by the representation formula in \cite[Proposition 4.22]{Kr},
\begin{align*}
2 \pi \spr{f(A)x}{x'} & = \int_\R (f \circ \exp)\hat{\phantom{i}}(t) \spr{A^{it}x}{x'} dt\\
& = \int_\R (f \circ \exp)\hat{\phantom{i}}(t) (1 + t^2)^{\alpha/2} (1 + t^2)^{-\alpha/2} \spr{A^{it}x}{x'} dt\\
& = \sum_{k} \int_\R (f\circ\exp)\hat{\phantom{i}}(t) (1 + t^2)^{\alpha/2} \overline{f_k}(t) dt \spr{T_k x}{x'}\\
& = 2 \pi \spr{\tilde{u}(f)x}{x'}.
\end{align*}

Let $n \in \N$ and $F = [f_{ij}] \in M_n \otimes \HI(\Sigma_\theta).$
We show that $\| [ f_{ij}(A) ] \|_{M_n \otimes_\gamma B(X)} \lesssim \|[f_{ij}]\|_{M_n \otimes \Ha}.$
For $N \in \N$ consider
\[ F_N = \left[
           \begin{array}{cccc}
             [\phi_{-N} f_{ij}] & 0 & \ldots & 0 \\
             0 & [\phi_{-N + 1} f_{ij}] &  & \vdots \\
             \vdots &  & \ddots & 0 \\
             0 & \ldots & 0 & [\phi_N f_{ij}] \\
           \end{array}
         \right] \in M_{(2N + 1)n} \otimes \Wa.
\]
By \eqref{Equ Ha OSS}, we have $\sup_N \|F_N\|_{M_{(2N+1)n} \otimes \Wa} = \|F\|_{M_n \otimes \Ha}.$
Observe first that for any scalars $g_1,\ldots,g_n,$ by Theorem \ref{Thm Paley Littlewood}, with $\tilde\phi_k = \sum_{l = k - 1}^{k + 1} \phi_l,$
\[\| \sum_{i,j = 1}^n g_i f_{ij}(A)x_j \| \cong \| \sum_{i,j = 1}^n \sum_{k \in \Z} \gamma_k \otimes g_i \tilde{u}(f_{ij} \phi_k) x_j \|
\cong \| \sum_{i,j = 1}^n \sum_{k \in \Z} \gamma_k \otimes g_i \tilde{u}(f_{ij} \phi_k) \tilde{u}(\widetilde\phi_k) x_j \|.\]
Replacing $g_i$ by Gaussian variables and taking expectations shows that
\begin{equation}\label{Equ Proof Main Thm 1}
\|\sum_{i,j=1}^n \sum_{k \in \Z} \gamma_i \otimes \tilde{u}(f_{ij}) x_j\|_{\Gauss_n(X)} \cong \| \sum_{i,j=1}^n \sum_{k \in \Z} \gamma_i \otimes \gamma_k \otimes
\tilde{u}(f_{ij}\phi_k) \tilde{u}( \widetilde\phi_k) x_j\|_{\Gauss_n(\Gauss(X))}.
\end{equation}
Further we have
\begin{align}
& \| \sum_{i,j=1}^n \sum_{k=-N}^N \gamma_i \otimes \gamma_k \otimes \tilde{u}(f_{ij} \phi_k) \tilde{u}(\widetilde\phi_k) x_j \|_{\Gauss_n(\Gauss(X))} \nonumber \\
& \cong \| \sum_{i,j,k} \gamma_{ik} \otimes \tilde{u}(f_{ij} \phi_k) \tilde{u}(\widetilde\phi_k) x_j\|_{\Gauss(X)} \nonumber \\
& \lesssim \| F_N \|_{M_{(2N+1)n} \otimes \Wa} \| \sum_{i,k} \gamma_{i,k} \otimes \tilde{u}(\widetilde\phi_k) x_i \|_{\Gauss(X)} \nonumber \\
& \lesssim \|F\|_{M_n \otimes \Ha} \|\sum_i \gamma_i \otimes x_i \|_{\Gauss_n(X)}. \label{Equ Proof Main Thm 2}
\end{align}
Finally taking the supremum over $N \in \N,$ \eqref{Equ Proof Main Thm 1} and \eqref{Equ Proof Main Thm 2} give
\[ \| [f_{ij}(A)] \|_{M_n \otimes_\gamma B(X)} \lesssim \| [f_{ij}] \|_{M_n \otimes \Ha} .\]
In particular, $\|f(A)\| \lesssim \|f\|_{\Ha},$ so that by Lemma \ref{Lem A u}, there exists a bounded mapping $u : \Ha \to B(X)$ extending the $\HI$ calculus in the sense of \eqref{Equ Lem A u 2}.
Now repeat the above argument with an arbitrary $F = [f_{ij}] \in M_n \otimes \Ha$ and $u(f_{ij})$ in place of $f_{ij}(A),$ to deduce that $u$ is matricially $\gamma$-bounded.\\

\noindent
$(2) \Longrightarrow (1).$
Denote $\tilde{u}$ the restriction of $u$ to $\Wa$ which by Lemma \ref{Lem Function spaces} is again matricially $\gamma$-bounded.
Thus also the mapping $v= \tilde{u} \circ w^{-1}$ from the first part of the proof is matricially $\gamma$-bounded and by Theorem \ref{Thm Main 1},
$\| (1 + t^2)^{-\alpha/2} A^{it} x \|_{\gamma(\R,X)} \leq C \|x\|.$
\end{proof}

\section{Extensions and Applications}\label{Sec 5 Examples}

We have characterized in Theorem \ref{Thm Ha} the matricially $\gamma$-bounded H\"ormander calculus in terms of square functions of $A.$
In fact, the imaginary powers $A^{is}$ appearing in these square functions can be replaced by several other typical operator families associated with $A,$
such as resolvents $R(\lambda,A)$ for $\lambda \in \C \backslash [0,\infty)$ and the semigroup generated by $-A,$ $T(z) = \exp(-zA)$ for $\Re z > 0.$
This gives (almost) equivalent conditions, see Proposition \ref{Prop BIP Sgr Res} below.
Subsequently, we use the semigroup condition of this proposition to apply Theorem \ref{Thm Ha} to some examples.
The starting point for us will be semigroups that satisfy (generalized) Gaussian estimates (see \eqref{Equ Gaussian estimate}).

The following lemma serves as a preparation for Proposition \ref{Prop BIP Sgr Res}.

\begin{lem}\label{Lem square function integral transform}
For $i = 1,2,$ let $(\Omega_i,\mu_i)$ be $\sigma$-finite measure spaces and $K \in B(L^2(\Omega_1),L^2(\Omega_2)).$
\begin{enumerate}
\item
Assume that $f \in \gamma(\Omega_1,X)$ and that there exists a Bochner-measurable $g : \Omega_2 \to X$ such that
\[ \spr{g(\cdot)}{x'} = K(\spr{f(\cdot)}{x'}) \quad (x' \in X'). \]
Then $g \in \gamma(\Omega_2,X)$ and
\[ \|g\|_{\gamma(\Omega_2,X)} \leq \|K\|\,\|f\|_{\gamma(\Omega_1,X)}. \]
\item
Let $\Omega_1 \to B(X),\,t \mapsto N(t)$ and $\Omega_2 \to B(X),\,t \mapsto M(t)$ be weakly measurable.
Assume that $\|N(\cdot)x\|_\gamma \leq C \|x\|$ and that there is $K \in B(L^2(\Omega_1),L^2(\Omega_2))$ such that
$K \left[ \spr{N(\cdot)x}{x'} \right] = \spr{M(\cdot)x}{x'}$ for $x \in D,$ where $D$ is some dense subset of $X.$
Then $M(\cdot)x \in \gamma(\Omega_2,X)$ for any $x \in X$ and $\|M(\cdot)x\|_\gamma \lesssim \|N(\cdot)x\|_\gamma.$
\item
Let $(\Omega,\mu)$ be a measure space and $g :\Omega \to X$ measurable.
For $n \in \N,$ let $\varphi_n:\Omega \to [0,1]$ measurable with $\sum_{n = 1}^\infty \varphi_n(t) = 1$ for all $t \in \Omega.$
Then
\[ \|g\|_{\gamma(\Omega,X)} \leq \sum_{n = 1}^\infty \| \varphi_n g \|_{\gamma(\Omega,X)}. \]
\end{enumerate}
\end{lem}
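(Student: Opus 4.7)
The plan is to prove (1) first by identifying $u_g$ with the composition of $u_f$ and the adjoint $K^*$, then derive (2) by applying (1) pointwise in $x$ followed by a density argument, and settle (3) via a dominated convergence argument combined with the Banach-space triangle inequality in $\gamma(L^2(\Omega),X)$.

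For (1), I would unwind the definition \eqref{Equ u_f} of $u_f$ and $u_g$. For $h \in L^2(\Omega_2)$ and $x' \in X'$, moving $K$ to its adjoint yields
\[ \spr{u_g h}{x'} = \int_{\Omega_2} K[\spr{f(\cdot)}{x'}](s)\, h(s)\, d\mu_2(s) = \int_{\Omega_1} \spr{f(t)}{x'}\,(K^\ast h)(t)\, d\mu_1(t) = \spr{u_f(K^\ast h)}{x'}, \]
so that $u_g = u_f \circ K^\ast$, with $K^\ast \in B(L^2(\Omega_2),L^2(\Omega_1))$ and $\|K^\ast\|=\|K\|$. Since $u_f \in \gamma(L^2(\Omega_1),X)$ by hypothesis, Lemma \ref{Lem Folklore square functions} (2) applies and gives $u_g \in \gamma(L^2(\Omega_2),X)$ together with the estimate $\|g\|_{\gamma(\Omega_2,X)} \leq \|K\|\,\|f\|_{\gamma(\Omega_1,X)}$.

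For (2), I apply (1) pointwise in $x$. For $x \in D$, set $f(t) := N(t)x$ and $g(s) := M(s)x$; the coupling assumption is precisely the hypothesis of (1), and one obtains $\|M(\cdot)x\|_{\gamma(\Omega_2,X)} \leq \|K\|\,\|N(\cdot)x\|_{\gamma(\Omega_1,X)} \lesssim \|x\|$. For arbitrary $x \in X$, pick a sequence $x_n \in D$ with $x_n \to x$; by boundedness of $M(s)$, $M(s)x_n \to M(s)x$ pointwise. Combined with the already-established uniform bound $\|M(\cdot)x_n\|_{\gamma} \lesssim \|x_n\|$, Lemma \ref{Lem Folklore square functions} (1) then yields $\|M(\cdot)x\|_{\gamma} \leq \liminf_n \|M(\cdot)x_n\|_{\gamma} \lesssim \|x\|$.

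For (3), I may assume the right-hand side is finite, otherwise there is nothing to prove. Then the series $\sum_n u_{\varphi_n g}$ converges absolutely in the Banach space $\gamma(L^2(\Omega),X)$; call the limit $U$. For every $h \in L^2(\Omega)$ and $x' \in X'$ the partial sums $\sum_{n=1}^N \varphi_n(t) \spr{g(t)}{x'} h(t)$ increase in modulus no faster than the integrable majorant $|\spr{g(t)}{x'} h(t)|$ (integrable because $\spr{g(\cdot)}{x'} \in L^2(\Omega)$ and $h \in L^2(\Omega)$), so dominated convergence gives $\spr{Uh}{x'} = \int_\Omega \spr{g(t)}{x'} h(t)\, d\mu(t) = \spr{u_g h}{x'}$. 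Hence $U = u_g$ in $\gamma(L^2(\Omega),X)$, and the claimed inequality is the triangle inequality applied to this convergent series.

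The only mildly delicate point is the density step in (2), where one needs Bochner measurability of $M(\cdot)x$ for all $x \in X$ (not only $x \in D$) to invoke Lemma \ref{Lem Folklore square functions} (1); this follows from the weak measurability hypothesis together with the fact that $X$, having property $(\alpha)$, contains no copy of $c_0$, after restricting attention to the separable subspace generated by $\{M(s)x_n : s \in \Omega_2,\,n \in \N\}$ via the Pettis measurability theorem.
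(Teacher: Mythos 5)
Your part (1) is correct and proceeds exactly as the paper does ($u_g = u_f \circ K^\ast$, then Lemma \ref{Lem Folklore square functions}~(2)). For parts (2) and (3), however, there is a common gap that you have not closed. In both places you invoke machinery that requires the target function to lie in $P_2(\Omega,X)$, i.e.\ to be Bochner measurable \emph{and} to satisfy $\langle\cdot,x'\rangle \in L^2(\Omega)$ for every $x'\in X'$; but you only address the first half.

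In (2), applying Lemma \ref{Lem Folklore square functions}~(1) to the sequence $M(\cdot)x_n \to M(\cdot)x$ requires $M(\cdot)x \in P_2(\Omega_2,X)$ as an a priori hypothesis. You flag the Bochner measurability but never verify that $\spr{M(\cdot)x}{x'} \in L^2(\Omega_2)$. The paper establishes this by showing that $\spr{M(\cdot)x_n}{x'}$ is Cauchy in $L^2(\Omega_2)$, using $\| \spr{M(\cdot)(x_n - x_m)}{x'} \|_{L^2} = \|K[\spr{N(\cdot)(x_n-x_m)}{x'}]\|_{L^2} \lesssim \|x_n - x_m\|\,\|x'\|$; the pointwise limit $\spr{M(\cdot)x}{x'}$ is then identified with the $L^2$-limit. (Equivalently, one could replace the Cauchy argument with a scalar Fatou argument on $\int |\spr{M(t)x}{x'}|^2\,d\mu_2$; the point is that some argument is needed.)

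In (3), your route is genuinely different from the paper's and rather elegant: you prove absolute convergence of $\sum_n u_{\varphi_n g}$ in the Banach space $\gamma(L^2(\Omega),X)$ and identify the limit with $u_g$, whereas the paper applies the $\gamma$-Fatou lemma to the partial-sum multipliers $\phi_N = \sum_{k\leq N}\varphi_k$. But when you claim $|\spr{g(t)}{x'} h(t)|$ is an integrable majorant ``because $\spr{g(\cdot)}{x'}\in L^2(\Omega)$'', you are assuming precisely the unproved fact $g\in P_2(\Omega,X)$ (the hypothesis only gives $g$ measurable). As in (2), this is the crux of the argument: the paper gets $\spr{g(\cdot)}{x'}\in L^2(\Omega)$ from Beppo Levi applied to the monotone sequence $\phi_N|\spr{g}{x'}|$, together with the estimate $\|\spr{f(\cdot)}{x'}\|_{L^2}\leq\|f\|_{\gamma(\Omega,X)}\|x'\|$ applied to $f=\phi_N g$. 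Once you insert that step, your dominated convergence argument goes through and gives a clean alternative to the paper's conclusion via Lemma \ref{Lem Folklore square functions}~(1). One further remark: the separable subspace in your closing sentence should be the closed span of the countable union of essential ranges of the functions $M(\cdot)x_n$ for $n\in\N$, not the set $\{M(s)x_n:\,s\in\Omega_2,\,n\in\N\}$ itself, which need not be separable a priori.
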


\begin{proof}
(1) By assumption, $g \in P_2(\Omega_2,X).$
Consider the associated operator $u_g : H \to X$ as in \eqref{Equ u_f}.
We have $u_g = u_f \circ K'.$
Thus, by Lemma \ref{Lem Folklore square functions}, $\|u_g\|_{\gamma(L^2(\Omega_2),X)} \leq \|K'\|\,\|u_f\|_{\gamma(L^2(\Omega_1),X)},$ which proves (1).\\

\noindent
(2) We show first that $M(\cdot)x$ belongs to $P_2(\Omega_2,X)$ for any $x \in X.$
For $x \in D,$ this follows immediately from the assumption.
For $x \in X,$ we let $x_n \in D$ such that $x_n \to x$ $(n \to \infty).$
Then $\spr{M(t)x}{x'} = \lim_n \spr{M(t)x_n}{x'}$ for any $t \in \Omega_2.$
On the other hand, $\spr{M(\cdot)x_n}{x'}$ is convergent in $L^2(\Omega_2).$
Indeed,
\begin{align*}
\| \spr{M(\cdot)(x_n - x_m)}{x'} \|_{L^2(\Omega_2)} & = \|K\left[ \spr{N(\cdot)(x_n - x_m)}{x'} \right] \|_{L^2(\Omega_1)} \\
& \lesssim \| N(\cdot)(x_n - x_m)\|_\gamma \, \|x'\| \lesssim \|x_n - x_m\| \, \|x'\|,
\end{align*}
which converges to $0\: (n,m \to \infty).$
Thus, $\spr{M(\cdot)x}{x'}$ the pointwise limit, so necessarily equal to the $L^2$ limit, belongs to $L^2(\Omega_2).$
Consequently, by (1) and Lemma \ref{Lem Folklore square functions}, $\|M(\cdot)x\|_\gamma = \lim_n \|M(\cdot)x_n\|_\gamma \leq \|K\| \lim_n \|N(\cdot)x_n\|_\gamma = \|K\| \,\|N(\cdot)x\|_\gamma,$ which shows (2).\\

\noindent
(3) For $n \in \N,$ put $\phi_n = \sum_{k=1}^n \varphi_k.$
Then $\phi_n: \Omega \to [0,1]$ and $\phi_n(t) \to 1$ monotonically for all $t \in \Omega.$
Then $\sup_n \|\phi_n g\|_\gamma \leq \sup_n \sum_{k=1}^n \| \varphi_k g\|_\gamma = \sum_{k=1}^\infty \|\varphi_k g \|_\gamma.$
It remains to show $\|g\|_\gamma \leq \sup_n \| \phi_n g\|_\gamma.$
Let us show first that $g \in P_2(\Omega,X),$ i.e. for any $x' \in X',\, \spr{g(\cdot)}{x'} \in L^2(\Omega).$
By assumption, we have $|\spr{g(t)}{x'}| = \lim_n \phi_n(t) |\spr{g(t)}{x'}|$ for any $t \in \Omega,$ and this convergence is monotone.
Then by Beppo Levi's theorem,
\[\|\spr{g(\cdot)}{x'}\|_{L^2(\Omega)} = \lim_n \|\spr{\phi_n(\cdot) g(\cdot)}{x'}\|_{L^2(\Omega)}
\leq \limsup_n \|\phi_n \cdot g\|_{\gamma(\Omega,X)} \|x'\|.\]
where we have used that $\|\spr{f(\cdot)}{x'}\|_{L^2(\Omega)} \leq \|f\|_{\gamma(\Omega,X)} \, \|x'\|$ for any $f \in \gamma(\Omega,X).$
Thus we have shown that $g \in P_2(\Omega,X).$
Then by Lemma \ref{Lem Folklore square functions},
\[\|g\|_{\gamma(\Omega,X)} \leq \liminf_n \|\phi_n \cdot g\|_{\gamma(\Omega,X)} \leq \sup_n \|\phi_n \cdot g\|_{\gamma(\Omega,X)}.\]
\end{proof}

\begin{prop}\label{Prop BIP Sgr Res}
Let $A$ be a $0$-sectorial operator having a bounded $\HI$ calculus on some space $X$ with property $(\alpha).$
Let $\alpha > \frac12.$
Consider the following conditions.
\[
\begin{array}{ll}
& \textit{H\"ormander functional calculus}\\
(1) & \text{The }\HI\text{ calculus of }A\text{ extends to a matricially }\gamma\text{-bounded mapping }\Ha \to B(X).\vspace{0.2cm}\\
& \textit{Imaginary powers}\\
(2) &\| (1 + t^2)^{-\alpha/2} A^{it} x \|_{\gamma(\R,X)} \leq C \|x\|.\vspace{0.2cm}\\
& \textit{Resolvents}\\
(3)&\text{For some }\beta \in (0,1)\text{ and }\theta \in (-\pi,\pi) \backslash \{ 0 \} :\: \| t^\beta A^{1 - \beta} R(e^{i\theta} t,A) x \|_{\gamma(\R_+,dt/t,X)} \lesssim |\theta|^{-\alpha} \|x\|.\\
(4)&\text{For some }\beta \in (0,1),\:\theta_0 \in (0,\pi] :\: \| \, \|\theta|^{\alpha - \frac12} t^\beta A^{1 - \beta}R(e^{i \theta} t,A)x \|_{\gamma(\R_+ \times [-\theta_0,\theta_0], dt/t d\theta,X)} \lesssim \|x\|.\vspace{0.2cm}\\
& \textit{Analytic semigroup}\\
(5)&\text{For }\theta \in (-\frac{\pi}{2},\frac{\pi}{2}),\:\|A^{1/2}T(e^{i\theta}t)x\|_{\gamma(\R_+,dt,X)} \lesssim (\frac{\pi}{2}-|\theta|)^{-\alpha} \|x\|.\\
(6)& \|( 1+ \left| \frac{a}{b} \right|^2)^{\alpha/2} |a|^{-\frac12} A^{1/2} T(a+ib)x\|_{\gamma(\R_+\times \R,da db,X)} \lesssim \|x\|.
\end{array}
\]
Then the conditions (1), (2), (4), (6) are equivalent.
Further these conditions imply the remaining ones (3), (5), which conversely imply that the $\HI$ calculus of $A$ extends to a matricially $\gamma$-bounded homomorphism $\Hae \to B(X)$ for any $\epsilon > 0.$
\end{prop}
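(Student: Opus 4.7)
The plan builds on Theorem~\ref{Thm Ha}, which already provides (1)~$\Leftrightarrow$~(2), and on Lemma~\ref{Lem square function integral transform}(2), which transfers a square function estimate from one operator family $N(\cdot)$ to another $M(\cdot)$ as soon as their scalar pairings are related by an $L^2$-bounded integral transform $K$. The remaining equivalences therefore reduce to producing, for each pair of families among $\{A^{it}\}$, $\{R(e^{i\theta}t,A)\}$, $\{T(e^{i\theta}t)\}$, an explicit integral representation of one in terms of the other, and to bounding the induced scalar kernel between suitably weighted $L^2$ spaces.

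The natural tools are Mellin-type identities:
\begin{equation*}
A^{-s}x = \frac{1}{\Gamma(s)}\int_0^\infty t^{s-1}T(t)x\,dt,\qquad T(z)x = \frac{1}{2\pi i}\int_{\Re s=c}\Gamma(s)\,z^{-s}A^{-s}x\,ds,
\end{equation*}
together with $R(\lambda,A)=\int_0^\infty e^{-\lambda t}T(t)\,dt$, continued to the cut plane by sectoriality. After the logarithmic substitution $t=e^u$, each of these becomes a Fourier--Plancherel transform on $L^2(\R)$, and Stirling's asymptotics for $|\Gamma(c+is)|$ produce both the correct decay in $s$ and exactly the angular singularities $|\theta|^{-\alpha}$ and $(\pi/2-|\theta|)^{-\alpha}$ appearing in (3)--(6). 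Using this, I would close the cycle (2)~$\Leftrightarrow$~(4)~$\Leftrightarrow$~(6) by composing a Mellin transform in the radial variable with a weighted $L^2$ isomorphism in the angular variable, and then invoking Lemma~\ref{Lem square function integral transform}(2) in each direction; combined with Theorem~\ref{Thm Ha}, this yields the full equivalence (1)~$\Leftrightarrow$~(2)~$\Leftrightarrow$~(4)~$\Leftrightarrow$~(6).

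The one-sided statements are then handled as follows. Specializing a two-variable estimate to a fixed angle yields (4)~$\Rightarrow$~(3) and (6)~$\Rightarrow$~(5) directly. Conversely, starting from a single-angle estimate as in (3) or (5), I would integrate against an angular weight $|\theta|^{\alpha+\epsilon-1/2}$, respectively $(\pi/2-|\theta|)^{\alpha+\epsilon}$; the extra $\epsilon$ renders the weight square-integrable near the critical angle and promotes the bound to a genuine estimate of type (4) or (6) with $\alpha$ replaced by $\alpha+\epsilon$. The equivalence already established then yields the matricially $\gamma$-bounded $\Hae \to B(X)$ calculus.

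The main obstacle I expect is the delicate matching of weights under the Mellin/Fourier isomorphisms: the contour $\Re s = c$ must be chosen so that Stirling's decay of $|\Gamma(c+is)|$ combines with the weight $(1+s^2)^{-\alpha/2}$ on the imaginary-powers side to reproduce, after Plancherel, exactly the weights in (4) and (6). A secondary technicality is that the integral identities hold initially only on the dense range of the $\HI_0(\Sigma_\omega)$ calculus and must be propagated to all of $X$ via Lemma~\ref{Lem Convergence Lemma} together with Lemma~\ref{Lem square function integral transform}(2) before the square function conclusion can be drawn.
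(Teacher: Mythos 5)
Your identification of the skeleton is correct and matches the paper: the equivalence $(1)\Leftrightarrow(2)$ is Theorem~\ref{Thm Ha}; the cross-over between imaginary powers, resolvents, and the analytic semigroup is handled by writing each family as an $L^2$-bounded integral transform of another and invoking Lemma~\ref{Lem square function integral transform}(2); and the relevant scalar kernels are Mellin/Fourier kernels whose $\Gamma$-type asymptotics produce the angular weights. (The paper works with the kernel $\pi/\sin\pi(\beta+is)$, which is $\Gamma(\beta+is)\Gamma(1-\beta-is)$, so your Stirling-based bookkeeping and the paper's are morally the same.)

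The gap is in the closing step, from $(3)$ or $(5)$ back to a matricially $\gamma$-bounded $\Hae$-calculus. You propose to integrate the single-angle estimate $(3)$ against an angular weight $|\theta|^{\alpha+\epsilon-1/2}$ and thereby ``promote'' it to the joint two-variable estimate $(4)$ with $\alpha+\epsilon$. This does not go through: the norm $\|\cdot\|_{\gamma(\R_+\times[-\theta_0,\theta_0],dt/t\,d\theta,X)}$ is not an integral in $\theta$ of the one-variable norms $\|\cdot\|_{\gamma(\R_+,dt/t,X)}$, and there is no Fubini-type inequality for $\gamma$-spaces (even under property $(\alpha)$) that would let you assemble a continuum of single-angle square function bounds into a joint square function bound. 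Lemma~\ref{Lem square function integral transform}(3) only covers a countable partition of unity and still produces two-variable $\gamma$-norms after localization, not one-variable ones. The same objection applies, in the opposite direction, to your claim that $(4)\Rightarrow(3)$ and $(6)\Rightarrow(5)$ follow ``directly by specializing to a fixed angle'': restricting a two-variable $\gamma$-norm does not give control of the one-variable $\gamma$-norm at a fixed value of the frozen parameter, and the paper in fact never proves $(4)\Rightarrow(3)$ that way (it goes through $(2)\Rightarrow(3)$ separately).

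The paper's actual route from $(3)$ (at level $\alpha$) is directly to $(2)$ at level $\alpha+\epsilon$, by a dyadic decomposition in the Mellin variable $s$: one splits $[1,\infty)$ into blocks $[2^n,2^{n+1}]$, notes that on such a block the weight $\langle s\rangle^{-\alpha}$ is dominated by the Mellin kernel $(\pi-\theta_n)^\alpha\,e^{\theta_n s}/|\sin\pi(\beta+is)|$ at the single well-chosen angle $\theta_n=\pi-2^{-n}$, applies the single-angle resolvent estimate $(3)$ at $\theta_n$ via Lemma~\ref{Lem square function integral transform}(2), and uses the triangle inequality for $\gamma$-norms (Lemma~\ref{Lem square function integral transform}(3)) together with the factor $2^{-n\epsilon}$ to sum. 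The $|s|\le 1$ piece is handled separately from the $\gamma$-boundedness of $\{A^{is}:|s|<1\}$, which follows from the bounded $\HI$ calculus and property $(\alpha)$. You will need this dyadic argument (or an equivalent replacement of the angular integration) to make your last step rigorous.
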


\begin{proof}
$(1) \Longleftrightarrow (2).$\\
This is Theorem \ref{Thm Ha}.\\

\noindent
$(2) \Longleftrightarrow (4).$\\
Consider
\begin{equation}\label{Equ Proof Charact K}
 K : L^2(\R,ds) \to L^2(\R \times (-\pi,\pi),ds d\theta),\,
f(s) \mapsto (\pi-|\theta|)^{\alpha-\frac12}\frac{1}{\sin \pi(\beta + is)} e^{\theta s} \langle s \rangle^{\alpha} f(s),
\end{equation}
where we write in short
\[ \langle s \rangle = (1 + |s|^2)^{\frac12}.\]
Note that $|\sin \pi(\beta + is)| \cong \cosh(\pi s)$ for $\beta \in (0,1)$ fixed.
$K$ is an isomorphic embedding.
Indeed,
\[ \|Kf\|_2^2 =
\int_\R \int_{-\pi}^\pi \left((\pi-|\theta|)^{\alpha-\frac12} e^{\theta s}\right)^2 d\theta \frac{1}{|\sin^2(\pi(\beta + i s))|} \langle s \rangle^{2\alpha} |f(s)|^2 ds\]
and
\begin{align}
\int_{-\pi}^\pi (\pi-|\theta|)^{2\alpha -1} e^{2\theta s} d\theta
& \cong \int_0^\pi \theta^{2\alpha -1} e^{2(\pi - \theta) |s|} d\theta
\nonumber \\
& \cong \cosh^2(\pi s) \int_0^\pi \theta^{2\alpha-1} e^{2\theta |s|} d\theta.
\nonumber
\end{align}
The last integral is bounded from below uniformly in $s \in \R,$ and for $|s|\geq 1,$
\[\int_0^\pi \theta^{2\alpha -1} e^{2\theta |s|} d\theta = (2|s|)^{-2\alpha} \int_0^{2|s|\pi} \theta^{2\alpha -1} e^{\theta}d\theta \cong |s|^{-2\alpha}.\]
This clearly implies that $\|Kf\|_2 \cong \|f\|_2.$
Applying Lemma \ref{Lem square function integral transform}, we get
\[ \| \langle s \rangle^{-\alpha} A^{is}x \|_{\gamma(\R,ds,X)}
\cong \left\|(\pi-|\theta|)^{\alpha-\frac12} \frac{1}{\cosh (\pi s)} e^{\theta s}A^{is}x \right\|_{\gamma(\R \times (-\pi,\pi),ds d\theta,X)}.\]
In \cite[p. 228 and Theorem 15.18]{KuWe}, the following formula is derived for $x \in A(D(A^2))$ and $|\theta| < \pi:$
\begin{equation}\label{Equ Res A Mellin BIP}
\frac{\pi}{\sin \pi (\beta + i s)} e^{\theta s} A^{is} x =
\int_0^\infty t^{is}\left[t^{\beta} e^{i\theta \beta} A^{1 - \beta}(e^{i\theta}t+A)^{-1}x\right] \frac{dt}{t}.
\end{equation}
Note that $A(D(A^2))$ is a dense subset of $X.$
As the Mellin transform $f(s) \mapsto \int_0^\infty t^{is} f(s) \frac{ds}{s}$ is an isometry $L^2(\R_+,\frac{ds}s)\to L^2(\R,dt),$ we get by Lemma \ref{Lem square function integral transform} (2)
\begin{align}
\| \langle s \rangle^{-\alpha} A^{is} x\|_{\gamma(\R,X)}
& \cong \|(\pi -|\theta|)^{\alpha-\frac12} t^{\beta}A^{1-\beta}(e^{i\theta}t+A)^{-1} x\|_{\gamma(\R_+\times(-\pi,\pi),\frac{dt}{t}d\theta,X)}
\nonumber \\
& \cong \| |\theta|^{\alpha-\frac12} t^\beta A^{1 - \beta} R(e^{i\theta}t,A) x\|_{\gamma(\R_+ \times (0,2\pi),dt/t d\theta,X)}.
\nonumber
\end{align}

so that (2) $\Leftrightarrow$ (4) for $\theta_0 = \pi.$

For a general $\theta_0 \in (0,\pi],$ consider $K$ from \eqref{Equ Proof Charact K} with restricted image, i.e.
\[K : L^2(\R,ds) \to L^2(\R \times (-\pi,-(\pi-\theta_0)] \cup [\pi-\theta_0,\pi),ds d\theta).\]
Then argue as in the case $\theta_0 = \pi.$\\

\noindent
(4) $\Longleftrightarrow$ (6).\\
The proof of (2) $\Leftrightarrow$ (4) above shows that condition (4) is independent of $\theta_0 \in (0,\pi]$ and $\beta \in (0,1).$
Put $\theta_0 = \pi$ and $\beta = \frac12.$
The equivalence follows again from Lemma \ref{Lem square function integral transform},
using the fact that for $\theta \in (-\frac{\pi}{2},\frac{\pi}{2})$ and $\mu > 0,$
\begin{equation}\label{Equ Proof BIP Sgr Res}
(e^{i\theta} \mu + it)^{-1} = K[\exp(-(\cdot)e^{i\theta} \mu)\chi_{(0,\infty)}(\cdot)](t),
\end{equation}
where $K : L^2(\R,ds) \to L^2(\R,dt)$ is the Fourier transform.\\

\noindent
(3) $\Longleftrightarrow$ (5) for $\beta = \frac12.$\\
We use the same argument as right above.\\

\noindent
(2) $\Longrightarrow$ (3).\\
We use a similar $K_\theta$ as in the proof of $(2) \Leftrightarrow (4),$ fixing $\theta \in (-\pi,\pi):$
\[ K_\theta:  L^2(\R,ds) \to L^2(\R,ds),\, f(s) \mapsto (\pi - |\theta|)^{\alpha} \frac{1}{\sin\pi (\beta + is)}  e^{\theta s} \langle s \rangle^{\alpha} f(s). \]
We have
\[\sup_{|\theta| < \pi} \|K_\theta\| = \sup_{|\theta| < \pi,\,s\in\R} \langle s \rangle^\alpha (\pi-|\theta|)^\alpha
\frac{e^{\theta s}}{|\sin \pi (\beta + is)|} \lesssim \sup_{\theta,s} \langle s(\pi-|\theta|) \rangle^\alpha e^{-|s|(\pi-|\theta|)} < \infty.\]
Thus, by \eqref{Equ Res A Mellin BIP},
\begin{align}
\sup_{0 < |\theta| \leq \pi} |\theta|^{\alpha} \|t^\beta A^{1-\beta}R(te^{i\theta},A)x\|_{\gamma(\R_+,dt/t,X)}
& = \sup_{|\theta| < \pi} (\pi-|\theta|)^{\alpha} \|t^{\beta} A^{1 - \beta} (e^{i\theta} t +A)^{-1}x\|_{\gamma(\R_+,\frac{dt}{t},X)}
\nonumber \\
& = \sup_{|\theta| < \pi} (\pi-|\theta|)^{\alpha} \| \frac{\pi}{\sin\pi (\beta + is)}e^{\theta s}A^{is}x \|_{\gamma(\R,ds,X)}
\label{Equ Proof Thm Charact} \\
& \lesssim \|\langle s \rangle^{-\alpha} A^{is}x\|_{\gamma(\R,ds,X)}.
\nonumber
\end{align}

\noindent
(3), $\alpha$ $\Longrightarrow$ (2), $\alpha + \epsilon.$\\
First we consider $\langle s \rangle^{-(\alpha+\epsilon)}A^{is}x$ for $s\geq 1.$
\begin{align}
\|\langle s \rangle^{-(\alpha+\epsilon)}A^{is}x\|_{\gamma([1,\infty),X)}
& \leq \sum_{n = 0}^\infty 2^{-n\epsilon} \| \langle s \rangle^{-\alpha} A^{is}x\|_{\gamma([2^n,2^{n+1}],X)}.
\label{Equ Proof Thm Charact 2}
\end{align}
For $s \in [2^n,2^{n+1}],$ we have
\[\langle s \rangle^{-\alpha} \lesssim 2^{-n\alpha} \lesssim 2^{-n\alpha} e^{-2^{-n}s} \lesssim (\pi-\theta_n)^\alpha \frac{e^{\theta_n s}}{\sin \pi(\beta + i s)} ,\]
where $\theta_n = \pi - 2^{-n}.$
Therefore
\begin{align}
\| \langle s \rangle^{-\alpha} A^{is}x\|_{\gamma([2^n,2^{n+1}],X)}
& \lesssim (\pi-\theta_n)^{\alpha} \| \frac{\pi}{\sin\pi (\beta + is)}e^{\theta_n s}A^{is}x \|_{\gamma(\R,X)} \nonumber \\
& \overset{\eqref{Equ Proof Thm Charact}}{\lesssim} \sup_{0<|\theta| \leq \pi} |\theta|^{\alpha} \| t^\beta A^{1 - \beta}R(te^{i\theta},A) x\|_{\gamma(\R_+,dt/t,X)} < \infty.
\nonumber
\end{align}
Thus, the sum in \eqref{Equ Proof Thm Charact 2} is finite.

The part $\langle s \rangle^{-(\alpha + \epsilon)}A^{is}x$ for $s \leq -1$ is treated similarly,
whereas $\|\langle s \rangle^{-\alpha}A^{is}x\|_{\gamma((-1,1),X)} \cong \|A^{is}x\|_{\gamma((-1,1),X)}.$
It remains to show that the last expression is finite.
We have assumed that $X$ has property $(\alpha).$
Then the fact that $A$ has an $\HI$ calculus implies that $\{A^{is}:\: |s| < 1\}$ is $\gamma$-bounded \cite[Corollary 6.6]{KaW2}.
Then by Lemma \ref{Lem Folklore square functions} (3), we have $\|A^{is}x\|_{\gamma((-1,1),X)} \leq \gamma\left(\left\{ A^{is} : |s| < 1 \right\}\right) \|1\|_{L^2(-1,1)} \|x\|.$
\end{proof}

Condition (5) of the preceding proposition can be checked in the following way.

\begin{lem}\label{Lem Sgr}
Let $A$ be a $0$-sectorial operator on a space $X$ with property $(\alpha)$ having an $\HI$ calculus.
If for some $\beta > 0$
\begin{equation}\label{Equ Lem Sgr}
\left\{ T \left(te^{\pm i(\frac{\pi}{2} - \theta)} \right) :\: t > 0\right\}\text{ is }\gamma\text{-bounded with constant }\lesssim \theta^{-\beta},
\end{equation}
then $\| A^{\frac12} T(e^{\pm i (\frac{\pi}{2} - \theta)}t) x \|_{\gamma(\R_+,X)} \lesssim \theta^{-\alpha} \|x\|$ with $\alpha = \beta + \frac12.$
\end{lem}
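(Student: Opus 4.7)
The plan is to split the complex time $e^{i\phi}t$ where $\phi=\pi/2-\theta$ (the $-$ case being symmetric under complex conjugation) as a sum of a short positive real increment and a complex increment still lying strictly inside $\Sigma_{\pi/2}$. The real part will be handled by the classical continuous square function estimate for operators with bounded $\HI$ calculus, while the complex part is absorbed into a $\gamma$-bounded family via hypothesis \eqref{Equ Lem Sgr}; both are then combined through Lemma~\ref{Lem Folklore square functions}~(3).

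Concretely, set $a=\tfrac{1}{2}\sin\theta>0$ and $w=\tfrac{1}{2}\sin\theta+i\cos\theta$, so that $e^{i\phi}=\sin\theta+i\cos\theta=a+w$ and $\Re(wt)=\tfrac{1}{2}t\sin\theta>0$. Since $A^{1/2}$ commutes with each $T(z)$ and the semigroup law holds on $\Sigma_{\pi/2}$,
\[A^{1/2}T(e^{i\phi}t)x=T(wt)\cdot A^{1/2}T(at)x.\]
The factor $w$ has $|w|=\sqrt{1-\tfrac{3}{4}\sin^2\theta}\in[1/2,1]$ and $\arg w=\tfrac{\pi}{2}-\theta'$ with $\theta'=\arctan(\tfrac{1}{2}\tan\theta)$. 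The identity $\tfrac{1}{2}\tan\theta=\tan(\theta/2)/(1-\tan^2(\theta/2))\geq\tan(\theta/2)$ gives $\theta'\geq\theta/2$, while $\theta'\leq\theta$ is immediate. Reparametrizing $s=|w|t$ identifies $\{T(wt):t>0\}$ with $\{T(se^{i(\pi/2-\theta')}):s>0\}$, so hypothesis \eqref{Equ Lem Sgr} at angle $\theta'$ yields
\[\gamma\{T(wt):t>0\}\lesssim(\theta')^{-\beta}\lesssim\theta^{-\beta}.\]

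For the other factor, the classical continuous square function estimate
\[\|A^{1/2}T(u)x\|_{\gamma(\R_+,du,X)}\lesssim\|x\|\]
follows from the bounded $\HI$ calculus on a space with property $(\alpha)$: apply the $\HI_0$ square function to $g(\lambda)=\sqrt{\lambda}e^{-\lambda}$ (so $g(uA)=\sqrt{u}\,A^{1/2}T(u)$) and convert the dyadic measure $du/u$ into $du$ via the obvious rescaling isometry. Since the substitution $u=at$ induces a bounded map $L^2(\R_+,dt)\to L^2(\R_+,du)$ of norm $a^{-1/2}$, Lemma~\ref{Lem Folklore square functions}~(2) gives
\[\|A^{1/2}T(at)x\|_{\gamma(\R_+,dt,X)}\lesssim a^{-1/2}\|x\|\lesssim\theta^{-1/2}\|x\|.\]
Combining via Lemma~\ref{Lem Folklore square functions}~(3) applied to $N(t)=T(wt)$ and $f(t)=A^{1/2}T(at)x$ then produces
\[\|A^{1/2}T(e^{i\phi}t)x\|_{\gamma(\R_+,dt,X)}\leq\gamma\{T(wt):t>0\}\cdot\|A^{1/2}T(at)x\|_{\gamma(\R_+,dt,X)}\lesssim\theta^{-\beta}\cdot\theta^{-1/2}\|x\|=\theta^{-\alpha}\|x\|.\]

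The main obstacle is finding a splitting $e^{i\phi}=a+w$ that simultaneously keeps $a$ comparable to $\sin\theta$ (so the Jacobian $a^{-1/2}\asymp\theta^{-1/2}$ is controlled, producing the precise gap $\alpha-\beta=\tfrac{1}{2}$) and keeps $\arg w$ bounded away from $\pi/2$ by an amount comparable to $\theta$ (so the hypothesis contributes only $\theta^{-\beta}$). A naive split like $e^{i\phi}=e^{i\phi}/2+e^{i\phi}/2$ fails the first condition, while the maximal real split $a=\cos\phi=\sin\theta$ leaves $w$ purely imaginary on the boundary of $\Sigma_{\pi/2}$ and fails the second. Halving the real part, $a=\tfrac{1}{2}\sin\theta$, threads both constraints simultaneously.
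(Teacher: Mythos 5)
Your proof is correct and matches the paper's argument: both decompose $te^{\pm i(\pi/2-\theta)}$ into a positive real increment $\asymp \theta\, t$ plus a complex increment whose argument is separated from $\pm\pi/2$ by $\asymp\theta$, then combine the $\HI$-calculus square function for the semigroup along $\R_+$ (yielding the change-of-variable factor $\asymp\theta^{-1/2}$) with the hypothesized $\gamma$-bound at the intermediate angle (yielding $\theta^{-\beta}$), assembled via Lemma~\ref{Lem Folklore square functions}. The only difference is cosmetic: the paper fixes the outgoing angle at $\pi/2-\theta/2$ (so the law of sines gives real increment $t\tan(\theta/2)$), whereas you fix the real increment at $\tfrac12 t\sin\theta$ and read off the outgoing angle $\theta'\in[\theta/2,\theta]$.
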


\begin{proof}
Decompose
\[ t \exp \left(\pm i (\frac{\pi}{2} - \theta)\right) = s(t,\theta) + r(t,\theta) \exp\left(\pm i (\frac{\pi}{2} - \frac{\theta}{2})\right) \]
where the reals $s$ and $r$ are uniquely determined by $t$ and $\theta.$
We have $s(t,\theta) = \kappa(\theta) t$ with $\kappa(\theta) \cong \theta.$
Then by Lemma \ref{Lem Folklore square functions},
\begin{align*}
\| A^{\frac12} T(t e^{\pm i (\frac{\pi}{2} - \theta)}) x \|_{\gamma(\R_+,X)} & = \| T(r e^{\pm i (\frac{\pi}{2} - \frac{\theta}{2})}) A^{\frac12} T(s) x\|_{\gamma(\R_+,X)} \\
& \leq \gamma\left(\left\{ T(r e^{\pm i (\frac{\pi}{2} - \frac{\theta}{2})}) :\: r > 0 \right\} \right) \| A^{\frac12} T(s(t,\theta)) x \|_{\gamma(\R_+,X)} \\
& \lesssim (\theta/2)^{-\beta} \theta^{-\frac12} \| A^{\frac12} T(t) x \|_{\gamma(\R_+,X)}.
\end{align*}
By \eqref{Equ Proof BIP Sgr Res} and \cite[Theorem 7.2]{KaW2}, $\| A^{\frac12} T(t) x\|_{\gamma(\R_+,X)} = \| A^{\frac12} (it - A)^{-1} x\|_{\gamma(\R,X)} \leq C \|x\|,$ which finishes the proof.
\end{proof}

Let us now turn to some examples.

\begin{defi}
Let $\Omega$ be a topological space which is equipped with a distance $\rho$ and a Borel measure $\mu.$
Let $d \geq 1$ be an integer.
$\Omega$ is called a homogeneous space\index{homogeneous space} of dimension $d$ if there exists $C > 0$ such that for any $x \in \Omega,\,r> 0$ and $\lambda \geq 1:$
\[\mu(B(x,\lambda r)) \leq C \lambda^d \mu(B(x,r)).\]
\end{defi}

Typical cases of homogeneous spaces are open subsets of $\R^d$ with Lipschitz boundary and Lie groups with polynomial volume growth, in particular stratified nilpotent Lie groups (see e.g. \cite{FoSt}).

We will consider operators satisfying the following assumption.

\begin{assumption}\label{Ass Examples}
$A$ is a self-adjoint positive (injective) operator on $L^2(\Omega),$ where $\Omega$ is a homogeneous space of a certain dimension $d.$
Further, there exists some $p_0 \in [1,2)$ such that the semigroup generated by $-A$ satisfies the so-called generalized Gaussian estimate
(see e.g. \cite[(GGE)]{Bluna}):
\begin{equation}\tag{GGE}\label{Equ generalized Gaussian estimate}
\| \chi_{B(x,r_t)} e^{-tA} \chi_{B(y,r_t)} \|_{p_0 \to p_0'} \leq C \mu(B(x,r_t))^{\frac{1}{p_0'}-\frac{1}{p_0}} \exp \left(-c \left(\rho(x,y)/r_t \right)^\frac{m}{m-1} \right) \quad (x,y \in \Omega,\,t>0).
\end{equation}
Here, $p_0'$ is the conjugated exponent to $p_0,\,C,c > 0,\,m \geq 2$ and $r_t = t^{\frac1m},$ $\chi_B$ denotes the characteristic function of $B,$ $B(x,r)$ is the ball $\{y \in \Omega:\: \rho(y,x) < r\}$
and $\|\chi_{B_1} T \chi_{B_2}\|_{p_0 \to p_0'} = \sup_{\|f\|_{p_0} \leq 1} \|\chi_{B_1} \cdot T(\chi_{B_2} f)\|_{p_0'}.$
\end{assumption}

If $p_0 = 1,$ then it is proved in \cite{BlKub} that \eqref{Equ generalized Gaussian estimate} is equivalent to the usual Gaussian estimate,
i.e. $e^{-tA}$ has an integral kernel $k_t(x,y)$ satisfying the pointwise estimate (cf. e.g. \cite[Assumption 2.2]{DuOS})
\begin{equation}\tag{GE}\label{Equ Gaussian estimate}
|k_t(x,y)| \lesssim \mu(B(x,t^{\frac1m}))^{-1} \exp\left(-c \left(\rho(x,y)/t^{\frac1m}\right)^{\frac{m}{m-1}}\right)\quad (x,y \in \Omega,\,t>0).
\end{equation}
This is satisfied in particular by sublaplacian operators on Lie groups of polynomial growth \cite{Varo} as considered e.g. in
\cite{MaMe,Chri, Alex, MSt,Duon}, or by more general elliptic and sub-elliptic operators
\cite{Davia, Ouha}, and Schr\"odinger operators \cite{Ouhaa}.
It is also satisfied by all the operators in \cite[Section 2]{DuOS}.

Examples of operators satisfying a generalized Gaussian estimate for $p_0 > 1$ are higher order operators with bounded coefficients and Dirichlet boundary conditions on domains of $\R^d,$ Schr\"odinger operators with singular potentials on $\R^d$ and elliptic operators on Riemannian manifolds as listed in \cite[Section 2]{Bluna} and the references therein.

\begin{thm}\label{Thm R-bounded Blunck Hormander thm}
Let Assumption \ref{Ass Examples} hold.
Then for any $p \in (p_0,p_0'),$ the $\HI$ calculus of $A$ extends to a matricially $\gamma$-bounded homomorphism $\Ha \to B(L^p(\Omega))$ with
\[\alpha > d\left| \frac1{p_0} - \frac12\right|+\frac12.\]
\end{thm}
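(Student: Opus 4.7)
The plan is to verify condition (5) of Proposition \ref{Prop BIP Sgr Res} on $X = L^p(\Omega)$ with exponent $\alpha_0 = d\bigl|\tfrac{1}{p_0} - \tfrac12\bigr| + \tfrac12$. Since that proposition shows (5) implies matricially $\gamma$-bounded $\HI$ calculus on $\Hae$ for every $\epsilon > 0$, this yields the conclusion for every $\alpha > \alpha_0$. The standing hypotheses of the proposition are satisfied: $L^p(\Omega)$ has property $(\alpha)$ for $p \in (1,\infty)$, and under Assumption \ref{Ass Examples}, $A$ has a bounded $\HI$ calculus on $L^p(\Omega)$ for every $p \in (p_0,p_0')$ (see the discussion around \eqref{Equ Intro GE} and \cite{Blun}).

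By Lemma \ref{Lem Sgr}, condition (5) with exponent $\alpha_0$ follows once one establishes the $\gamma$-boundedness estimate
\[ \gamma\bigl(\bigl\{ T(te^{\pm i(\pi/2 - \theta)}) : t > 0 \bigr\}\bigr) \lesssim \theta^{-\beta} \quad \bigl(\theta \in (0,\pi/2)\bigr) \]
with $\beta = d\bigl|\tfrac{1}{p_0} - \tfrac12\bigr|$, since then $\alpha_0 = \beta + \tfrac12$. The entire technical content of the theorem is thus concentrated in proving this single $\gamma$-bound with the claimed angular blow-up rate.

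I would obtain it in two steps. First, extend \eqref{Equ generalized Gaussian estimate} to complex times $z = te^{\pm i(\pi/2-\theta)}$: Stein interpolation between the trivial $L^2$-contractivity $\|T(z)\|_{2\to 2}\leq 1$ (valid on the whole right half plane by self-adjointness of $A$) and the $L^{p_0}\to L^{p_0'}$ bound coming from \eqref{Equ generalized Gaussian estimate} yields, for $p \in (p_0,p_0')$, an off-diagonal estimate of the form
\[ \bigl\| \chi_{B(x,r_{|z|})} T(z) \chi_{B(y,r_{|z|})} \bigr\|_{p\to p} \lesssim \left( \frac{|z|}{\Re z} \right)^{d|1/p_0 - 1/2|} \exp\!\left( -c \bigl(\rho(x,y)/r_{|z|}\bigr)^{m/(m-1)} \right), \]
where the factor $(|z|/\Re z)^{d|1/p_0-1/2|}$ quantifies the cost of rotating off the real axis. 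This sharpens the exponent $d/2$ used in \cite{Bluna} (corresponding to $p_0 = 1$) to the value appropriate for general $p_0$.

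Second, to pass from these kernel-type bounds for individual operators to $\gamma$-boundedness of the parametrized family $\{T(te^{\pm i(\pi/2-\theta)}) : t > 0\}$, I would use a dyadic annular decomposition à la Blunck--Kunstmann (in the spirit of the proof of Lemma \ref{Lem 1}): split each operator into pieces acting between annuli $\rho(x,y) \sim 2^k r_{|z|}$, use Gaussian off-diagonal decay to sum over $k$, and invoke the finite cotype of $L^p(\Omega)$ to replace Gaussian sums by Rademacher sums when converting the uniform norm bounds on pieces into a $\gamma$-bound for the family. The main obstacle is precisely this second step: carrying out the annular sum for a whole family of operators without losing the sharp angular exponent $\beta = d|1/p_0 - 1/2|$ already established for single operators. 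Once this is done, Lemma \ref{Lem Sgr} delivers condition (5) with $\alpha_0 = \beta + \tfrac12$, and Proposition \ref{Prop BIP Sgr Res} then yields matricially $\gamma$-bounded $\Ha$ calculus on $L^p(\Omega)$ for every $\alpha > \alpha_0$, as claimed.
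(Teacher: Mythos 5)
Your reduction is exactly the paper's: verify condition (5) of Proposition~\ref{Prop BIP Sgr Res} on $X = L^p(\Omega)$ by establishing, via Lemma~\ref{Lem Sgr}, that $\{T(te^{\pm i(\pi/2-\theta)}):t>0\}$ is $\gamma$-bounded with constant $\lesssim \theta^{-\beta}$, $\beta = d\bigl|\tfrac1{p_0}-\tfrac12\bigr|$, and then invoke the conclusion of that proposition (and the discussion of $L^p$ having property $(\alpha)$ and $A$ having an $\HI$ calculus via \cite{Blun}). That is the right plan, and the target $\gamma$-bound is correctly identified.

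But the part you single out at the end as ``the main obstacle'' is precisely the content of the proof, and you have not supplied it. The paper resolves it by a closed chain of citations engineered for this purpose: \cite[Proposition 2.1]{BlKua} turns \eqref{Equ generalized Gaussian estimate} into $p_0\to 2$ off-diagonal bounds; \cite[Theorem 2.1]{Blun} extends these to complex $z = te^{i\theta}$ with the factor $(\cos\theta)^{-d(\frac1{p_0}-\frac12)}$ and, crucially, an \emph{adapted} radius $r_z = (\cos\theta)^{-\frac{m-1}{m}}t^{1/m}$; \cite[Proposition 2.1]{BlKua} converts these to annular estimates; and \cite[Theorem 2.2]{Kuns} --- a result built exactly to transform such annular $p_0\to 2$ bounds into $\gamma$-boundedness of the rescaled family on $L^p$, $p\in(p_0,p_0')$, given property $(\alpha)$ --- finishes the job. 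Your annular sum ``à la Blunck--Kunstmann'' is a heuristic paraphrase of this last theorem, but you have not carried it out, and the invocation of ``the spirit of the proof of Lemma~\ref{Lem 1}'' is off the mark (that lemma uses a function-space splitting $f = g+h$, not an annular decomposition in the physical variable).

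Two further technical points deserve attention. First, your proposed complex-time estimate uses $r_{|z|} = |z|^{1/m}$, whereas the paper's estimate from \cite[Theorem 2.1]{Blun} uses the larger scale $r_z = (\cos\theta)^{-\frac{m-1}{m}}t^{1/m}$; the $\theta$-dependence of the radius is not cosmetic, because it enters the annular decomposition and governs how the angular singularity propagates through the summation. You should check that the radius you propose still yields the sharp exponent. Second, you aim for $L^p\to L^p$ off-diagonal bounds by Stein interpolation and then feed them directly into a $\gamma$-boundedness argument; the paper instead works at the $p_0\to 2$ level and gets $\gamma$-boundedness on the full interval $(p_0,p_0')$ in one stroke from \cite[Theorem 2.2]{Kuns}. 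Replacing that step by an ad hoc interpolation plus annular argument must be done with care to avoid losing the sharp exponent, which you correctly identify as the pitfall.
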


\begin{proof}
We show that \eqref{Equ Lem Sgr} holds with $\beta = d (\frac{1}{p_0} - \frac12).$
By \cite[Proposition 2.1]{BlKua}, the assumption \eqref{Equ generalized Gaussian estimate} implies that
\[ \| \chi_{B(x,r_t)} e^{-tA} \chi_{B(y,r_t)}\|_{p_0\to 2} \leq C_1 \mu(B(x,r_t))^{\frac{1}{2}-\frac{1}{p_0}} \exp(-c_1 (\rho(x,y)/r_t)^{\frac{m}{m-1}}) \quad (x,y \in \Omega,\,t>0)\]
for some $C_1,c_1>0.$
By \cite[Theorem 2.1]{Blun}, this can be extended from real $t$ to complex $z = te^{i\theta}$ with $\theta \in (-\frac{\pi}{2},\frac{\pi}{2}):$
\[ \|\chi_{B(x,r_z)} e^{-zA} \chi_{B(y,r_z)} \|_{p_0 \to 2} \leq C_2 \mu(B(x,r_z))^{\frac{1}{2}-\frac{1}{p_0}} (\cos \theta )^{-d(\frac{1}{p_0} -\frac{1}{2})} \exp(-c_2 (\rho(x,y)/r_z)^{\frac{m}{m-1}}),\]
for $r_z = (\cos \theta )^{-{\frac{m-1}{m}}} t^{\frac1m},$ and some $C_2,c_2 >0.$
By \cite[Proposition 2.1 (i) (1) $\Rightarrow$ (3) with $R = e^{-zA},\,\gamma =\alpha = \frac{1}{p_0} - \frac12,\,\beta = 0,\,r = r_z,\,u=p_0$ and $v = 2$]{BlKua}, this gives for any $x \in \Omega,\,\Re z > 0$ and $k \in \N_0$
\[ \|\chi_{B(x,r_z)} e^{-zA} \chi_{A(x,r_z,k)}\|_{p_0 \to 2} \leq C_3 \mu(B(x,r_z))^{\frac12 - \frac{1}{p_0}} (\cos \theta )^{-d(\frac{1}{p_0} -\frac{1}{2})} \exp(-c_3 k^{\frac{m}{m-1}}),\]
where $A(x,r_z,k)$ denotes the annular set $B(x,(k+1)r_z)\backslash B(x,k r_z).$
By \cite[Theorem 2.2 with $q_0 = p_0,\,q_1 = s = 2,\rho(z) = r_z$ and $S(z) = (\cos \theta)^{d(\frac{1}{p_0}-\frac{1}{2})} e^{-zA}$]{Kuns} and property $(\alpha),$
we deduce that
\[ \{ (\cos \theta)^{d(\frac{1}{p_0}-\frac{1}{2})} e^{-zA} :\: \Re z > 0\} \]
is $\gamma$-bounded. 
Now apply Lemma \ref{Lem Sgr} and Proposition \ref{Prop BIP Sgr Res}, noting that $A$ has an $\HI$ calculus on $L^p(\Omega)$ \cite[Corollary 2.3]{Blun}.
\end{proof}

\begin{rem}\label{Rem R-bounded Blunck Hormander thm}
\begin{enumerate}
\item Theorem \ref{Thm R-bounded Blunck Hormander thm} improves on \cite[Theorem 1.1]{Bluna} in that it includes the matricial $\gamma$-boundedness of the H\"ormander calculus.
Note that \cite{Bluna} obtains also a weak-type result for $p = p_0.$
If $p_0$ is strictly larger than $1,$ then Theorem \ref{Thm R-bounded Blunck Hormander thm} improves the order of derivation $\alpha$ of the calculus from $\displaystyle \frac{d}{2} + \frac{1}{2} +\epsilon$ in \cite{Bluna} to $\displaystyle d \left|\frac{1}{p_0} - \frac{1}{2} \right| + \frac12 + \epsilon.$
In \cite[Theorem 6.4 a)]{Uhl}, under the assumptions of Theorem \ref{Thm R-bounded Blunck Hormander thm}, a $\Hor^\beta_r$ calculus with 
with $\beta > (d+1) | \frac{1}{p_0} - \frac{1}{2} |$ and $r > | \frac12 - \frac1p |^{-1}$ is derived.
Here $\Hor^\beta_r$ is defined similarly to $\Ha$ by
\[ \Hor^\beta_{r} = \{ f : (0,\infty)\to \C:\: \sup_{k \in \Z} \|(f \circ \exp )\phi_k\|_{W^\beta_r} < \infty\}.\]
Note that $\Hor^\beta_r$ is larger than $\Hor^\alpha.$
In the classical case of Gaussian estimates, i.e. $p_0 = 1,$ \cite{DuOS} yields a $\Hor^{\alpha_2}_\infty$ calculus under Assumption \ref{Ass Examples}
and even a $\Hor^{\alpha_2}$ calculus for many examples, e.g. homogeneous operators, with the better derivation order $\alpha_2 > \frac{d}{2}.$
\item The theorem also holds for the weaker assumption that $\Omega$ is an open subset of a homogeneous space $\tilde{\Omega}.$
In that case, the ball $B(x,r_t)$ on the right hand side in \eqref{Equ generalized Gaussian estimate} is the one in $\tilde{\Omega}.$
This variant can be applied to elliptic operators on irregular domains $\Omega \subset \R^d$ as discussed in \cite[Section 2]{Bluna}.
\end{enumerate}
\end{rem}

In Theorem \ref{Thm R-bounded Blunck Hormander thm}, the operator $A$ was assumed to be self-adjoint, and thus, admits a functional calculus $L^\infty \to B(L^2(\Omega)).$
The space $L^\infty = L^\infty((0,\infty);d\mu_A)$ is larger than $\Ha,$ and one can use this fact to ameliorate the functional calculus of $A$ on $L^q(\Omega)$ by complex interpolation.

\begin{prop}
Let $A$ satisfy Assumption \ref{Ass Examples}.
Then for $q \in (p_0,p_0'),\, \alpha > d \left| \frac{1}{p_0}- \frac12 \right| + \frac12$ and $\theta \in (0,1)$ with $\theta > \left| \frac1q -\frac{1}{p_0}\right| / \left|{\frac12 - \frac1{p_0}} \right|,$
the functional calculus of $A$ on $L^q$
\begin{equation}\label{Equ Proof Rem Blunck}
u_{L^q} : (L^\infty,\Ha)_\theta \to B(L^q(\Omega)) \text{ is matricially }\gamma\text{-bounded.}
\end{equation}
Here $(L^\infty,\Ha)_\theta$ is the complex interpolation space which is given an operator space structure \cite[p.~56]{Pis2} by
\[ M_n \otimes (L^\infty,\Ha)_{\theta} = (M_n \otimes L^\infty,M_n \otimes \Ha)_\theta. \]
\end{prop}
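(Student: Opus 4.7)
The plan is to realize $u_{L^q}$ on $(L^\infty,\Ha)_\theta$ by complex interpolation between two calculi of $A$ that are already known to be matricially $\gamma$-bounded. At one endpoint, since $A$ is self-adjoint and positive on $L^2(\Omega)$, the Borel functional calculus $L^\infty \to B(L^2(\Omega))$ is a unital $*$-homomorphism between $C^*$-algebras, hence completely contractive; since $L^2$ is a Hilbert space, Remark \ref{Rem mat}~(2) makes it matricially $\gamma$-bounded with constant $1$. At the other endpoint, for any auxiliary $p_1 \in (p_0,p_0')$ Theorem \ref{Thm R-bounded Blunck Hormander thm} provides a matricially $\gamma$-bounded Hörmander calculus $\Ha \to B(L^{p_1}(\Omega))$ under the derivation condition $\alpha > d|\tfrac{1}{p_0}-\tfrac12| + \tfrac12$. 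Given $q$ and $\theta$ as in the statement, one picks $p_1\in (p_0,p_0')$ and an auxiliary parameter $\theta_1\in(0,1)$ with $\tfrac{1}{q} = \tfrac{1-\theta_1}{2} + \tfrac{\theta_1}{p_1}$ and $\theta_1 \leq \theta$; the hypothesis on $\theta$ relative to $q$ and $p_0$ is precisely what makes such a choice of $p_1$ strictly inside $(p_0,p_0')$ possible.

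To execute the interpolation one identifies the matricial $\gamma$-targets with vector-valued $L^p$-spaces. Since $L^p(\Omega)$ has finite cotype, the Khintchine--Kahane inequality yields a canonical isomorphism $\Gauss_n(L^p(\Omega)) \cong L^p(\Omega;\ell^2_n)$ with constants depending only on $p$, so $M_n\otimes_\gamma B(L^p(\Omega)) \cong B(L^p(\Omega;\ell^2_n))$ uniformly in $n$. The vector-valued Riesz--Thorin theorem then gives $(L^2(\Omega;\ell^2_n),L^{p_1}(\Omega;\ell^2_n))_{\theta_1} = L^q(\Omega;\ell^2_n)$. For $F=[F_{ij}] \in M_n\otimes(L^\infty,\Ha)_\theta$, lift $F$ to an analytic family $G(\cdot)$ on the strip $\{0\leq\Re z\leq 1\}$ with $G(\theta_1)=F$, bounded in $M_n\otimes L^\infty$ on $\Re z=0$ and in $M_n\otimes\Ha$ on $\Re z=1$; this is possible by the very definition of the interpolated operator space structure given in the proposition. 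The operator-valued analytic family $\Phi(z) = [u(G(z)_{ij})]$ is then uniformly bounded on $L^2(\Omega;\ell^2_n)$ on the left edge and on $L^{p_1}(\Omega;\ell^2_n)$ on the right edge, and Stein's three-lines lemma applied to $z\mapsto \spr{\Phi(z)\,x(z)}{y(z)}$ for suitable analytic lifts $x(z),y(z)$ of test vectors in $L^q(\Omega;\ell^2_n)$ and its dual delivers
\[ \bignorm{[u(F_{ij})]}_{M_n\otimes_\gamma B(L^q(\Omega))} \;\lesssim\; \bignorm{F}_{M_n\otimes(L^\infty,\Ha)_\theta} \]
with implicit constants independent of $n$, which is the claimed matricial $\gamma$-boundedness of $u_{L^q}$.

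The principal technical obstacle is uniformity in $n$: the Khintchine--Kahane equivalence, the vector-valued Riesz--Thorin bound and the three-lines estimate must all have constants that do not depend on $n$. This is automatic because $\ell^2_n$ enters only as a coefficient Hilbert space in the Bochner spaces $L^p(\Omega;\ell^2_n)$, whose complex interpolation behaves as in the scalar case with $n$-uniform constants. A secondary but necessary technical point is the compatibility of the two endpoint calculi within a single analytic family: both restrict on $\HI_0(\Sigma_\omega)$ to the common Cauchy-integral representation \eqref{Equ Cauchy integral formula} of $f(A)$, and both extend from there by the Convergence Lemma \ref{Lem Convergence Lemma}, so $\Phi(z)$ is an unambiguously defined analytic family on the whole strip and the three-lines argument applies.
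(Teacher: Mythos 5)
Your proof follows essentially the same route as the paper: at one endpoint you take the $*$-representation $L^\infty \to B(L^2)$, at the other the matricially $\gamma$-bounded $\Ha$-calculus on an auxiliary $L^{p_1}$ provided by Theorem~\ref{Thm R-bounded Blunck Hormander thm}, and then interpolate. The only difference from the paper is cosmetic: where you identify $\Gauss_n(L^p)\cong L^p(\Omega;\ell^2_n)$ and run an explicit three-lines argument with vector-valued Riesz--Thorin, the paper instead cites the interpolation identity $(\Gauss(L^p),\Gauss(L^2))_\theta=\Gauss((L^p,L^2)_\theta)$ from \cite{KaKW} and invokes bilinear complex interpolation, which packages the uniformity-in-$n$ and compatibility issues you handle by hand.
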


\begin{proof}
The self-adjoint calculus $u_{L^2} : L^\infty \to B(L^2(\Omega))$ is completely bounded since it is a $\ast$-representation \cite[Proposition 1.5]{Pis2}, so by Remark \ref{Rem mat}, $u_{L^2}$ is matricially $\gamma$-bounded.
Moreover, we have $(\Gauss(L^p),\Gauss(L^2))_\theta = \Gauss((L^p,L^2)_\theta)$ \cite[Proposition 3.7]{KaKW}.
Then by bilinear interpolation between
\[ M_n \otimes L^\infty \times \Gauss_n(L^2) \to \Gauss_n(L^2),\, ([a_{ij}] \otimes f ,\sum_k \gamma_k \otimes x_k) \mapsto \sum_{k,j} \gamma_k a_{kj} u_{L^2}(f)x_j \]
and, with the mapping $u_{L^p}$ resulting from Theorem \ref{Thm R-bounded Blunck Hormander thm}, $p$ given by $\theta = |\frac1q - \frac1p| / |\frac12 - \frac1p|,$
\[ M_n \otimes \Ha \times \Gauss_n(L^p) \to \Gauss_n(L^p),\, ([a_{ij}] \otimes f , \sum_k \gamma_k \otimes x_k) \mapsto \sum_{k,j} \gamma_k a_{kj} u_{L^p}(f)x_j, \]
one deduces \eqref{Equ Proof Rem Blunck}.
\end{proof}

Note that the space $(L^\infty,\Ha)_{\theta}$ contains $\Hor^\beta_{r,0},$ where $\frac1r > \frac{\theta}{2},$ $\beta > \alpha \theta + (\frac1r - \frac{\theta}{2}).$
Here $\Hor^\beta_{r,0} = \left\{ f \in \Hor^{\beta}_r :\: \|(f \circ \exp) \phi_k \|_{W^\beta_r} \to 0\text{ for }|k| \to \infty \right\}.$
Then \eqref{Equ Proof Rem Blunck} implies that in particular, $\Hor^\beta_{r,0} \to B(L^q),\,f \mapsto f(A)$ is (norm) bounded and by \cite[Section 4.6.1]{Kr}, this extends moreover boundedly to $\Hor^\beta_r \to B(L^q).$

In \cite[Section 7]{DuOS}, for many examples of operators $A$ satisfying \eqref{Equ Gaussian estimate}, it is shown that the functional calculus
\begin{equation}\label{Equ DuOS}
u: \Ha \to B(L^p(\Omega)),\,f \mapsto f(A)\text{ is bounded for }1 < p < \infty\text{ and }\alpha > \frac{d}{2}.
\end{equation}
Moreover, for the fundamental example $A = - \Delta$ on $L^p(\R^d),$ the critical order $\frac{d}{2}$ in \eqref{Equ DuOS} is optimal \cite[IV.7.4]{Ste},\cite[Proposition 4.12 (2)]{Kr}.
Note that the derivation order for the matricially $\gamma$-bounded calculus obtained in Theorem \ref{Thm R-bounded Blunck Hormander thm} under the assumption \eqref{Equ Gaussian estimate} (i.e. $p_0 = 1$)
is only $\frac{d+1}{2},$ and therefore gives a weaker result
in the derivation order compared to \eqref{Equ DuOS}.

Thus the question arises if an arbitrary $A$ that has a norm-bounded H\"ormander calculus also has a matricially $\gamma$-bounded H\"ormander calculus.
In contrast to the self-adjoint $L^\infty$ calculus on Hilbert space, which is always matricially $\gamma$-bounded (see the proof above), we have the following result.

\begin{prop}\label{Prop bounded vs mat-gamma bounded}
Let $A$ be a $0$-sectorial operator on a space $X$ with property $(\alpha).$
Let $\alpha > \frac12$ and $\beta > \alpha + 1.$
Suppose that its functional calculus $f \mapsto f(A)$ is bounded $u_\alpha : \Ha \to B(X),$ and denote $u_\beta$ the restriction of $u_\alpha$ to $\Hor^\beta.$
Then $u_\beta : \Hor^\beta \to B(X)$ is matricially $\gamma$-bounded.

On the other hand, for any $\alpha > 0,$ there exists some $A$ on a Hilbert space $X$ such that
$u_\alpha : \Ha \to B(X)$ is bounded (even $\gamma$-bounded because of \eqref{Equ Hilbert Gaussian}),
but its restriction $u_{\alpha + \frac12} : \Hor^{\alpha + \frac12} \to B(X)$ is not matricially $\gamma$-bounded.
\end{prop}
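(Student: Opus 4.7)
The plan is to prove each assertion via Theorem \ref{Thm Ha}, which translates matricial $\gamma$-boundedness of the $\Ha$ (resp.\ $\Hor^\beta$) calculus into the square function estimate for $A^{it}$.

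\textbf{First assertion (positive direction).} Applying Theorem \ref{Thm Ha} with $\beta$ in place of $\alpha$, it suffices to establish the square function estimate $\bignorm{(1+t^2)^{-\beta/2} A^{it} x}_{\gamma(\R,X)} \lesssim \|x\|$. Boundedness of $u_\alpha$ combined with the identity $\|(\cdot)^{it}\|_{\Ha} \cong (1+t^2)^{\alpha/2}$ yields $\|A^{it}\| \lesssim (1+t^2)^{\alpha/2}$, and since $\HI(\Sigma_\omega)\hookrightarrow\Ha$ and $X$ has property $(\alpha)$, the set $\{A^{is}: |s|\leq 1\}$ is $\gamma$-bounded (as used in the proof of Proposition \ref{Prop BIP Sgr Res}). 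Fix a smooth partition $1 = \sum_{n\in\Z} \varphi_n$ with $\varphi_n(t) = \varphi(t-n)$ and $\supp\varphi \subset [-1,1]$. By Lemma \ref{Lem square function integral transform}(3),
\[ \bignorm{(1+t^2)^{-\beta/2} A^{it} x}_{\gamma(\R,X)} \leq \sum_{n\in\Z} \bignorm{\varphi_n(t)(1+t^2)^{-\beta/2} A^{it} x}_{\gamma(\R,X)}. \]
For each $n$, factor $A^{it} = A^{i(t-n)} A^{in}$, set $y_n = A^{in} x$ with $\|y_n\| \lesssim (1+n^2)^{\alpha/2}\|x\|$, bound $(1+t^2)^{-\beta/2} \lesssim (1+n^2)^{-\beta/2}$ on $\supp \varphi_n$, and apply Lemma \ref{Lem Folklore square functions}(3) to the $\gamma$-bounded family $\{A^{i(t-n)} : t \in \supp \varphi_n\} \subset \{A^{is}: |s|\leq 1\}$ and the scalar-times-vector element $\varphi_n(t) y_n \in \gamma(\R,X)$ of $\gamma$-norm $\|\varphi_n\|_{L^2}\|y_n\|$. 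This gives
\[ \bignorm{\varphi_n(t)(1+t^2)^{-\beta/2} A^{it} x}_{\gamma(\R,X)} \lesssim (1+n^2)^{(\alpha-\beta)/2}\|x\|, \]
and $\sum_{n\in\Z} (1+n^2)^{(\alpha-\beta)/2}$ converges precisely when $\beta - \alpha > 1$.

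\textbf{Second assertion (counterexample).} On a Hilbert space $H$, $\gamma(\R,H) = L^2(\R,H)$, so Theorem \ref{Thm Ha} reduces matricial $\gamma$-boundedness of $u_{\alpha+1/2}$ to the estimate $\int_\R (1+t^2)^{-(\alpha+1/2)} \|A^{it} x\|^2\, dt \lesssim \|x\|^2$. The plan is to construct $A$ for which $\|A^{it}\| \lesssim (1+t^2)^{\alpha/2}$ (so that the $\Ha$ calculus at level $\alpha$ is bounded) while on some vector $x_0 \in H$ the bound is saturated, i.e.\ $\|A^{it} x_0\| \gtrsim (1+t^2)^{\alpha/2}\|x_0\|$ for $|t|$ large; then the integrand above is bounded below by $c(1+t^2)^{-1/2}\|x_0\|^2$, whose integral diverges. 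A prototype is $A = A_0 \otimes B$ on $L^2(\R_+) \otimes V$, with $A_0$ multiplication by the variable (so $A_0^{it}$ is unitary) and $B \in B(V)$ a nilpotent-type perturbation of the identity on a finite-dimensional space $V$ satisfying $\|B^{it}\|_{B(V)} \cong (1+|t|)^\alpha$; saturation occurs on the top of the corresponding Jordan chain. For integer $\alpha$ one takes $B = e^N$ with $N^{\alpha+1}=0$, $N^\alpha \neq 0$; for general $\alpha>0$ one combines such blocks via direct sums and complex interpolation to calibrate bounded $\Ha$ calculus at the prescribed order.

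The first assertion is essentially routine once Theorem \ref{Thm Ha} is at hand; the loss of half a derivative compared with the Hilbert-case threshold $\alpha + 1/2$ (which is the exponent appearing in the second assertion) stems from using the discrete sum $\sum_n (1+n^2)^{(\alpha-\beta)/2}$ in place of a continuous integration. The hard part is the second assertion: one must ensure that the chosen $A$ has bounded $\Ha$ calculus at the precise order $\alpha$, not merely at some slack level $\alpha+\epsilon$, for otherwise the square function at $\beta = \alpha + 1/2$ converges and no counterexample arises.
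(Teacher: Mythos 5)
Your proof of the \emph{first assertion} is correct and in fact takes a cleaner, more self-contained route than the paper's own. The paper invokes an external result (\cite[Corollary 6.3]{Kr1}) to obtain the $\gamma$-bound $\gamma\big(\{T(te^{\pm i(\pi/2-\theta)}):\, t>0\}\big) \lesssim \theta^{-\alpha - 1/2}$, then passes through Lemma \ref{Lem Sgr} and Proposition \ref{Prop BIP Sgr Res}(5). You instead build the square function estimate $\bignorm{\langle t\rangle^{-\beta}A^{it}x}_{\gamma(\R,X)}\lesssim \|x\|$ directly from the two primitive consequences of $\|u_\alpha\|<\infty$, namely $\|A^{it}\| \lesssim \langle t\rangle^\alpha$ and the $\gamma$-boundedness of $\{A^{is}:|s|\leq 1\}$, by a partition-of-unity / Lemma \ref{Lem square function integral transform}(3) argument on unit intervals. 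The chain of factorizations $A^{it}=A^{i(t-n)}A^{in}$ together with Lemma \ref{Lem Folklore square functions}(3) is exactly right, and the resulting series $\sum_n \langle n\rangle^{\alpha-\beta}$ converges precisely under $\beta>\alpha+1$. This is a legitimate alternative proof of that half, arguably more transparent because it never leaves the $A^{it}$ picture.

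The \emph{second assertion}, however, has a genuine gap, and it is exactly at the point you flag as ``the hard part.'' Your overall strategy (find $A$ Hilbertian with $\|A^{it}x\|\cong \langle t\rangle^\alpha\|x\|$ so that $\int_\R \langle t\rangle^{-2(\alpha+1/2)}\|A^{it}x\|^2\,dt$ diverges, then invoke Theorem \ref{Thm Ha}) is the right frame, but the Jordan-block example $A=A_0\otimes e^N$ does not produce a bounded $\Ha$ calculus at the matching order, and your fallback via ``direct sums and complex interpolation'' is not substantiated. Concretely: with $N^{m+1}=0$, $N^m\neq 0$, the operator $f(A)$ acts as $f(s e^N)=\sum_{k=0}^m \frac{f^{(k)}(s)}{k!}(s(e^N-I))^k$, so $\|f(A)\|$ controls $|s^m f^{(m)}(s)|$ pointwise; but the $\Ha$ norm only controls $L^\infty$-bounds of such weighted derivatives up to order $\lfloor\alpha-\tfrac12\rfloor$ (Sobolev embedding from $\Wa_2$), i.e.\ one needs $\alpha>m+\tfrac12$ for $\|f(A)\|\lesssim\|f\|_{\Ha}$, whereas $\|A^{it}\|\cong|t|^m$. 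So this construction always has a half-derivative gap between the calculus order and the imaginary-power growth — precisely the gap that kills the counterexample (the square function at $m+\tfrac12$ converges once $\alpha>m+\tfrac12$). The paper closes this with a quite different device: take $X=\Wa$ itself, $A$ the multiplication operator by the identity function, so that $f(A)g=fg$. Then $\|A^{it}g\|_{\Wa}\cong\langle t\rangle^\alpha\|g\|_{\Wa}$ by a direct modulation computation, and the Strichartz multiplier theorem gives $\|fg\|_{\Wa}\lesssim\|f\|_{\Ha}\|g\|_{\Wa}$ with the \emph{same} $\alpha$ on both sides — exactly the sharp calibration your construction could not achieve. This pointwise-multiplier realization of $\Ha$ on $\Wa$ is the missing ingredient.
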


\begin{proof}
For $t \in \R,$ let $f_t(\lambda) = \lambda^{it}.$
It is easy to check that $\|f_t\|_{\Ha} \lesssim \langle t \rangle^{\alpha}$ \cite[Lemma 4.12 (4)]{Kr}.
By Lemma \ref{Lem Function spaces} (1), $A$ has an $\HI$ calculus.
By \cite[Corollary 6.3]{Kr1}, the set $\left\{ T(te^{\pm i (\frac{\pi}{2} - \theta)}) :\: t > 0 \right\}$ is $\gamma$-bounded with constant $\leq C \theta^{-\alpha - \frac12}\quad (\theta \in (0,\frac{\pi}{2})).$
By Lemma \ref{Lem Sgr}, condition (5) of Proposition \ref{Prop BIP Sgr Res} is satisfied with $\alpha + 1$ in place of $\alpha$ and therefore, $u_{\beta} : \Hor^{\beta} \to B(X)$ is matricially $\gamma$-bounded.

For the second statement, let $\alpha > \frac12.$
Consider $X = \Wa$ and the group $U(t) g = (\cdot)^{it} g$ on $X.$
Note that
\[ \| (\cdot)^{it} g \|_X = \| (g \circ \exp)\hat{\phantom{i}}(\cdot - t) \langle \cdot \rangle^\alpha \|_2 = \| (g \circ \exp)\hat{\phantom{i}} (\cdot) \langle (\cdot) + t \rangle^\alpha \|_2 \cong \langle t \rangle^\alpha \|g\|_X. \]
In particular, $\|U(t)\| \cong \langle t \rangle^\alpha.$
It is easy to check that $U(t) = A^{it}$ are the imaginary powers of a $0$-sectorial operator $A$ and that $f(A) g = f g$ for any $g \in X$ and $f \in \bigcup_{\omega > 0} \HI(\Sigma_\omega).$
By \cite{Str}, one has $\|fg\|_{\Wa} \lesssim \|f\|_{\Ha} \|g\|_{\Wa}.$
Thus, $A$ has a bounded $\Ha$ calculus.

On the other hand, since $X$ is a Hilbert space, the square function condition of Theorem 4.10 reads
\[ \| \langle t \rangle^{-\beta} A^{it} x \|_{\gamma(\R,X)} = \| \langle t \rangle^{-\beta} A^{it} x\|_{L^2(\R,X)} \cong \left( \int_\R \langle t \rangle^{-2 \beta + 2 \alpha} dt \right)^{\frac12} \|x\|, \]
which is finite if and only if $\beta > \alpha + \frac12.$
\end{proof}

\section{Proofs of Lemmas \ref{Lem 1} - \ref{Lem 4}}\label{Sec 6 Proofs Lemmas}

\begin{proof}[Proof of Lemma \ref{Lem 1}]
Since $X$ has property $(\alpha),$ the fact that $A$ has a bounded $\HI(\Sigma_\omega)$ calculus implies \cite[Theorem 12.8]{KuWe} that for any $\theta > \omega,$
\[ \left\{ g(A) :\: \|g\|_{\infty,\theta} \leq 1 \right\}\text{ is }\gamma\text{-bounded}.\]
We fix some $\theta \in (\omega, \frac{\pi}{4}).$
As the mapping $u : \: \Wa \to B(X)$ is matricially $\gamma$-bounded, by Remark \ref{Rem mat},
\[ \left\{ h(A) :\: \|h\|_{\Wa} \leq 1 \right\}\text{ is }\gamma\text{-bounded}.\]
The lemma stated that
$\gamma\left(\left\{ f_{2^k z}(A) :\: k \in \Z \right\}\right) \lesssim \left| z /\Re z\right|^\beta,$ where $f_{2^k z}(\lambda) = \exp(-2^k z \lambda).$
Thus it suffices to decompose $f_{2^k z} = g + h,$ where $\|g\|_{\infty,\theta},\,\|h\|_{\Wa} \lesssim \left| z / \Re z \right|^\beta.$

As $\Psi : f \mapsto f(r \cdot)$ is an isomorphism $\HI(\Sigma_\theta) \to \HI(\Sigma_\theta)$ and $\Wa \to \Wa,$ with $\|\Psi \| \cdot \|\Psi^{-1}\| \leq C,\:C$ independent of $r > 0,$ it suffices to have the above decomposition for $|z| = 1$ and $k = 0.$
We choose $g(\lambda) = \exp(-(z+1) \lambda)$ and $h(\lambda) = \exp(-z\lambda) (1 - e^{-\lambda}).$
As $|\arg(z+1)| + \theta \leq \frac{\pi}{4} + \frac{\pi}{4} = \frac{\pi}{2},$ we actually have $\|g\|_{\infty,\theta} \leq 1 \lesssim \left |\Re z \right|^{-\beta}.$
Further it is a simple matter to check that $\|h\|_{\Wa} \lesssim \left |\Re z \right|^{-\beta}$
for any $\beta > \alpha.$
\end{proof}

\begin{proof}[Proof of Lemma \ref{Lem 2}]
The assumption of the lemma was
\begin{equation}\label{Equ Proof Lem 2}
\gamma\left(\left\{ \exp(-2^k z A) :\: k \in \Z \right\}\right) \lesssim \left| \frac{z}{\Re z} \right|^\beta \quad (\Re z > 0).
\end{equation}
We first show that
\begin{equation}\label{Equ 2 Proof Lem 2}
\gamma \left( \left\{ (2^k t A)^{\frac12} \exp(-2^k t e^{\pm i (\frac{\pi}{2} - \omega) } A ) :\: k \in \Z \right\} \right) \lesssim \omega^{-(\beta + \frac12)} \quad (\omega \in (0,\frac{\pi}{2})).
\end{equation}
Decompose
\[ e^{\pm i (\frac{\pi}{2} - \omega)} t  = s + e^{\pm i (\frac{\pi}{2} - \frac{\omega}{2})} r,\]
where $s,r > 0$ are uniquely determined by $t$ and $\omega.$
Then
\[ (2^k t A)^{\frac12} \exp(-e^{\pm i (\frac{\pi}{2} - \omega)} 2^k t A) = \left( \frac{t}{s} \right)^{\frac12} (2^k s A)^{\frac12} \exp(-2^k s A) \exp(- 2^k r e^{\pm i (\frac{\pi}{2} - \frac{\omega}{2})}A),\]
and consequently,
\begin{align}
\gamma\left(\left\{(2^k t A)^{\frac12} \exp(-e^{\pm i (\frac{\pi}{2} - \omega)}2^k t A) :\: k \in \Z \right\} \right) & \leq \sup_t (t/s)^{\frac12} \times \gamma\left(\left\{ (2^k  s A)^{\frac12} \exp(-2^k s A) :\: k \in \Z \right\} \right) \nonumber \\
& \times \gamma \left(\left\{ \exp(-2^k r e^{\pm i (\frac{\pi}{2} - \frac{\omega}{2})}A):\: k \in \Z\right\} \right).
\label{Equ 3 Proof Lem 2}
\end{align}
We will show that the right hand side of \eqref{Equ 3 Proof Lem 2} can be estimated by $\lesssim \omega^{-\frac12} \times 1 \times \omega^{-\beta}.$
The estimate for the first factor follows from the law of sines
\[ t/s  = \sin (\frac{\pi}{2} + \omega/2)/\sin(\omega/2) \cong \omega^{-1}. \]
For the second estimate, note that by \cite[Example 2.16]{KuWe}, \eqref{Equ Proof Lem 2} implies that $\{ \exp(-zA) :\: z \in \Sigma_\delta \}$ is $\gamma$-bounded for any $\delta < \frac{\pi}{2}$ and consequently, by \cite[Theorem 2.20, $(iii) \Longrightarrow (i)$]{KuWe},
$\{ \lambda (\lambda - A)^{-1} :\: - \lambda \in \Sigma_\theta \}$ is $\gamma$-bounded for any $\theta \in (\frac{\pi}{2},\pi).$
Then with $f(\lambda) = \lambda^{\frac12} e^{-\lambda},$ the Cauchy integral formula \eqref{Equ Cauchy integral formula} gives
\begin{align*}
(2^k t A)^{\frac12} \exp(-2^k t A) & = \frac{1}{2\pi i} \int_{\partial \Sigma_{\pi - \theta}} f(\lambda) (\lambda - 2^ktA)^{-1} d\lambda \\
& = \frac{1}{2\pi i} \int_{\partial \Sigma_{\pi - \theta}} \frac{f(\lambda)}{\lambda} \times \frac{\lambda}{2^k t} (\frac{\lambda}{2^k t} - A)^{-1} d\lambda
\end{align*}
The first factor in the last integral belongs to $L^1(\partial \Sigma_{\pi - \theta},|d\lambda|)$
and the second factor is $\gamma$-bounded by the above for any $\theta < \pi.$
Thus by the well-known integral lemma for $\gamma$-bounds \cite[Corollary 2.14]{KuWe},
the second factor in \eqref{Equ 3 Proof Lem 2} is finite.

The estimate for the third factor in \eqref{Equ 3 Proof Lem 2} follows from the assumption \eqref{Equ Proof Lem 2}, so that we have shown \eqref{Equ 2 Proof Lem 2}.

Now we will write the expression in \eqref{Equ Res} as an integral of the expression in \eqref{Equ 2 Proof Lem 2}.
Let $\theta \in (0,\frac{\pi}{2}),\,\lambda = t e^{i \theta}$ and set $\phi  = \frac{\pi}{2} - \frac{\theta}{2},$ so that $\Re ( e^{i \phi} \lambda ) < 0.$
Then
\begin{align*}
\lambda^{\frac12} (2^k A)^{\frac12} ( \lambda  - 2^k A)^{-1} & = \lambda^{\frac12} (2^k A)^{\frac12} e^{i \phi} (e^{i \phi} \lambda - e^{i \phi} 2^k A)^{-1} \\
& = \int_0^\infty  -e^{i \phi} s^{-\frac12} \lambda^{\frac12} \exp( e^{i \phi} \lambda s) \times (2^k s A)^{\frac12} \exp(-2^k e^{i \phi} s A) ds.
\end{align*}
The second factor of the integral is $\gamma$-bounded by \eqref{Equ 2 Proof Lem 2}
and the first factor is integrable, as the following lines show.
\begin{align*}
\int_0^\infty s^{-\frac12} |\lambda^{\frac12} \exp(e^{i\phi} \lambda s)| ds & = \int_0^\infty s^{-\frac12} | \exp(e^{i \phi} e^{i\theta} s)| ds \\
& = \int_0^\infty s^{-\frac12} \exp( \cos(\frac{\pi}{2} + \frac{\theta}{2}) s) ds \\
& = \int_0^\infty s^{-\frac12} \exp(-s) ds \,|\cos(\frac{\pi}{2} + \frac{\theta}{2})|^{-\frac12} \\
& \lesssim \theta^{-\frac12}
\end{align*}
Then $\tau = \left\{ \lambda^{\frac12}(2^k A)^{\frac12} (\lambda - 2^k A)^{-1} :\: k \in \Z \right\}$ is $\gamma$-bounded since by \cite[Proposition 2.6 (5)]{Kr}, we have
\begin{align*}
\gamma(\tau) & \leq \int_0^\infty s^{-\frac12} | \lambda^{\frac12} \exp(e^{i\phi} \lambda s) | ds \times
\sup_{ t  > 0} \gamma\left( \left\{ (2^k t A)^{\frac12} \exp(-2^k t e^{\pm i (\frac{\pi}{2} - \omega)} A) :\: k \in \Z \right\} \right) \\
& \lesssim |\arg \lambda |^{- \frac12} \times |\arg \lambda |^{-\beta - \frac12}
\end{align*}
The same reasoning applies for $\lambda = t e^{i\theta}$ and $\theta \in (-\frac{\pi}{2},0).$
\end{proof}

\begin{proof}[Proof of Lemma \ref{Lem 3}]
By \cite[Proposition 4.18]{Kr} and \cite[p. 73]{CDMY}, it suffices to show that for some $\delta \in (\gamma,n),$
\begin{equation}\label{Equ 1 Lem Intermediate bad Mihlin calculus}
\|f(A)\| \lesssim \theta^{-\delta} \|f\|_{\infty,\theta}\text{ for any }f \in \bigcup_{\theta > 0} \HI_0(\Sigma_\theta).
\end{equation}
To show \eqref{Equ 1 Lem Intermediate bad Mihlin calculus}, we use the Kalton-Weis characterization of the bounded $\HI(\Sigma_\theta)$ calculus in terms of $\gamma$-bounded operator families (\cite{KaW}, see also \cite[Theorem 12.7]{KuWe}).
More precisely, we follow that characterization in the form of the proof of \cite[Theorem 12.7]{KuWe} and keep track of the dependence of appearing constants on the angle $\theta$.
It is shown there that for $f \in \HI_0(\Sigma_{2\theta}),\,x \in X$ and $x' \in X',$
\begin{align}
|\spr{f(A)x}{x'}| & = |\frac{1}{2\pi i} \int_{\partial\Sigma_\theta} \spr{\lambda^{-\frac12}f(\lambda) A^{\frac12} (\lambda- A)^{-1}x}{x'} d\lambda|
\nonumber \\
& \leq \frac{1}{2\pi} \sum_{j = \pm 1} \int_0^\infty |\spr{f(te^{ij\theta}) (tA)^{\frac12}(e^{ij\theta}- tA)^{-1} x}{x'}|\frac{dt}{t}
\nonumber \\
& = (*).
\nonumber
\end{align}
We put
\[\phi_{j \theta}(\lambda) = \frac{\lambda^{\frac14}(1+\lambda)^{\frac12}}{e^{ij\theta}-\lambda}
\text{ and }\psi(\lambda)= \left(\frac{\lambda}{(1+\lambda)^2}\right)^{\frac18},
\]
so that $(tA)^{\frac12} (e^{ij\theta}-tA)^{-1} = \phi_{j\theta}(tA)\psi(tA)\psi(tA).$
By \cite[Lemma 12.6]{KuWe}, the integral $(*)$ can be controlled by $\Gauss$-norms.
More precisely, we have
\begin{align}
(*) & \lesssim \sup_{j=\pm 1} \sup_{t > 0} \sup_N  \| \sum_{k=-N}^N \gamma_k \otimes f(2^k t e^{ij \theta}) \phi_{j \theta}(2^k t A) \psi(2^k t A) x\|_{\Gauss(X)}
\label{Equ 2 Lem Intermediate bad Mihlin calculus} \\
& \cdot \|\sum_{k=-N}^N \gamma_k \otimes \psi(2^k t A)'x'\|_{\Gauss(X')}
\nonumber \\
& \lesssim \|f\|_{\infty,\theta}  \sup_{j,t} \gamma\left(\{ \phi_{j \theta}(2^k t A) :\: k \in \Z \}\right) \sup_{N,t} \| \sum_{k=-N}^N \gamma_k \otimes \psi(2^k t A)x\|_{\Gauss(X)}
\nonumber \\
& \cdot \sup_{N,t}\| \sum_{k = -N}^N \gamma_k \otimes \psi(2^k t A)'x'\|_{\Gauss(X')}.
\nonumber
\end{align}
By \cite[Theorem 12.2]{KuWe}, the fact that $A$ has a bounded $\HI$ calculus implies that
$\sup_{N,t} \| \sum_{k=-N}^N \gamma_k \otimes \psi(2^k t A) x\|_{\Gauss(X)} \lesssim \|x\|$ and
$\sup_{N,t} \| \sum_{k=-N}^N \gamma_k \otimes \psi(2^k t A)' x'\|_{\Gauss(X')} \lesssim \|x'\|.$
Note that there is no dependence on $\theta$ in these two inequalities.
It remains to show that
\begin{equation}\label{Equ R-bound of phi_theta}
 \sup_{j = \pm 1,t> 0}\gamma(\{ \phi_{j \theta}(2^k t A) :\: k \in \Z \}) \lesssim \theta^{-\delta}.
\end{equation}
We have
\begin{align}
 \phi_{j \theta}(2^ktA) & = \frac{1}{2\pi i} \int_{\partial\Sigma_{\frac{\theta}{2}}} \phi_{j\theta}(\lambda) \lambda^{\frac12} (2^ktA)^{\frac12} (\lambda-2^ktA)^{-1} \frac{d\lambda}{\lambda}
\nonumber \\
& = \frac{1}{2\pi i} \int_{\partial\Sigma_{\frac{\theta}{2}}} \phi_{j\theta}(t\lambda) \lambda^{\frac12} (2^kA)^{\frac12} (\lambda-2^kA)^{-1} \frac{d\lambda}{\lambda}.
\nonumber
\end{align}
By \cite[Proposition 2.6 (5)]{Kr},
\begin{align*}
\sup_{j= \pm 1,t>0}\gamma(\{\phi_{j\theta}(2^ktA):\:k \in \Z\}) & \lesssim \sup_{j = \pm 1,t > 0} \|\phi_{j \theta}(t\lambda)\|_{L^1(\partial\Sigma_{\frac{\theta}{2}},|\frac{d\lambda}{\lambda}|)} \\
& \times
\sup_{\lambda \in \partial\Sigma_{\theta/2} \backslash \{ 0 \} }\gamma\left(\{ \lambda^{\frac12} (2^kA)^{\frac12} (\lambda - 2^kA)^{-1} :\:  k \in \Z \}\right).
\end{align*}
By assumption, it suffices to show that for any $\epsilon > 0$
\[\sup_{t > 0} \|\phi_{j \theta}(t\lambda)\|_{L^1(\partial\Sigma_{\frac{\theta}{2}},|\frac{d\lambda}{\lambda}|)} \leq C_\epsilon \theta^{-\epsilon}.\]
\begin{align}
\int_{\partial\Sigma_{\frac{\theta}{2}}} |\phi_{j \theta}(t\lambda)| \left|\frac{d\lambda}{\lambda}\right|
& = \int_{\partial\Sigma_{\frac{\theta}{2}}} | \phi_{j \theta}(\lambda)| \left| \frac{d\lambda}{\lambda}\right|
= \sum_{l = \pm 1}\int_0^\infty \left|\frac{s^\frac14 (1 + e^{i l \frac{\theta}{2}} s)^{\frac12}}{e^{i j\theta} - e^{il\frac{\theta}{2}} s}\right| \frac{ds}{s}.
\nonumber
\end{align}
The denominator is estimated from below by
\begin{align}
|e^{ij \theta} - e^{il \frac{\theta}{2}} s | & = |e^{i\theta (j - \frac{l}{2})} - s|
 \gtrsim |\cos(\theta (j- \frac{l}{2})) - s| + |\sin(\theta (j-\frac{l}{2}))|
\nonumber \\
& \gtrsim |1-s| - |\cos(\theta (j-\frac{l}{2}))-1| + \theta
\nonumber \\
& \gtrsim |1-s| - \theta^2 + \theta
\gtrsim |1-s| + \theta
\nonumber
\end{align}
for the crucial case of small $\theta.$
Thus
\[ \int_{\partial\Sigma_{\frac{\theta}{2}}} |\phi_{j \theta}(\lambda)| \left|\frac{d\lambda}{\lambda}\right|
 \lesssim \int_0^\infty \frac{s^{\frac14} (1+s)^{\frac12}}{\theta + |1-s|} \, \frac{ds}{s}.
\]
We split the integral into the parts $\int_0^\infty = \int_0^{\frac12} + \int_{\frac12}^{1-\theta} + \int_{1-\theta}^{1+\theta} + \int_{1+\theta}^2 + \int_2^\infty.$
\[ \int_0^{\frac12} \frac{s^{\frac14}(1+s)^{\frac12}}{\theta + |1-s|} \, \frac{ds}{s} \leq \int_0^{\frac12} \frac{s^{\frac14}(1+s)^{\frac12}}{|1-s|} \, \frac{ds}{s} < \infty\]
is independent of $\theta.$
The same estimate applies to $\int_2^\infty.$
\[ \int_{\frac12}^{1-\theta} \frac{s^{\frac14}(1+s)^{\frac12}}{\theta + |1 - s|} \, \frac{ds}{s} \lesssim \int_{\frac12}^{1-\theta} \frac{1}{\theta + |1 -s|} ds \leq \int_{\frac12}^{1-\theta}  \frac{1}{1-s} ds \lesssim | \log \theta |.\]
Similarly,
\[ \int_{1 + \theta}^2 \frac{s^{\frac14}(1+s)^{\frac12}}{\theta + |1 - s|} \, \frac{ds}{s} \lesssim \int_{1 + \theta}^2 \frac{1}{s-1} ds \lesssim |\log \theta|.\]
Finally,
\[ \int_{1-\theta}^{1 + \theta} \frac{s^{\frac14}(1+s)^{\frac12}}{\theta + |1 - s|} \, \frac{ds}{s} \lesssim \int_{1-\theta}^{1 + \theta} \frac{1}{\theta} ds \lesssim 1.\]
Since $1 + |\log \theta| \leq C_\epsilon \theta^{-\epsilon},$ the lemma is shown.
\end{proof}

\begin{proof}[Proof of Lemma \ref{Lem 4}]
Denote $N = \sup_{x > 0} \# \{ k \in \Z: \: \supp g_k \cap [\frac12 x, 2x] \neq \emptyset \} < \infty.$
Fix $x > 0$ and $j \in \{ 0, 1 ,\ldots, n\}.$
Then almost all $g_k$ vanish in a neighborhood of $x,$ and thus
\[ \left|x^j \frac{d^j}{dx^j} \left(\sum_{k \in \Z} g_k\right)(x)\right| = \left|\sum_{k \in \Z} x^j \frac{d^j}{dx^j}g_k(x)\right| \leq N \sup_{k \in \Z} |x^j \frac{d^j}{dx^j}g_k(x)| \leq N \sup_{k \in \Z} \|g_k\|_{\M^n} .\]
Taking the supremum over $x$ and $j$ gives $\|\sum_{k \in \Z} g_k\|_{\M^n} \leq N \sup_{k \in \Z} \|g_k\|_{\M^n}.$
\end{proof}

\section*{Acknowledgment}
I would like to thank Lutz Weis for several discussions on the subject of this article.
Further, I acknowledge financial support from the Karlsruhe House of Young Scientists KHYS and the Franco-German University DFH-UFA.


\begin{thebibliography}{100}

\bibitem{Alex}
G.~Alexopoulos.
\newblock Spectral multipliers on Lie groups of polynomial growth.
\newblock {\em Proc. Am. Math. Soc.} 120(3):973--979, 1994.

\bibitem{BeL}
J.~Bergh and J.~L\"ofstr\"om.
\newblock {\em Interpolation spaces. An introduction.}
\newblock Grundlehren der mathematischen Wissenschaften, 223.
  Berlin etc.: Springer, 1976.

\bibitem{Bluna}
S.~Blunck.
\newblock A H\"ormander-type spectral multiplier theorem for operators without
  heat kernel.
\newblock {\em Ann. Sc. Norm. Sup. Pisa (5)} 2(3):449--459, 2003.

\bibitem{Blun}
S.~Blunck.
\newblock Generalized Gaussian estimates and Riesz means of Schr\"odinger
  groups.
\newblock {\em J. Aust. Math. Soc.} 82(2):149--162, 2007.

\bibitem{BlKua}
S.~Blunck and P.~Kunstmann.
\newblock Generalized Gaussian estimates and the Legendre transform.
\newblock {\em J. Oper. Theory} 53(2):351--365, 2005.

\bibitem{BlKub}
S.~Blunck and P.~C. Kunstmann.
\newblock Calder\'on-Zygmund theory for non-integral operators and the
  $H^\infty$ functional calculus.
\newblock {\em Rev. Mat. Iberoam.} 19(3):919--942, 2003.

\bibitem{Chri}
M.~Christ.
\newblock $L\sp p$ bounds for spectral multipliers on nilpotent groups.
\newblock {\em Trans. Am. Math. Soc.} 328(1):73--81, 1991.

\bibitem{CDMY}
M.~Cowling, I.~Doust, A.~McIntosh and A.~Yagi.
\newblock Banach space operators with a bounded $H^\infty$ functional
  calculus.
\newblock {\em J. Aust. Math. Soc., Ser. A} 60(1):51--89, 1996.

\bibitem{Davia}
E.~Davies.
\newblock {\em Heat kernels and spectral theory.}
\newblock Cambridge Tracts in Mathematics, 92. Cambridge etc.: Cambridge
  University Press, 1989.

\bibitem{DiJT}
J.~Diestel, H.~Jarchow and A.~Tonge.
\newblock {\em Absolutely summing operators.}
\newblock Cambridge Studies in Advanced Mathematics, 43. Cambridge: Cambridge
  Univ. Press, 1995.

\bibitem{dPR}
B.~de Pagter and W.~Ricker.
\newblock $C(K)$-representations and $R$-boundedness.
\newblock{\em J. London Math. Soc.} (2) 76:498--512, 2007.

\bibitem{Duon}
X.~T. Duong.
\newblock From the $L\sp 1$ norms of the complex heat kernels to a H\"ormander
  multiplier theorem for sub-Laplacians on nilpotent Lie groups.
\newblock {\em Pac. J. Math.} 173(2):413--424, 1996.

\bibitem{DR}
X.~Duong and D.~Robinson.
\newblock Semigroup kernels, Poisson bounds, and holomorphic functional calculus.
\newblock {\em J. Funct. Anal.} 142 No.~1:89--128, 1996.

\bibitem{DuOS}
X.~T. Duong, E.~M. Ouhabaz and A.~Sikora.
\newblock Plancherel-type estimates and sharp spectral multipliers.
\newblock {\em J. Funct. Anal.} 196(2):443--485, 2002.

\bibitem{ER}
E.~Effros and Zh.-J.~Ruan.
\newblock {\em Operator spaces.}
\newblock London Mathematical Society Monographs, New Series, 23. The Clarendon Press, Oxford University Press, New York, 2000.

\bibitem{FoSt}
G.~Folland and E.~Stein.
\newblock {\em Hardy spaces on homogeneous groups.}
\newblock Mathematical Notes, 28. Princeton, NJ: Princeton University
  Press, University of Tokyo Press, 1982.

\bibitem{Frohl}
A.~Fr{\"o}hlich.
\newblock {\em $H^\infty$-Kalk\"ul und Dilatationen}.
\newblock PhD thesis, Universit\"at Karlsruhe, 2003.

\bibitem{HaKu}
B.~Haak and P.~Kunstmann.
\newblock Admissibility of unbounded operators and wellposedness of linear systems in Banach spaces.
\newblock{\em Integral equations Oper. Theory} 55 No.~4:497--533, 2006.

\bibitem{Haa}
M.~Haase.
\newblock {\em The functional calculus for sectorial operators.}
\newblock Operator Theory: Advances and Applications, 169. Basel: Birkh\"auser, 2006.

\bibitem{Hor}
L.~H\"ormander.
\newblock Estimates for translation invariant operators in $L^p$ spaces.
\newblock{\em Acta Math.} 104:93--140, 1960.

\bibitem{KaKW}
N.~Kalton, P.~Kunstmann and L.~Weis.
\newblock Perturbation and interpolation theorems for the $H^\infty$-calculus with applications to differential operators.
\newblock{\em Math. Ann.} 336, no. 4:747--801, 2006.

\bibitem{KNVW}
N.~Kalton, J.~van Neerven, M.~Veraar and L.~Weis.
\newblock Embedding vector-valued Besov spaces into spaces of
  $\gamma$-radonifying operators.
\newblock {\em Math. Nachr.} 281(2):238--252, 2008.

\bibitem{KaW2}
N.~Kalton and L.~Weis.
\newblock The $H^\infty$-calculus and square function estimates, preprint.

\bibitem{KaW}
N.~Kalton and L.~Weis.
\newblock The $H^\infty$-calculus and sums of closed operators.
\newblock {\em Math. Ann.} 321(2):319--345, 2001.

\bibitem{Kr}
C.~Kriegler.
\newblock Spectral multipliers, $R$-bounded homomorphisms, and analytic diffusion semigroups.
\newblock {PhD-thesis, online at
http://digbib.ubka.uni-karlsruhe.de/volltexte/1000015866}

\bibitem{Kr1}
C.~Kriegler.
\newblock Functional calculus and dilation for $c_0$-groups of polynomial growth, submitted.

\bibitem{KrLM}
C.~Kriegler and C.~Le Merdy.
\newblock Tensor extension properties of $C(K)$-representations and applications to unconditionality.
\newblock{\em J.~Aust.~Math.~Soc.} 88:205--230, 2010.

\bibitem{Kuns}
P.~C. Kunstmann.
\newblock On maximal regularity of type $L^p-L^q$ under minimal assumptions for
  elliptic non-divergence operators.
\newblock {\em J. Funct. Anal.} 255(10):2732--2759, 2008.

\bibitem{KuWe}
P.~C. Kunstmann and L.~Weis.
\newblock Maximal $L_p$-regularity for parabolic equations, Fourier multiplier
  theorems and $H^\infty$-functional calculus.
\newblock {\em Functional analytic methods for
  evolution equations. Based on lectures given at the autumn school on
  evolution equations and semigroups, Levico Terme, Trento, Italy, October
  28--November 2, 2001.} Berlin: Springer, Lect. Notes Math. 1855,
  65--311, 2004.

\bibitem{LM04}
C.~Le~Merdy.
\newblock On square functions associated to sectorial operators.
\newblock{\em Bull.~Math.~Soc.~France} 132 (1):137--156, 2004.

\bibitem{LM10}
C.~Le~Merdy.
\newblock $\gamma$-bounded representations of amenable groups.
\newblock{\em Adv.~Math.} 224 No.~4:1641--1671, 2010.

\bibitem{MaMe}
G.~Mauceri and S.~Meda.
\newblock Vector-valued multipliers on stratified groups.
\newblock {\em Rev. Mat. Iberoam.} 6(3-4):141--154, 1990.

\bibitem{MSt}
D.~M\"uller and E.~Stein.
\newblock On spectral multipliers for Heisenberg and related groups.
\newblock {\em J. Math. Pures Appl. IX.} 73(4):413--440, 1994.

\bibitem{Ouha}
E.~M. Ouhabaz.
\newblock {\em Analysis of heat equations on domains.}
\newblock London Mathematical Society Monographs, 31. Princeton, NJ:
  Princeton University Press, 2005.

\bibitem{Ouhaa}
E.~M. Ouhabaz.
\newblock Sharp Gaussian bounds and $L^p$-growth of semigroups associated with
  elliptic and Schr\"odinger operators.
\newblock {\em Proc. Am. Math. Soc.} 134(12):3567--3575, 2006.

\bibitem{Paz}
A.~Pazy.
\newblock Semigroups of linear operators and applications to partial differential equations.
\newblock{\em Applied Mathematical Sciences} 44, New York: Springer Verlag, 1983.

\bibitem{Pis}
G.~Pisier.
\newblock Some results on Banach spaces without local unconditional structure.
\newblock {\em Compos. Math.} 37:3--19, 1978.

\bibitem{Pis2}
G.~Pisier.
\newblock {\em Introduction to Operator Space Theory.}
\newblock London Math. Soc. Lecture Notes Series, 294:
Cambridge University Press, Cambridge, 2003.

\bibitem{Ste1}
E.~Stein.
\newblock {\em Topics in Harmonic Analysis Related to Littlewood-Paley theory.}
\newblock Ann. Math. Stud. 63, Princeton Univ. Press, 1970.

\bibitem{Ste}
E.~Stein.
\newblock {\em Singular integrals and differentiability properties of functions.}
\newblock Princeton Mathematical Series, No. 30 Princeton University Press, 1970.

\bibitem{Str}
R.~Strichartz.
\newblock Multipliers on fractional Sobolev spaces.
\newblock {\em J. Math. and Mech.} 16, 1031--1060, 1967.

\bibitem{vN}
J.~van Neerven.
\newblock $\gamma$-radonifying operators: a survey.
\newblock {\em Proc. Centre Math. Appl. Austral. Nat. Univ.} 44:1--61, 2010.

\bibitem{Uhl}
M.~Uhl.
\newblock Spectral multiplier theorems of Hörmander type via generalized Gaussian estimates.
\newblock {PhD-thesis, online at
http://digbib.ubka.uni-karlsruhe.de/volltexte/1000025107}

\bibitem{Varo}
N.~Varopoulos.
\newblock Analysis on Lie groups.
\newblock {\em J. Funct. Anal.} 76(2):346--410, 1988.

\end{thebibliography}
\end{document}